\newcommand{\dsp}{\displaystyle}
\newcommand{\lbr}{\lbrack}
\newcommand{\rbr}{\rbrack}
\newcommand{\eps}{\varepsilon}
\newcommand{\om}{\omega}
\newcommand{\Om}{\Omega}
\newcommand{\mrm}[1]{\mathrm{#1}}
\newcommand{\bfx}{\boldsymbol{x}}
\newcommand{\bfxi}{\boldsymbol{\xi}}
\newcommand{\Cplx}{\mathbb{C}}
\newcommand{\N}{\mathbb{N}}
\newcommand{\R}{\mathbb{R}}
\newcommand{\Z}{\mathbb{Z}}
\renewcommand{\div}{\mrm{div}}
\newcommand{\mL}{\mrm{L}}
\newcommand{\mH}{\mrm{H}}
\newcommand{\mV}{\mrm{V}}
\newcommand{\mVc}{\mathcal{V}}
\newcommand{\mA}{\mrm{A}}
\newcommand{\out}{\mrm{out}}
\newcommand{\ind}{\mrm{Ind}}
\newcommand{\Ker}{\mrm{Ker}}
\newcommand{\spec}{\mathfrak{S}}
\newcommand{\sg}{\mathbf{s}}
\newcommand{\sgp}{\mathbf{s}_{+}}
\newcommand{\sgm}{\mathbf{s}_{-}}
\newcommand{\sgpm}{\mathbf{s}_{\pm}}
\newcommand{\Sgp}{\mathbf{S}_{+}}
\newcommand{\Sgm}{\mathbf{S}_{-}}
\newcommand{\Sgpm}{\mathbf{S}_{\pm}}
\newtheorem{theorem}{Theorem}[section]
\newtheorem{lemma}{Lemma}[section]
\newtheorem{remark}{Remark}[section]
\newtheorem{definition}{Definition}[section]
\newtheorem{proposition}{Proposition}[section]
\newtheorem{Assumption}{Assumption}
\renewenvironment{proof}[1][\proofname]{%
  \par\pushQED{\qed}\normalfont%
  \topsep6\p@\@plus6\p@\relax
  \trivlist\item[\hskip\labelsep\bfseries#1\@addpunct{.}]%
  \ignorespaces
}{%
  \popQED\endtrivlist\@endpefalse
}
\begin{document}

~\vspace{0.1cm}
\begin{center}
{\sc \bf\LARGE 
Oscillating behaviour of the spectrum for a plasmonic\\[4pt]
problem in a domain with a rounded corner
}
\end{center}

\begin{center}
\textsc{Lucas Chesnel}$^1$, \textsc{Xavier Claeys}$^2$, \textsc{Sergei A. Nazarov}$^{3,\,4,\,5}$\\[16pt]
\begin{minipage}{0.82\textwidth}
{\small
$^1$ INRIA/Centre de mathématiques appliquées, \'Ecole Polytechnique, Université Paris-Saclay, Route de Saclay, 91128 Palaiseau, France;\\
$^2$ Laboratory Jacques Louis Lions, University Pierre et Marie Curie, 4 place Jussieu, 75005 Paris, France; \\
$^3$ St. Petersburg State University, Universitetskaya naberezhnaya, 7-9, 199034, St. Petersburg, Russia;\\
$^4$ Peter the Great St. Petersburg Polytechnic University, Polytekhnicheskaya ul, 29, 195251, St. Petersburg, Russia;\\
$^5$ Institute of Problems of Mechanical Engineering, Bolshoy prospekt, 61, 199178, V.O., St. Petersburg, Russia.\\[10pt]
E-mails: \texttt{lucas.chesnel@inria.fr}, \texttt{claeys@ann.jussieu.fr}, \texttt{srgnazarov@yahoo.co.uk}\\[-14pt]
\begin{center}
(\today)
\end{center}
}
\end{minipage}
\end{center}
\vspace{0.4cm}

\noindent\textbf{Abstract.} 
We investigate the eigenvalue problem $-\div(\sigma \nabla u) = \lambda u\ (\mathscr{P})$ in a 2D domain $\Om$ divided into two regions $\Om_{\pm}$. We are interested in situations where $\sigma$ takes positive values on $\Om_{+}$ and negative ones on $\Om_{-}$. Such problems appear in time harmonic electromagnetics in the modeling of plasmonic technologies. In a recent work \cite{ChCNSu}, we highlighted an unusual instability phenomenon for the source term problem associated with $(\mathscr{P})$: for certain configurations, when the interface between the subdomains $\Om_{\pm}$ presents a rounded corner, the solution may depend critically on the value of the rounding parameter. In the present article, we explain this property studying the eigenvalue problem $(\mathscr{P})$. We provide an asymptotic expansion of the eigenvalues and prove error estimates. We establish an oscillatory behaviour of the eigenvalues as the rounding parameter of the corner tends to zero. We end the paper illustrating this phenomenon with numerical experiments.\\

\noindent\textbf{Key words.} Negative materials, corner,  asymptotic analysis, plasmonic, metamaterial, sign-changing coefficients. 

\section{Introduction}
In electromagnetics, it is well-known that the dielectric permittivity $\eps$ of metals has a negative real part at optical wavelength. Because of this property, some waves called \textit{surface plasmon polaritons} can propagate at the interface between a metal and a classical dielectric \cite{BaDE03,ZaSM05}. Physicists seek to use the plasmons in order to propagate information and plasmonic technologies appear a promising solution for the miniaturization of electronic devices. In this context, an important issue is to focus energy in some confined regions of space. To achieve this, one approach consists in using metallic structures with sharp geometries involving corners, tips, edges, ... \cite{Stoc04,BVNMRB08}. 

When losses are neglected, which is often desired for applications and which is reasonable to assume for certain ranges of frequencies, the physical parameters in devices involving negative materials change sign in the domain of interest. In this case, the study of time harmonic Maxwell's equations can not be handled using the classical methods \cite{OlRa08,FeRa09}. New techniques have to be developed \cite{KhPS07,PePu12,BoTr13,Grie14}. Using a variational approach, it has been proved in \cite{BoCZ10,BoCC12} that the scalar problem equivalent to Maxwell's equations in 2D configurations, turns out to be of Fredholm type in the classical functional framework only whenever the contrast (ratio of the values
of $\eps$ across the interface) lies outside some interval, which always contains the value $-1$. Moreover, this interval reduces to $\{-1\}$ if and if only the interface between the positive material and the negative material is smooth (of class $\mathscr{C}^1$). Analogous results have been obtained by techniques of boundary integral equations in \cite{CoSt85} long before the age of plasmonic technologies. The numerical approximation of the solution of this scalar problem, based on classical finite element methods, has been investigated in \cite{BoCZ10,NiVe11,ChCiAc}. Under some assumptions on the meshes, the discretized problem is well-posed and its solution converges to the solution of the continuous problem. The study of Maxwell's equations has been carried out in \cite{BoCCSud}. The influence of corners of the interface, studied in \cite{Meix72,SuSK06,WaKS08,HePe12}, has been clarified in \cite{BoCCSuc,BCCC15} for the scalar problem (see also the previous works \cite{DaTe97,BoDR99,Ramd99} where the general theory \cite{Kond67,MaPl77,NaPl94,KoMR97} is extended to this configuration where the operator is not strongly elliptic). In \cite{BoCCSuc}, following \cite{NaPl94,NaTa08,BaNa09,NaTa11}, the authors prove that when the contrast of the physical parameters lies inside the critical interval, Fredholm property is lost because of the existence of two strongly oscillating singularities at the corner. In such 
a case, Fredholmness can be recovered by adding to the functional framework one of the two singularities, selected by means of a limiting absorption principle, and by working in a special weighted Sobolev setting with weight centered at the corner \cite{BoCC13}. This functional framework amounts to prescribing a radiation condition at the corner.

Such a special functional framework seems an uncomfortable situation though, at least from a physical point of view. Indeed, it leads to working with solutions 
which are not of finite energy (their $\mH^1$-norm is infinite). A possible regularization that may appear natural would consist in considering slightly rounded 
corners, instead of real corners at the interface. In the sequel, we will denote $\delta>0$ the (small) parameter corresponding to the rounding of the corner. In 
a recent work \cite{ChCNSu}, we prove an instability phenomenon for the source term problem set in such a geometry: when the contrast of the physical parameters 
belongs to the critical interval, the solution depends critically on the value of $\delta$ and does not converge, even for very weak norms, as $\delta$ tends to 
zero. In the present article, our goal is to study the properties of the associated eigenvalue problem.

We use asymptotic analysis to carry out this study. We do not derive a complete asymptotic expansion of the eigenvalues though. 
Asymptotic techniques here only stand as an intermediate (yet crucial) tool for the description of the predominant behaviour of the boundary value operator. 
Our analysis leads to the conclusion that this operator and the corresponding eigenvalues asymptotically behave, as $\delta\to 0$, 
as an operator that admits a non-constant periodic dependency with respect to $\ln\delta$.

The outline of this paper is as follows. In Section \ref{Description of the problem}, we describe 
in detail the problem and the geometry that we want to consider, namely an eigenvalue problem for a diffusion equation with a sign-changing 
coefficient in the principal part. The domain is a bounded cavity divided into two regions by an interface containing a rounded corner close to the boundary. 
As above, the rounding of the corner is described by some small parameter $\delta$ ($\delta=0$ corresponds to the geometry with a ``perfect'' corner in the interface). 
In Section \ref{LimitProblem}, we study the spectral properties of $\mA^0$, the natural limit operator for $\delta=0$. More precisely, we recall some results of 
\cite{BoDR99,Ramd99} which indicate that $\mA^0$ is not self-adjoint. Using Kondratiev's theory \cite{Kond67}, and more precisely, the results established in \cite{BoCC13}, 
we then describe all the self-adjoint extensions of $\mA^0$. In Section \ref{FormalAsymp}, we propose a formal asymptotic expansion of the eigenpairs of $\mA^{\delta}$, 
the natural operator set in the geometry with a slightly rounded corner. This expansion is built using matched asymptotics \cite{MaNP81}, \cite[Chap.\,4,\,5]{MaNP00}. 
In particular, in accordance with \cite{KaNa00,Naza99}, we find that the eigenpairs of $\mA^{\delta}$ behave asymptotically as the eigenpairs of some self-adjoint extension 
of $\mA^0$. The originality lies in the fact that the latter self-adjoint extension depends periodically of $-\ln\delta$. In Section \ref{SectionMainThm}, we prove the main 
result of the paper, namely Theorem \ref{thmMajor}. We establish that, asymptotically, all the eigenvalues of $\mA^{\delta}$ are periodic in $\ln\delta$-scale as $\delta$ tends 
to zero. Section \ref{SectionSourceTermpb} is devoted to showing an important intermediate proposition allowing to justify the formal asymptotic analysis and to provide error 
estimates. We conclude the paper with numerical experiments illustrating the results we obtained in the previous section.

\section{Description of the problem}\label{Description of the problem}

Let $\Omega\subset \R^{2}$ be a domain, \textit{i.e.} a bounded and connected open subset of $\R^{2}$, with Lipschitz boundary $\partial\Om$ (see Figure \ref{GeneralGeom} below). We assume that 
$\Omega$ is partitioned into two sub-domains $\Omega_{\pm}^{\delta}$ so that 
$\overline{\Omega} = \overline{\Omega_{+}^{\delta}}\cup \overline{\Omega_{-}^{\delta}}$ with 
$\Omega_{+}^{\delta}\cap \Omega_{-}^{\delta} = \emptyset$. We consider a smooth curve $\Sigma^{0}$ that intersects $\partial\Omega$ at only two points $O$ and $O'$. We assume that $\partial\Om$ and $\Sigma^{0}$ are straight in a neighbourhood of $O$, $O'$, and that at $O'$, $\Sigma^{0}$ is perpendicular to $\partial\Om$. We also assume that the interface $\Sigma^{\delta} := 
\overline{\Omega_{+}^{\delta}}\cap \overline{\Omega_{-}^{\delta}}$ coincides 
with $\Sigma^{0}$ outside the disk $\mrm{D}(O,\delta)$. We denote $\mathscr{S}:=O\cup O'$ and we introduce $\boldsymbol{n}^{\delta}$ the unit outward normal vector to $\Sigma^{\delta}$ directed from $\Omega_{+}^{\delta}$ to $\Omega_{-}^{\delta}$.

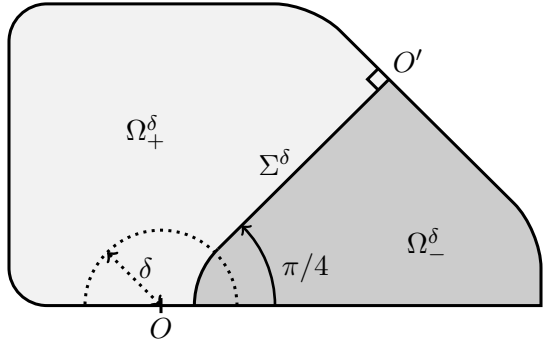
\begin{figure}[!ht]
\centering
\begin{tikzpicture}
\draw[draw=none, line width = 0.4mm,fill=gray!10,rounded corners=5mm] (-4,1)--(3,1)--(3,2)--(0,5)--(-4,5)--cycle;
\fill[draw=none, line width = 0.4mm,fill=gray!40,rounded corners=5mm] [sharp corners](-1.25,1.75) arc (135:180:1.05) -- (3,1) [rounded corners=5mm]-- (3,2) [sharp corners]-- (1,4) -- cycle ;
\draw[line width = 0.4mm] [rounded corners=5mm](-4,1)[sharp corners]--(3,1)[rounded corners=5mm]--(3,2)--(0,5)--(-4,5)--cycle;
\draw[black, line width = 0.4mm] (-1.25,1.75)--(1,4);
\draw[black, line width = 0.4mm] (-1.25,1.75) arc (135:180:1.060660172);
\draw[black, line width = 0.4mm,<->,dotted] (-2,1)--(-2.707106,1.707106);
\node at (-2.2,1.5){$\delta$};
\draw[black, line width = 0.4mm,dotted] (-1,1) arc (0:180:1);
\node at (1.5,1.8){$\Omega_{-}^{\delta}$};
\node at (-2.2,3.3){$\Omega_{+}^{\delta}$};
\node at (-0.5,2.9){$\Sigma^{\delta}$};
\draw[black, line width =1pt,->] (-0.5,1) arc (0:45:1.5);
\node at (-0.1,1.5){$\pi/4$};
\draw[black, line width = 0.4mm](0.8586,4.1414)--(0.7172,4)--(0.8586,3.8586);
\node at (-2,0.7){$O$};
\draw[black, line width = 0.4mm](-2,0.9)--(-2,1.1);
\node at (1.25,4.25){$O'$};
\end{tikzpicture}
\caption{Geometry of the problem.\label{GeneralGeom}}
\end{figure}

\noindent In the sequel, we shall denote by $(r,\theta)$ the polar 
coordinates centered at $O$ such that $\theta=0$ or $\pi$ at the boundary in a neighbourhood of $O$. As $\delta\to 0$, 
the sub-domains $\Omega_{\pm}^{\delta}$ turn into $\Omega_{\pm}^{0}$ and we assume that there exists a disk $\mrm{D}(O,2r_{0})$ 
centered at $O$ such that $\Omega_{-}^{0}\cap \mrm{D}(O,2r_{0}) =  \{ (r\cos\theta, r\sin\theta)\in\R^{2}\;\vert\; 0<r<2r_{0}\;,\; 
0<\theta<\pi/4 \}$ and  $\Omega_{+}^{0}\cap \mrm{D}(O,2r_{0}) 
=  \{ (r\cos\theta, r\sin\theta)\in\R^{2}\;\vert\; 0<r<2r_{0}\;,\; \pi/4<\theta<
\pi \}$. We consider the value $\pi/4$
for the aperture of the corner for a reason which appears in \S \ref{StudySing} (the calculus of $\Lambda$ in (\ref{SingExp}) can be made explicit in this case). However, there is no difficulty to adapt the rest of the forthcoming analysis for other values of this angle (see the discussion in Remark \ref{RmkAngleQconque}). To fix ideas, and without restriction, we assume 
that we can take $r_{0}=1$, \textit{i.e.} we assume that there holds $(\mrm{D}(O,2)\cap \R\times\R^{\ast}_{+})\subset\Om$, where  $\R\times\R^{\ast}_{+}=\{(x,y)\in\R^2\,|\,y>0\}$.\\
\newline
With this geometry, we associate a set of cut-off functions which we will refer to throughout this paper. We introduce  $\psi\in\mathscr{C}^{\infty}(\R,[0;1])$ such that $\psi(r)=1$ for $0\le r \le 1$ and $\psi(r)=0$ for $r\ge 2$. We define $\chi:=1-\psi$. Finally, for $t>0$, we denote $\psi_t$, $\chi_t$ the functions  such that $\psi_t(r)=\psi(r/t)$, $\chi_t(r)=\chi(r/t)$ (see Figure \ref{cut-off functions}).

\begin{figure}[!ht]
\centering\includegraphics{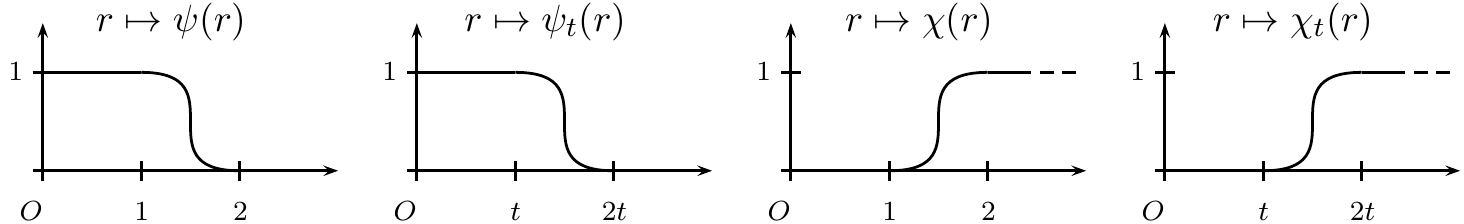}\vspace{-0.2cm}
\caption{Cut-off functions.\label{cut-off functions}}
\end{figure}

\subsection{Geometry of the rounded corner}\label{paragraph geom} The set $\Sigma^{\delta}\cap
\mrm{D}(O,\delta)$ will be defined as follows. Let $ \Xi := \R\times\R^{\ast}_{+}$ 
refer to the upper half plane partitioned by means of two open sets  
$\Xi_{\pm}$ such that $ \overline{\Xi} = \overline{\Xi_{+}}\cup\overline{\Xi_{-}}$ and $\Xi_{+}\cap\Xi_{-}=\emptyset$.
We assume that $\Gamma := \overline{\Xi_{+}}\cap\overline{\Xi_{-}}$ 
is a curve $\Gamma = \{\varphi_{\Gamma}(t),\;t\in\lbr 0;+\infty)\}$
where $\varphi_{\Gamma}$ is a $\mathscr{C}^{\infty}$ function such that 
$\partial_{t}\varphi_{\Gamma}(0)$ is orthogonal to the $x$-axis
and $\varphi_{\Gamma}(t) = (t,t)$ for $t\geq 1$, see Figure \ref{Frozen geometry} below.
In a neighbourhood of the corner,  we assume that 
$\Omega_{\pm}^{\delta}$ can be defined from $\Xi_{\pm}$ by 
self similarity: 
\[
\Omega_{\pm}^{\delta}\cap\mrm{D}(O,\delta) = \{\;\bfx\in \R^{2}\;\vert\;
\bfx/\delta \in \Xi_{\pm}\cap\mrm{D}(O,1)\;\}.
\]

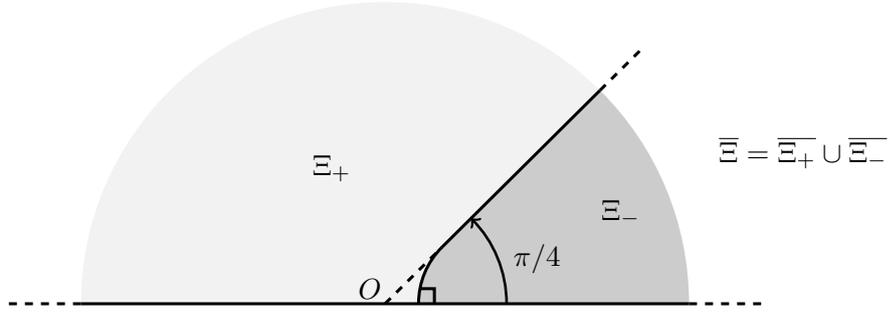
\begin{figure}[!ht]
\centering
\begin{tikzpicture}
\fill[draw=none, line width = 0.4mm,fill=gray!10] (-2,1)--(2,1) arc (0:180:4);
\fill[draw=none, line width = 0.4mm,fill=gray!40] (2,1) arc (0:45:4)-- (-1.25,1.75) arc (135:180:1.05) -- cycle;
\draw[black,line width = 0.4mm,dashed](-6,1)--(-7,1);
\draw[black,line width = 0.4mm,dashed](2,1)--(3,1);
\draw[black,line width = 0.4mm](-6,1)--(2,1);
\draw[black,line width = 0.4mm,dashed](-2,1)--(-1.25,1.75);
\draw[black,line width = 0.4mm](-1.25,1.75)--(0.8284,3.8284);
\draw[black,line width = 0.4mm,dashed](0.8284,3.8284)--(1.4,4.4);
\draw[black,line width = 0.4mm] (-1.25,1.75) arc (135:180:1.05);
\draw[black,line width = 0.4mm](-1.55,1.2)--(-1.35,1.2)--(-1.35,1);
\node at (1.1,2.2){$\Xi_{-}$};
\node at (-2.7,2.8){$\Xi_{+}$};
\node at (3.5,3){$ \overline{\Xi} = \overline{\Xi_{+}}\cup \overline{\Xi_{-}}$};
\node at (-7.5,3){\phantom{$ \overline{\Xi} = \overline{\Xi_{+}}\cup \overline{\Xi_{-}}$}};
\draw[black, line width =1pt,->] (-0.4,1) arc (0:45:1.6);
\node at (0,1.6){$\pi/4$};
\node at (-2.2,1.2){$O$};
\end{tikzpicture}
\caption{Frozen geometry. \label{Frozen geometry}}
\end{figure}

\subsection{The problem under study}\label{The problem under study}
First of all, let us set basic notations. In the sequel, for any open subset 
$\om\subset \R^{d}$ with $d=1,2$, the space  $\mrm{L}^{2}(\om)$ will refer to the Lebesgue 
space of square integrable functions equipped with the scalar product 
$(u,v)_{\mL^{2}(\omega)}:= \int_{\omega}u\, \overline{v}\, d\bfx$.
We denote $\Vert v\Vert_{\mrm{L}^{2}(\omega)}:=\sqrt{(v,v)_{\omega}}$.
We will consider the Sobolev space $\mrm{H}^{1}(\omega) := \{v\in\mrm{L}^{2}(\omega)\;\vert\;
 \nabla v\in \mrm{L}^{2}(\omega)\}$, and define $\mrm{H}^{1}_{0}(\omega) :=\{ v\in 
\mrm{H}^{1}(\omega)\;\vert\;v\vert_{\partial\omega} = 0\}$ equipped with  
\[
(u,v)_{\mH^{1}_{0}(\om)}\;:=\;\int_{\om}\nabla u\cdot\nabla \overline{v}\;d\bfx\;,
\quad\quad \Vert u \Vert_{\mrm{H}^{1}_{0}(\om)} \displaystyle:= \Vert \nabla u \Vert_{\mL^{2}(\om)}.
\]

\noindent 
The present article will focus on a transmission problem with a sign-changing
coefficient. Define the function $\sigma^{\delta}:\Omega\to \R$ such that 
$\sigma^{\delta} = \sigma_{\pm}$ in $\Omega_{\pm}^{\delta}$, where $\sigma_{+}>0$ and 
$\sigma_{-}<0$ are constants. We are interested in the eigenvalue problem
\begin{equation}\label{ExPb}
\begin{array}{|l}
\dsp{ \textrm{Find}\;(\lambda^{\delta},u^{\delta})\in\Cplx\times\mH^{1}_{0}(\Omega)\setminus\{0\}\mbox{ such that}\quad}\\[6pt]
\dsp{ -\div(\sigma^{\delta}\nabla u^{\delta}) = \lambda^{\delta}u^{\delta}\quad\mbox{ in }\Omega.}
\end{array}
\end{equation}
This problem also writes
\begin{equation}\label{pb trans form}
\begin{array}{|rcll}
\multicolumn{4}{|l}{\dsp{ \textrm{Find}\;(\lambda^{\delta},u_+^{\delta},u_-^{\delta})\in\Cplx\times\mH^{1}(\Omega^{\delta}_+)\times\mH^{1}(\Omega^{\delta}_-),\mbox{ with }(u_+^{\delta},u_-^{\delta})\ne(0,0), \mbox{ such that}\quad}}\\[6pt]
- \sigma^{\delta}_{\pm}\Delta u_{\pm}^{\delta} & = &  \lambda^{\delta}u^{\delta} & \quad\textrm{in}\;\;\Omega^{\delta}_{\pm}\\[4pt]
u_+^{\delta}-u_-^{\delta} & = & 0 & \quad\textrm{on}\;\;\Sigma^{\delta}\setminus\mathscr{S}\\[4pt]
\sigma^{\delta}_+\partial_{\boldsymbol{n}} u_+^{\delta}-\sigma^{\delta}_-\partial_{\boldsymbol{n}}u_-^{\delta} & = & 0 & \quad\textrm{on}\;\;\Sigma^{\delta}\setminus\mathscr{S}\\[4pt]
u_{\pm}^{\delta}  & = & 0 & \quad\textrm{on}\;\;\partial\Om^{\delta}_{\pm}\cap\partial\Om.
\end{array}
\end{equation}
As usual, the problem above can be reformulated in terms of operators. Consider
the unbounded operator $\mA^{\delta}:D(\mA^{\delta})\to \mL^{2}(\Omega)$ defined by 
\begin{equation}\label{ExOp}
\begin{array}{|l}
\mA^{\delta}\, v \;=\; -\mrm{div}(\sigma^{\delta}\nabla v)\\[6pt]
D(\mA^{\delta})\;:=\{v\in\mH^{1}_{0}(\Omega)\;\vert\; \mrm{div}(\sigma^{\delta}\nabla v)\in\mL^{2}(\Omega)\}.
\end{array}
\end{equation}
On $\Sigma^{\delta}\setminus\mathscr{S}$, the elements of $D(\mA^{\delta})$ satisfy the transmission 
conditions of (\ref{pb trans form}). Since $\Sigma^{\delta}$ is smooth and intersect $\partial\Omega$ with right angles, we have the following proposition (see \cite[Thm.\,1]{BoDR99} and \cite{CoSt85,DaTe97,Ramd99}).
\begin{proposition}
Assume that the contrast $\kappa_{\sigma} = \sigma_{-}/\sigma_{+}$ satisfies $\kappa_{\sigma}\neq -1$. 
Then for any $\delta>0$, the operator $\mA^{\delta}$ is densely defined, closed, self-adjoint 
and admits compact resolvent.
\end{proposition}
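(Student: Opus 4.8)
The plan is to obtain all four properties at once from the $T$-coercivity of the continuous Hermitian sesquilinear form attached to $\mA^{\delta}$. First I would set
\[
a^{\delta}(u,v):=\int_{\Om}\sigma^{\delta}\,\nabla u\cdot\nabla\overline{v}\;d\bfx ,\qquad u,v\in V:=\mH^{1}_{0}(\Om),
\]
noting that $a^{\delta}$ is continuous and Hermitian because $\sigma^{\delta}$ is real-valued. The decisive point is then the following: for $\delta>0$ the interface $\Sigma^{\delta}$ is a $\mathscr{C}^{\infty}$ curve meeting $\partial\Om$ at right angles at the two points of $\mathscr{S}$, so, since in addition $\kappa_{\sigma}\neq-1$, the standard theory of sign-changing transmission problems across a smooth interface (see \cite{BoCZ10}, and the earlier \cite{CoSt85,DaTe97,BoDR99,Ramd99}) provides a bounded isomorphism $T^{\delta}\colon V\to V$ and a constant $\alpha_{\delta}>0$ with $|a^{\delta}(u,T^{\delta}u)|\geq\alpha_{\delta}\,\Vert u\Vert_{V}^{2}$ for every $u\in V$. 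I expect this estimate --- equivalently, the explicit construction of $T^{\delta}$, which is exactly what breaks down for $\delta=0$ because of the $\pi/4$ corner at $O$ --- to be the only genuinely delicate ingredient; for $\delta>0$ it is entirely covered by the cited references, so I would simply invoke it.

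Granting $T$-coercivity, the generalised Lax--Milgram lemma gives that $\mathbb{A}^{\delta}\colon V\to V'$, $\langle\mathbb{A}^{\delta}u,v\rangle=a^{\delta}(u,v)$, is an isomorphism. I would next use Green's formula on $\Om^{\delta}_{+}$ and on $\Om^{\delta}_{-}$ together with the transmission conditions satisfied by the elements of $D(\mA^{\delta})$ (recalled just before the statement) to identify $D(\mA^{\delta})$ with $\{u\in V\,:\,\mathbb{A}^{\delta}u\in\mL^{2}(\Om)\}$ and $\mA^{\delta}$ with the restriction of $\mathbb{A}^{\delta}$ to that set; this makes $\mA^{\delta}\colon D(\mA^{\delta})\to\mL^{2}(\Om)$ a bijection whose inverse is bounded from $\mL^{2}(\Om)$ into $V$. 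Everything else follows cheaply: $0$ lies in the resolvent set, and composing $(\mA^{\delta})^{-1}\colon\mL^{2}(\Om)\to V$ with the compact Rellich embedding $V\hookrightarrow\mL^{2}(\Om)$ shows that the resolvent of $\mA^{\delta}$ is compact; $(\mA^{\delta})^{-1}$ is bounded and everywhere defined, hence closed, and therefore $\mA^{\delta}$ is closed; and $D(\mA^{\delta})$ is dense because it already contains $\mathscr{C}^{\infty}_{c}(\Om^{\delta}_{+})\oplus\mathscr{C}^{\infty}_{c}(\Om^{\delta}_{-})$ (such functions vanish near $\Sigma^{\delta}$ and $\partial\Om$, so they satisfy all the transmission and boundary conditions and have an $\mL^{2}$ Laplacian), which is dense in $\mL^{2}(\Om^{\delta}_{+})\oplus\mL^{2}(\Om^{\delta}_{-})=\mL^{2}(\Om)$.

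For self-adjointness I would again invoke the same Green's formula: it yields $(\mA^{\delta}u,v)_{\mL^{2}(\Om)}=a^{\delta}(u,v)$ for all $u\in D(\mA^{\delta})$ and $v\in V$ (the normal traces being taken in $\mH^{-1/2}$, which is legitimate since $\sigma^{\delta}\nabla u\in\mH(\div)$ on each subdomain), so $\mA^{\delta}$ is symmetric because $a^{\delta}$ is Hermitian. I would conclude with the elementary fact that a densely defined symmetric operator which is bijective with bounded inverse is automatically self-adjoint: its inverse $(\mA^{\delta})^{-1}$ is then a bounded, everywhere defined, symmetric operator, hence self-adjoint, and so $\mA^{\delta}=((\mA^{\delta})^{-1})^{-1}$ is self-adjoint as well (this incidentally re-proves closedness). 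In short, the only substantive step is the $T$-coercivity estimate; the four assertions are then routine consequences, and I would put all the difficulty on that estimate.
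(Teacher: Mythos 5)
Your attempt has a genuine gap, and it lies exactly in the step you describe as the only delicate one. For $\kappa_{\sigma}\neq-1$ and a smooth interface meeting $\partial\Om$ orthogonally, the references you invoke do \emph{not} provide the strict estimate $|a^{\delta}(u,T^{\delta}u)|\geq\alpha_{\delta}\Vert u\Vert_{V}^{2}$ for all $u\in V$; they provide T-coercivity only up to a compact perturbation, i.e.\ that $\mathfrak{L}^{\delta}:\mH^{1}_{0}(\Om)\to\mH^{-1}(\Om)$ is Fredholm of index $0$ with a possibly non-trivial kernel (the paper states this explicitly right after (\ref{DefContOp})). A strict T-coercivity estimate would force $0\notin\spec(\mA^{\delta})$ for every $\delta>0$ and every admissible contrast, which is false: $0$ can be an eigenvalue of $\mA^{\delta}$ even with a smooth interface meeting the boundary at right angles (in the separable geometry of Section \ref{Numerics}, $\mA^{\delta}$ fails to be injective precisely for the sequence $\delta=\delta^{n}$ of (\ref{sequence delta n})), and this lack of invertibility is the very reason the paper later works with $\mA^{\delta}+i\mrm{Id}$ rather than $\mA^{\delta}$. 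Consequently your conclusions drawn from a bounded inverse $(\mA^{\delta})^{-1}$ --- $0$ in the resolvent set, closedness via the inverse, compact resolvent, and self-adjointness via self-adjointness of the inverse --- break down for those values of $\delta$. (For what it is worth, the paper itself gives no proof of this proposition; it cites \cite{BoDR99,CoSt85,DaTe97,Ramd99}.)

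The repair is standard and keeps most of your structure. Retain the Fredholm-of-index-zero property of $\mathfrak{L}^{\delta}$, your Green's-formula identification of $D(\mA^{\delta})$, the symmetry of $\mA^{\delta}$, and your density argument (which is fine). Then, instead of invertibility at $0$, prove surjectivity of $\mA^{\delta}\pm i\,\mrm{Id}$: the form $(u,v)\mapsto a^{\delta}(u,v)\pm i\,(u,v)_{\mL^{2}(\Om)}$ induces a compact perturbation of $\mathfrak{L}^{\delta}$, hence again a Fredholm operator of index $0$ from $\mH^{1}_{0}(\Om)$ to $\mH^{-1}(\Om)$, and it is injective because $a^{\delta}(u,u)$ is real, so $a^{\delta}(u,u)\pm i\Vert u\Vert_{\mL^{2}(\Om)}^{2}=0$ gives $u=0$ upon taking imaginary parts. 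Thus $\mA^{\delta}\pm i\,\mrm{Id}$ are onto $\mL^{2}(\Om)$, and the basic criterion for self-adjointness of a densely defined symmetric operator applies, yielding self-adjointness and hence closedness; compactness of the resolvent then follows, as in your last step, by composing the bounded map $(\mA^{\delta}+i\,\mrm{Id})^{-1}:\mL^{2}(\Om)\to\mH^{1}_{0}(\Om)$ with the Rellich embedding.
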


\noindent 
The previous result allows to study, for a fixed $\delta>0$, the spectrum of $\mA^{\delta}$. That 
this spectrum is not semi-bounded is a striking and challenging feature that will make the analysis 
more involved in the remaining of the present article.

\begin{proposition}\label{propoSptDesc}
Assume that the contrast $\kappa_{\sigma}$ satisfies $\kappa_{\sigma}\neq -1$. Then the spectrum of $\mA^{\delta}$, denoted  $\mathfrak{S}(\mA^{\delta})$, consists of two sequences, one nonnegative and one negative, of real eigenvalues of finite multiplicity: 
\[
\dots \lambda_{-m}^{\delta}\leq \dots\leq 
\lambda_{-1}^{\delta}< 0 \leq \lambda_{0}^{\delta}\leq \lambda_{1}^{\delta}\leq \dots 
\leq \lambda_{m}^{\delta} \dots\ .
\]
Moreover, there hold $\ \inf \mathfrak{S}(\mA^{\delta}) = -\infty\ $ and $\ \sup \mathfrak{S}(\mA^{\delta}) = +\infty$.
\end{proposition}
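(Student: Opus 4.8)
The plan is to exploit the self-adjointness and compact resolvent of $\mA^{\delta}$ (from the previous proposition) together with a decomposition of the sesquilinear form into positive and negative parts to control the asymptotics of the two sequences of eigenvalues. Since $\mA^{\delta}$ is self-adjoint with compact resolvent, its spectrum is a discrete set of real eigenvalues of finite multiplicity accumulating only at $\pm\infty$; the only thing to establish is that \emph{both} $+\infty$ and $-\infty$ are points of accumulation, i.e. that there are infinitely many eigenvalues of each sign. The quantity to use is the quadratic form $a^{\delta}(v,v) = \int_{\Om}\sigma^{\delta}\nabla v\cdot\overline{\nabla v}\,d\bfx = \sigma_{+}\Vert\nabla v\Vert^2_{\mL^2(\Om_+^{\delta})} + \sigma_{-}\Vert\nabla v\Vert^2_{\mL^2(\Om_-^{\delta})}$, together with the min-max (Courant--Fischer) characterization, which is available in the form valid for self-adjoint operators with spectrum unbounded in both directions.

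The first step is to produce infinitely many positive eigenvalues. I would restrict attention to functions supported in $\overline{\Om_+^{\delta}}$: for $v\in\mH^1_0(\Om_+^{\delta})$, extended by zero, one has $v\in\mH^1_0(\Om)$ and $a^{\delta}(v,v) = \sigma_+\Vert\nabla v\Vert^2_{\mL^2(\Om)}\geq 0$. On this infinite-dimensional subspace the form is nonnegative and coercive (relative to the $\mL^2$ norm, by Poincaré on $\Om_+^{\delta}$), so the usual min-max over subspaces of $\mH^1_0(\Om_+^{\delta})$ produces an infinite nondecreasing sequence of values tending to $+\infty$; by the min-max principle these are upper bounds for a corresponding sequence of eigenvalues of $\mA^{\delta}$ counted from below among the nonnegative ones, forcing $\sup\spec(\mA^{\delta}) = +\infty$ and nonnegativity of infinitely many eigenvalues. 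Symmetrically, for the second step I would test with functions $v\in\mH^1_0(\Om_-^{\delta})$ extended by zero: then $a^{\delta}(v,v) = \sigma_-\Vert\nabla v\Vert^2_{\mL^2(\Om)}\leq 0$, and since $\sigma_-<0$ while $\Vert\nabla v\Vert^2/\Vert v\Vert^2$ runs over the (unbounded above) Dirichlet eigenvalues of $-\Delta$ on $\Om_-^{\delta}$, the Rayleigh quotients $a^{\delta}(v,v)/\Vert v\Vert^2_{\mL^2(\Om)}$ form a sequence tending to $-\infty$ on an infinite-dimensional space; the min-max from above then gives infinitely many eigenvalues tending to $-\infty$, hence $\inf\spec(\mA^{\delta}) = -\infty$. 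Combining the two halves and relabelling the discrete spectrum as $\dots\leq\lambda_{-1}^{\delta}<0\leq\lambda_0^{\delta}\leq\dots$ yields the claimed structure; the separation at $0$ (nonnegative versus strictly negative branch) is just a labelling convention and uses that $0$, if an eigenvalue, has finite multiplicity.

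The main delicate point is the rigorous use of the two-sided min-max principle: because $\mA^{\delta}$ is not semi-bounded, one cannot invoke the standard Courant--Fischer statement directly, and one must instead argue via the spectral projections $E^{\delta}$ of $\mA^{\delta}$ — e.g. show that $E^{\delta}((-\infty,-N))$ has infinite rank for every $N$ by exhibiting, for each $N$, a finite-codimension subspace of the test space on which the form is $\leq -N\Vert\cdot\Vert^2$, and dually for $E^{\delta}((N,+\infty))$. The compact resolvent guarantees each $E^{\delta}(I)$ for bounded $I$ has finite rank, so accumulation can only occur at $\pm\infty$; the test-function constructions above then show it \emph{does} occur there. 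Everything else — density of the domain, discreteness, reality, finite multiplicity — is inherited verbatim from the preceding proposition, so no further work is needed on those points.
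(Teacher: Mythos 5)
Your overall strategy is the same as the paper's: discreteness, reality and finite multiplicity come for free from self-adjointness and compact resolvent, and unboundedness of the spectrum in both directions is obtained by testing the quadratic form with functions localized in $\Om_+^{\delta}$ (to push the Rayleigh quotient to $+\infty$) and in $\Om_-^{\delta}$ (to push it to $-\infty$). The paper does exactly this, but more economically: it invokes the characterization $\inf\spec(\mA^{\delta})=\inf_{v\in D(\mA^{\delta}),\,\|v\|=1}(\mA^{\delta}v,v)_{\mL^{2}(\Om)}$ (valid for any self-adjoint operator, \cite[Cor.\,4.1.5]{BiSo87}) and exhibits concentrating smooth bumps $\xi_m(\bfx)=m\,\xi(m(\bfx-\bfx_0))$ with support in $\Om_-^{\delta}$ (resp. $\Om_+^{\delta}$), so no min--max, no spectral projections and no ``infinite rank'' statements are needed; unboundedness in both directions plus discreteness already forces the two infinite sequences.

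Two points in your write-up need repair. First, in the positive-side argument the sentence ``these are upper bounds for a corresponding sequence of eigenvalues \dots forcing $\sup\spec(\mA^{\delta})=+\infty$'' is backwards: upper bounds obtained by restricting a min--max to a test subspace can never force the spectrum to be unbounded above. What is needed (and what your last paragraph correctly proposes via the projections $E^{\delta}((N,+\infty))$) is a \emph{lower}-bound/Glazman-type statement: a vector $v$ in the domain with $(\mA^{\delta}v,v)\geq N\|v\|^{2}_{\mL^{2}(\Om)}$ guarantees $\spec(\mA^{\delta})\cap[N,+\infty)\neq\emptyset$. Second, your test functions are not admissible as written: an element of $\mH^1_0(\Om_-^{\delta})$ (e.g.\ a Dirichlet eigenfunction of $-\Delta$) extended by zero lies in $\mH^1_0(\Om)$ but in general \emph{not} in $D(\mA^{\delta})$, because the extension creates a jump of $\sigma^{\delta}\partial_{\bfn}v$ across $\Sigma^{\delta}$, so $\div(\sigma^{\delta}\nabla v)\notin\mL^{2}(\Om)$; and since $\mA^{\delta}$ is not semibounded there is no off-the-shelf form domain on which the variational characterizations could be applied instead. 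The fix is exactly the paper's choice: work with $\mathscr{C}^{\infty}_0(\Om_{\pm}^{\delta})$ functions (concentrating or oscillating bumps), which do belong to $D(\mA^{\delta})$ because $\sigma^{\delta}$ is constant near their supports, and for which the Rayleigh quotients $\sigma_{\pm}\|\nabla v\|^2/\|v\|^2$ reach any prescribed value. With these two corrections your argument closes, but it then coincides with the paper's proof with superfluous machinery (finite-codimension subspaces, infinite-rank projections) on top.
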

\begin{proof} The operator $\mA^{\delta}$ is self-adjoint. This implies 
that $\spec(\mA^{\delta})\subset \R$. Since it has compact resolvent,
it is a direct application of \cite[Chap.\,III, Thm.\,6.29]{Kato95} that 
$\spec(\mA^{\delta})$ consists of isolated eigenvalues with finite multiplicities. 
Let us show that $\inf \spec(\mA^{\delta}) = -\infty$. According to 
\cite[Cor.\,4.1.5]{BiSo87}, it suffices to exhibit a sequence $(\xi_{m})_{m}$ of elements of $D(\mA^{\delta})$  
which satisfies $\lim_{m\to+\infty}(\mA^{\delta}\xi_{m},\xi_{m})_{\mL^{2}(\Omega)}= -\infty$ 
and $\Vert \xi_{m}\Vert_{\mL^{2}(\Omega)} = 1$. We proceed as in \cite[Prop.\,4.1]{BoRa02}.\\

\noindent 
Let us introduce the function $\xi'$ such that $\xi'(\bfx)=\exp(-1/(1-\vert\bfx\vert^{2}))$ for $\vert \bfx\vert<1$ and 
$\xi'(\bfx)=0 $ for $\vert \bfx\vert\geq 1$. One can prove that $\xi'\in \mathscr{C}_{0}^{\infty}(\mathbb{R}^{2})$. Define $\xi := \xi'/\Vert \xi'\Vert_{\mrm{L}^{2}(\Omega)}$. Now, take any point 
$\bfx_{0}\in \Omega_{-}^{\delta}$ so that $\sigma(\bfx) = \sigma_{-}<0$ in a neighbourhood of 
$\bfx_{0}$. Set $\xi_{m}(\bfx) := m\,\xi(m(\bfx - \bfx_{0}))$. For $m$ large enough, we have 
$\mrm{supp}(\xi_{m})\subset \Omega_{-}^{\delta}$. Elementary calculus shows that 
$\Vert \xi_{m}\Vert_{\mrm{L}^{2}(\Omega)} = 1$ and $\Vert \nabla \xi_{m}\Vert_{\mrm{L}^{2}(\Omega)}^{2} = 
m^{2}\Vert \nabla\xi\Vert_{\mrm{L}^{2}(\Omega)}^{2}\to +\infty$ for $m\to +\infty$. As a consequence, there holds 
\[
(\mA^{\delta}\xi_{m} ,\xi_{m})_{\mL^{2}(\Omega)} = \int_{\Omega}\sigma\vert \nabla \xi_{m}
\vert^{2}d\bfx = -m^{2}\,\vert\sigma_{-}\vert\,\Vert\nabla \xi\Vert_{\mrm{L}^{2}(\Omega)  }^{2}
\mathop{\longrightarrow}_{m\to +\infty} - \infty.
\]
This proves that $\inf \spec(\mA^{\delta}) = -\infty$. We establish similarly that 
$\sup \spec(\mA^{\delta}) = +\infty$ by choosing $\bfx_{0}\in \Omega_{+}^{\delta}$.
Finally, we may assume that the eigenvalues are indexed in increasing order, 
considering a renumbering if necessary. This concludes the proof.\end{proof}

\noindent In the present paper, our goal is to study the behaviour of the spectrum $\spec(\mA^{\delta})$ as $\delta \to 0$. 
We will use asymptotic analysis, providing error estimates. 

\subsection{Problematic}
In order to explain the underlying difficulty of this asymptotic analysis, using the Riesz representation theorem, we define the continuous linear operator $\mathfrak{L}^{\delta}:\mH_{0}^{1}(\Omega)\to \mH^{-1}(\Omega)$ such that 
\begin{equation}\label{DefContOp}
\langle \mathfrak{L}^{\delta} u,v\rangle_{\Omega} = (\sigma^{\delta} \nabla u,\nabla v)_{\mL^{2}(\Omega)},\qquad 
\forall u,v\in\mH_{0}^{1}(\Omega).
\end{equation}
In the above definition, $\langle\cdot,\cdot\rangle_{\Omega}$ refers to the duality pairing between $\mH^{-1}(\Omega)$ and $\mH^{1}_{0}(\Omega)$. As it is known from \cite{BoCC12}, for all $\delta>0$, the operator $\mathfrak{L}^{\delta}$ 
is Fredholm of index 0 (with a possible non trivial kernel) whenever $\kappa_{\sigma}=\sigma_{-}/\sigma_{+}\neq -1$, 
as the interface $\Sigma^{\delta}$ is smooth and meets $\partial\Omega$ 
orthogonally. In \cite[Thm.\,6.2]{BoCC12}, it is also proved that, as soon as $\Sigma^{\delta}$ presents a straight 
section, in the case $\kappa_{\sigma} =\sigma_{-}/\sigma_{+}=-1$, 
the operator $\mathfrak{L}^{\delta}$ is not of Fredholm type. Actually, for this configuration, one can check that ellipticity is lost for Problem (\ref{ExPb}) (see \cite{Sche60,RoSh63} and \cite{LiMa68}). Therefore, the situation $\kappa_{\sigma} = -1$ cannot be studied with the tools we propose. We refer the reader to \cite{Ola95,Nguy15} for more details concerning this case and we discard it from now on.

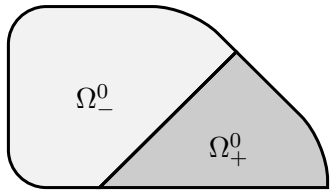
\begin{figure}[!ht]
\centering
\begin{tikzpicture}
\begin{scope}[scale=0.6]
\draw[black,fill=gray!10,line width = 0.4mm][rounded corners=5mm](-4,1) [sharp corners]--(-2,1)--(1,4)[rounded corners=5mm]--(0,5)--(-4,5)--cycle;
\draw[black,fill=gray!40,line width = 0.4mm](-2,1)[sharp corners]--(1,4)[rounded corners=5mm]--(3,2)[sharp corners]--(3,1)--cycle;
\draw[black, line width = 0.4mm](-2,1)--(1,4);
\end{scope}
\node at (0.5,1.1){$\Omega_{+}^{0}$};
\node at (-1.25,1.75){$\Omega_{-}^{0}$};
\end{tikzpicture}
\caption{Geometry for $\delta=0$.\label{GeometryDeltaZero}}
\end{figure}

\noindent Now, note that for $\delta=0$, the interface no longer meets $\partial\Om$ perpendicularly. As shown in \cite{BoCC12} and as mentioned in the introduction, there exist values of the contrasts $\kappa_{\sigma}=\sigma_{-}/\sigma_{+}$ for which the operator $\mathfrak{L}^{0}$ 
fails to be of Fredholm type, because of the existence of two strongly oscillating singularities at the corner point $O$. More precisely, for the present geometrical configuration, $\mathfrak{L}^{0}$ is a Fredholm operator if and only if, 
$\kappa_{\sigma}\in\R^{\ast}_{-}:=(-\infty;0)$ satisfies $\kappa_{\sigma}\notin[-1;-1/3]$. Here, the value $3$ comes from the ratio of the two apertures: 
$3=(\pi-\pi/4)/(\pi/4)$.\\
\newline
When $\mathfrak{L}^{0}$ is of Fredholm type, there is no qualitative difference between 
Problem (\ref{ExPb}) for $\delta>0$, and Problem (\ref{ExPb}) for $\delta=0$. In this case, using the analysis we provide in this article (and which was introduced in \cite{MaNP81}, \cite[Chap.\,4,\,9]{MaNP00}) we can prove that the spectrum of $\mA^{\delta}$ converges to the spectrum of $\mA^{0}$ as $\delta$ tends to zero. Since this result can be obtained from the 
approach we present here, in a more classical way, we have chosen not to present it.\\
\newline
When $\mathfrak{L}^{0}$ is not of Fredholm type, there is a qualitative difference between 
Problem (\ref{ExPb}) for $\delta>0$, and Problem (\ref{ExPb}) for $\delta=0$. The purpose of the present document is to 
study such a qualitative transition. When $\kappa_{\sigma} = -1/3$, the singularities associated to the corner have a more complex structure, with a logarithmic term. In the following, we discard this limit case, and therefore (unless otherwise stated), we assume that
\begin{equation}\label{CritInt}
\kappa_{\sigma} =\sigma_{-}/\sigma_{+}\in (-1;-1/3).
\end{equation}

\section{Limit problem}\label{LimitProblem}

Since we are interested in the behaviour of the spectrum of $\mA^{\delta}$ for $\delta\to 0$,
it seems natural to consider a problem similar to (\ref{ExPb}) with $\delta = 0$. 
To set such a limit problem, we have to choose a relevant functional setting. This point 
is non-trivial because, for $\delta = 0$, the interface $\Sigma^{\delta}$ does not 
intersect $\partial\Omega$ perpendicularly anymore, which prevents the limit problem 
from admitting Fredholm property in a standard Sobolev setting. This is our motivation for introducing a slightly 
different functional setting, based on weighted Sobolev (Kondratiev) spaces, that will be better suited to the present situation.

\subsection{Adapted functional setting}\label{StudySing}
The description of functional spaces adapted to this limit problem was one of the outcomes 
of \cite{BoCC13}. We dedicate this subsection to recalling results already established in 
the latter article. These results will be usefull for the analysis of the present article.\\

\noindent According to Kondratiev's theory, we need first to describe the singularities associated to the corner point $O$. Once singularities at $O$ have been computed, all the results become a consequence of the general theory of \cite{Kond67,MaPl77} (see also \cite{NaPl94,KoMR97}).  Singularities are functions of separate variables in polar coordinates which satisfy the homogeneous problem in the infinite corner. Define the function $\sigma^{0}$ by $\sigma^{0} = \sigma_{\pm}$ in $\Omega^{0}_{\pm}$.  
According to \S 4.1 in \cite{BoCC13}, the problem of finding couples 
$(\lambda,\varphi)\in \Cplx\times \mH^{1}_{0}(0;\pi)$ such that $\div(\sigma^{0}\nabla (r^{\lambda}\varphi(\theta))) = 0$ 
in $\Omega$ has non-trivial solutions only for $\lambda$ belonging to the set of singular exponents $\Lambda$ with 
\begin{equation}\label{SingExp}
\begin{array}{l}
\dsp{ \Lambda\;:=\; \big(\,2\,\Z\setminus\{0\}\,\big)
\cup\{\,i\mu+4\,\Z\,\}\cup\{\,-i\mu+
4\,\Z\,\} },\\[10pt]

\dsp{ \mu := -\frac{2}{\pi}\,\ln\Big\lbr\;\;
\frac{1}{2}\frac{\sigma_{+}-\sigma_{-} }{\sigma_{+}+\sigma_{-}}
+i\sqrt{1-\Big( \frac{1}{2}\frac{\sigma_{+}-\sigma_{-} }{\sigma_{+}
+\sigma_{-}} \Big)^{2}}\;\;\Big\rbr} .
\end{array}
\end{equation}
In the case where $\sigma_{-}/\sigma_{+}\in(-1;-1/3)$, 
we have $\mu\in(0;+\infty)$, so that the set $\Lambda$
contains only two elements in the strip $\vert\Re e\,\lambda\vert<2$, 
namely $\pm i\mu$. For $\lambda = \pm i\mu$, the space of functions
$\varphi\in \mH^{1}_{0}(0;\pi)$ such that $\div(\sigma^{0}\nabla 
(r^{\pm i\mu}\varphi(\theta)) = 0$ is one dimensional. It is generated by some 
$\phi$, both for $+i\mu$ and $-i\mu$ (see \cite[\S 4.1]{BoCC13}), such that
\begin{equation}\label{SingularExponents2}
\phi(\theta) = c_{\phi}\frac{\sinh(\mu\theta)}{\sinh(\mu\,\pi/4)}
\quad\textrm{on}\;\;\lbr0;\pi/4 \rbr\quad \textrm{and}\quad
\phi(\theta) = c_{\phi}\frac{\sinh(\mu(\pi-\theta)\,)}{\sinh(\mu\,3\pi/4)}
\quad\textrm{on}\;\;\lbr\pi/4;\pi \rbr,
\end{equation}
$c_{\phi}$ being a constant of $\R\setminus\{0\}$.
We have $\int_{0}^{\pi}\sigma^{0}(\theta)\, \phi(\theta)^{2} 
d\theta >0$ according to \cite[Lem. A.2]{BoCC13}. 
Hence, adjusting $c_{\phi}$ 
if necessary, we can normalize $\phi$ so that 
$\mu\int_{0}^{\pi}\sigma^{0}(\theta) \phi(\theta)^{2} d\theta = 1$.

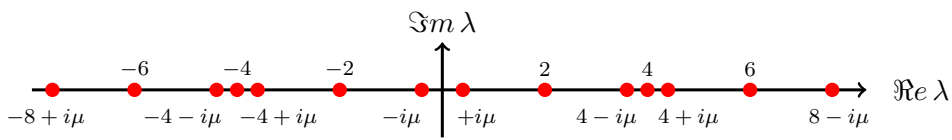
\begin{figure}[!ht]
\centering
\begin{tikzpicture}[scale=0.9]
\draw[black, line width = 0.4mm,->] (-6,0)--(6.2,0);
\draw[black, line width = 0.4mm,->] (0,-0.7)--(0,0.7);
\fill[red](-1.5,0) circle (3pt);
\fill[red](1.5,0) circle (3pt);
\fill[red](-3,0) circle (3pt);
\fill[red](3,0) circle (3pt);
\fill[red](-4.5,0) circle (3pt);
\fill[red](4.5,0) circle (3pt);
\fill[red](-0.3,0) circle (3pt);
\fill[red](-2.7,0) circle (3pt);
\fill[red](-3.3,0) circle (3pt);
\fill[red](-5.7,0) circle (3pt);
\fill[red](0.3,0) circle (3pt);
\fill[red](2.7,0) circle (3pt);
\fill[red](3.3,0) circle (3pt);
\fill[red](5.7,0) circle (3pt);
\node at (1.5,0.3){\scriptsize $2$};
\node at (3,0.3){\scriptsize $4$};
\node at (4.5,0.3){\scriptsize $6$};
\node at (-1.5,0.3){\scriptsize   $-2$};
\node at (-3,0.3){\scriptsize   $-4$};
\node at (-4.5,0.3){\scriptsize   $-6$};
\node at (0.5,-0.41){\scriptsize   $+i\mu$};
\node at (2.4,-0.41){\scriptsize   $4-i\mu$};
\node at (3.6,-0.41){\scriptsize   $4+i\mu$};
\node at (5.8,-0.41){\scriptsize   $8-i\mu$};
\node at (-0.6,-0.41){\scriptsize   $-i\mu$};
\node at (-2.4,-0.41){\scriptsize   $-4+i\mu$};
\node at (-3.8,-0.41){\scriptsize   $-4-i\mu$};
\node at (-5.8,-0.41){\scriptsize   $-8+i\mu$};
\node at (7,0){$\Re e\,\lambda$};
\node at (0,1){$\Im m\,\lambda$};
\end{tikzpicture}
\caption{Set $\Lambda$ for $\kappa_{\sigma} = -1/4$. This configuration is characteristic of the case $\kappa_{\sigma}\in(-\infty;-3)\cup(-1/3;0)$.\label{Expo singu hors inter critique 1}}
\end{figure}\begin{figure}[!ht]
\centering
\begin{tikzpicture}[scale=0.9]
\draw[black, line width = 0.4mm,->] (-6,0)--(6.2,0);
\draw[black, line width = 0.4mm,->] (0,-1.4)--(0,1.7);
\fill[red](0,1) circle (3pt);
\fill[red](0,-1) circle (3pt);
\fill[red](-1.5,0) circle (3pt);
\fill[red](1.5,0) circle (3pt);
\fill[red](-3,0) circle (3pt);
\fill[red](3,0) circle (3pt);
\fill[red](-3,1) circle (3pt);
\fill[red](3,1) circle (3pt);
\fill[red](-3,-1) circle (3pt);
\fill[red](3,-1) circle (3pt);
\fill[red](-4.5,0) circle (3pt);
\fill[red](4.5,0) circle (3pt);
\node at (1.5,0.3){\scriptsize $2$};
\node at (3,0.3){\scriptsize $4$};
\node at (4.5,0.3){\scriptsize $6$};
\node at (-1.5,0.3){\scriptsize   $-2$};
\node at (-3,0.3){\scriptsize   $-4$};
\node at (-4.5,0.3){\scriptsize   $-6$};
\node at (0.4,1.3){\scriptsize $+i\mu$};
\node at (0.4,-0.7){\scriptsize $-i\mu$};
\node at (3,1.3){\scriptsize $4+i\mu$};
\node at (3,-0.7){\scriptsize $4-i\mu$};
\node at (-3,1.3){\scriptsize $-4+i\mu$};
\node at (-3,-0.7){\scriptsize $-4-i\mu$};
\node at (-5.8,-0.41){\scriptsize   \phantom{$-8+i\mu$}};
\node at (7,0){$\Re e\,\lambda$};
\node at (0,2){$\Im m\,\lambda$};
\end{tikzpicture}
\caption{Set $\Lambda$ for $\kappa_{\sigma} = -1/2$. This configuration is characteristic of the case $\kappa_{\sigma}\in(-1;-1/3)$.}
\end{figure}

\begin{figure}[!ht]
\centering
\begin{tikzpicture}[scale=0.9]
\draw[black, line width = 0.4mm,->] (-6,0)--(6.2,0);
\draw[black, line width = 0.4mm,->] (0,-1.2)--(0,1.7);
\fill[red](-3,0) circle (3pt);
\fill[red](3,0) circle (3pt);
\fill[red](-1.5,1) circle (3pt);
\fill[red](1.5,1) circle (3pt);
\fill[red](-1.5,-1) circle (3pt);
\fill[red](1.5,-1) circle (3pt);
\fill[red](-1.5,0) circle (3pt);
\fill[red](1.5,0) circle (3pt);
\fill[red](-4.5,1) circle (3pt);
\fill[red](4.5,1) circle (3pt);
\fill[red](-4.5,-1) circle (3pt);
\fill[red](4.5,-1) circle (3pt);
\fill[red](-4.5,0) circle (3pt);
\fill[red](4.5,0) circle (3pt);
\node at (1.5,0.3){\scriptsize $2$};
\node at (3,0.3){\scriptsize $4$};
\node at (4.5,0.3){\scriptsize $6$};
\node at (-1.5,0.3){\scriptsize   $-2$};
\node at (-3,0.3){\scriptsize   $-4$};
\node at (-4.5,0.3){\scriptsize   $-6$};
\node at (1.5,1.3){\scriptsize $2+i\mu$};
\node at (-1.5,1.3){\scriptsize $-2+i\mu$};
\node at (4.5,1.3){\scriptsize $6+i\mu$};
\node at (-4.5,1.3){\scriptsize $-6+i\mu$};
\node at (1.5,-0.7){\scriptsize $2-i\mu$};
\node at (-1.5,-0.7){\scriptsize $-2-i\mu$};
\node at (4.5,-0.7){\scriptsize $6-i\mu$};
\node at (-4.5,-0.7){\scriptsize $-6-i\mu$};
\node at (-5.8,-0.41){\scriptsize   \phantom{$-8+i\mu$}};
\node at (7,0){$\Re e\,\lambda$};
\node at (0,2){$\Im m\,\lambda$};
\end{tikzpicture}
\caption{Set $\Lambda$ for $\kappa_{\sigma} = -2$. This configuration is characteristic of the case $\kappa_{\sigma}\in(-3,-1)$. \label{Expo singu hors inter critique 2}}
\end{figure}

\begin{remark}\label{PropertiesSingu}
$\bullet$ For $\kappa_{\sigma}=-1/3$, one has $0\in\Lambda$. The singularities associated with the singular exponent $0$ are $(r,\theta)\mapsto c\,\varphi(\theta)$ and $(r,\theta)\mapsto c\,\ln r\,\varphi(\theta)$, where $c$ is a constant, $\varphi(\theta)=\theta$ on $\lbr0;\pi/4 \rbr$ and $\varphi(\theta)=(\pi-\theta)/3$ on $\lbr\pi/4;\pi \rbr$. As previously announced, we do not study this limit case here.\\
$\bullet$ For $\kappa_{\sigma}\in(-1;-1/3)$ such that $\kappa_{\sigma}\to -1^{+}$, there holds $\mu\to+\infty$.\\
$\bullet$ Finally, for $\kappa_{\sigma}<0$ such that $\kappa_{\sigma}\notin[-1;-1/3]$, there holds $\Lambda\cap\{\lambda\in\Cplx\,|\,\Re e\,\lambda=0\}=\emptyset$. 
Consequently, in this case, we can prove that the limit problem for $\delta=0$ admits Fredholm property in the standard Sobolev setting $\mH^1_0(\Om)$.
\end{remark}
\begin{remark}\label{RmkAngleQconque}
Let us discuss briefly the situation where the aperture of the corner at $O$ is not $\pi/4$ but a value $\vartheta\in(0;\pi/2)$ (for the case $\vartheta\in(\pi/2;\pi)$ multiply the partial differential equation (\ref{ExPb}) by ``$-$'' to exchange the roles of $\Om_+$ and $\Om_-$). In this case, the critical interval defined in (\ref{CritInt}) is not $(-1;-1/3)$ but $(-1;-\vartheta/(\pi-\vartheta))$ and the set of singular exponents $\Lambda$ can not be computed explicitly as in (\ref{SingExp}). However, as proved in \cite[Lemma 2]{BCCC15}, for all $\kappa_{\sigma}\in(-1;-\vartheta/(\pi-\vartheta))$, we have $\Lambda\cap\{\lambda\in\Cplx\,|\,\Re e\,\lambda=0\}=\{\pm i\mu\}$ for some $\mu>0$ depending on $\kappa_{\sigma}$. Due to this property, phenomena analogous to the ones presented in this work in the case $\kappa_{\sigma}\in(-1;-1/3)$ appear. Mathematically, they can be explained following exactly the analysis developed below.  
\end{remark}

\quad\\
Let $\mathscr{C}^{\infty}_{0}(\overline{\Omega}\setminus\{O\})$ refer to the set of infinitely differentiable functions supported in $\overline{\Omega}\setminus\{O\}$. For $\beta\in \R$ and $k\geq 0$, we define 
the Kondratiev space $\mV^{k}_{\beta}(\Omega)$ as the completion of $\mathscr{C}^{\infty}_0(\overline{\Omega}\setminus\{O\})$
for the norm 
\[
\Vert v\Vert_{\mV^{k}_{\beta}(\Omega)} := \Big(\sum_{|\alpha|\le k}\int_{\Omega}
r^{2(\beta+\vert\alpha\vert-k)}\vert\partial^{\alpha}_{\bfx} v\vert^{2}\;d\bfx\;\Big)^{1/2}.
\]
Observe that $\mV^{0}_{0}(\Omega) = \mL^{2}(\Omega)$. To take into account the Dirichlet boundary condition on $\partial\Om$, for $\beta\in\R$, we introduce the space
\begin{equation}\label{defWeightedSobo}
\mathring{\mV}^{1}_{\beta}(\Omega):=\left\{ v\in 
\mV^{1}_{\beta}(\Omega)\;\vert\;v = 0\mbox{ on }\partial\Omega\setminus\{O\}\right\}.
\end{equation}
One can check that for all $\beta\in\R$, $\mathring{\mV}^{1}_{\beta}(\Omega)$ is equal to the completion of $\mathscr{C}^{\infty}_0(\Omega)$ for the norm $\Vert\cdot\Vert_{\mV^{1}_{\beta}(\Omega)}$. Moreover, using a Poincar\'e inequality on the arc $(0;\pi)$, we can prove the estimate $\|r^{-1}v\|_{\Om}\le c\,\|\nabla v\|_{\Om}$ for all $v\in\mathring{\mV}^{1}_{0}(\Omega)$ (see \S 1.3.1 \cite[Vol.\,1]{MaNP00}). This allows to conclude that $\mH^1_0(\Om)=\mathring{\mV}^{1}_{0}(\Omega)$. The norm in the dual space to $\mathring{\mV}^{1}_{\beta}(\Omega)$ 
is the intrinsic norm
\begin{equation}\label{DualNorm}
\Vert g\Vert_{\mathring{\mV}^{1}_{\beta}(\Omega)^{*}} := \sup_{v\in \mathring{\mV}^{1}_{\beta}(\Omega)\setminus\{0\}}
\frac{\vert \langle g,v\rangle_{\Omega}\vert}{\Vert v\Vert_{\mV^{1}_{\beta}(\Omega)}}\,,
\end{equation}
where $\langle\cdot,\cdot\rangle_{\Omega}$ refers to the duality pairing between 
$\mathring{\mV}^{1}_{\beta}(\Omega)^{*}$ and $\mathring{\mV}^{1}_{\beta}(\Omega)$. Although we adopt 
the same notation for the pairing between $\mH^{-1}(\Omega)$ and 
$\mH^{1}_{0}(\Omega)$, this will not bring further confusion.\\
\newline
For $\beta\in\R$, define the bounded 
linear operator $\mathfrak{L}_{\beta}: \mathring{\mV}_{\beta}^{1}(\Omega)\to \mathring{\mV}_{-\beta}^{1}(\Omega)^{*}$ such that 
\begin{equation}\label{defOpLbeta}
\langle\mathfrak{L}_{\beta} u,v\rangle_{\Omega} = (\sigma^0\nabla u,\nabla v)_{\mL^{2}(\Omega)},\qquad\forall u\in \mathring{\mV}_{\beta}^{1}(\Omega),\ v\in \mathring{\mV}_{-\beta}^{1}(\Omega).
\end{equation}
Note in particular that $\mathfrak{L}_{0}=\mathfrak{L}^{0}$ where $\mathfrak{L}^{0}$ has been introduced in (\ref{DefContOp}). The following proposition is a consequence of Kondratiev's theory (see 
\cite[Thm.\,4.1]{BoCC13}, in particular Estimate (5.15) of the proof of \cite[Thm.\,4.1]{BoCC13}, for the details).

\begin{proposition}\label{FredholmnessFarField}
Assume that $\kappa_{\sigma} = \sigma_{-}/\sigma_{+}\neq -1$. Then, the operator 
$\mathfrak{L}_{\beta}: \mathring{\mV}_{\beta}^{1}(\Omega)\to \mathring{\mV}_{-\beta}^{1}(\Omega)^{*}$ is of Fredholm type if and only if no element $\lambda\in \Lambda$ satisfies $\Re e\,\lambda = \beta$. Moreover, if no element $\lambda\in \Lambda$ satisfies $\Re e\,\lambda = \beta$, there holds the estimate
\begin{equation}\label{estimPropositionFFF}
\|u\|_{\mV_{\beta}^{1}(\Omega)} \le C\,(\|\mathfrak{L}_{\beta} u\|_{\mathring{\mV}_{-\beta}^{1}(\Omega)^{*}}+\|u\|_{\mL^{2}(\Omega\setminus\overline{\mrm{D}(O,1)})}),\qquad\forall u\in\mathring{\mV}_{\beta}^{1}(\Omega).
\end{equation}
Here, $C>0$ is a constant independent of $u\in\mathring{\mV}_{\beta}^{1}(\Omega)$.
\end{proposition}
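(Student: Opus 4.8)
The plan is to deduce Proposition~\ref{FredholmnessFarField} from Kondratiev's theory along the lines of \cite[\S\,4--5]{BoCC13}; in fact the statement is essentially a reformulation of \cite[Thm.\,4.1]{BoCC13}, and \eqref{estimPropositionFFF} corresponds to its Estimate (5.15). The first and central step concerns the model problem obtained by blowing up the domain $\Omega^{0}$ at $O$, namely the infinite plane angle $\mathcal{K}:=\R\times\R^{\ast}_{+}$ split by the ray $\{\theta=\pi/4\}$, carrying the coefficient $\sigma_{-}$ on $\{0<\theta<\pi/4\}$ and $\sigma_{+}$ on $\{\pi/4<\theta<\pi\}$. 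Writing the associated sesquilinear form in polar coordinates $(r,\theta)$ and applying the Mellin transform in $r$ turns $-\div(\sigma^{0}\nabla\cdot)$ into the holomorphic operator family $\lambda\mapsto\mathfrak{A}(\lambda)$ on the arc $(0;\pi)$ with Dirichlet conditions, associated with the problem of finding $\varphi\in\mH^{1}_{0}(0;\pi)$ such that $\div(\sigma^{0}\nabla(r^{\lambda}\varphi(\theta)))=0$; by the computation recalled in \S\ref{StudySing} (from \cite[\S\,4.1]{BoCC13}), $\mathfrak{A}(\lambda)$ fails to be invertible exactly when $\lambda\in\Lambda$. If no $\lambda\in\Lambda$ has $\Re e\,\lambda=\beta$, then $\mathfrak{A}(\lambda)^{-1}$ exists for every $\lambda$ on the line $\{\Re e\,\lambda=\beta\}$, and the standard bounds on $\mathfrak{A}(\lambda)^{-1}$ for $|\Im m\,\lambda|\to\infty$, together with the discreteness of $\Lambda$, give $\sup_{\Re e\,\lambda=\beta}\|\mathfrak{A}(\lambda)^{-1}\|<\infty$. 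By Parseval's identity for the Mellin transform this means that the model operator $\mathring{\mV}^{1}_{\beta}(\mathcal{K})\to\mathring{\mV}^{1}_{-\beta}(\mathcal{K})^{*}$ is an isomorphism, with a companion a priori bound.

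Next I would transfer this to $\Omega$ via the partition of unity $1=\psi+\chi$ built from the cut-off functions of Section~\ref{Description of the problem}. For $u\in\mathring{\mV}^{1}_{\beta}(\Omega)$, the function $\psi u$ is supported in $\mrm{D}(O,2)$, where $\sigma^{0}$ coincides with the model coefficient, so the model estimate bounds $\|\psi u\|_{\mV^{1}_{\beta}(\Omega)}$ by $\|\mathfrak{L}_{\beta}(\psi u)\|_{\mathring{\mV}^{1}_{-\beta}(\Omega)^{*}}$; writing $\mathfrak{L}_{\beta}(\psi u)=\psi\,\mathfrak{L}_{\beta}u+[\mathfrak{L}_{\beta},\psi]u$ and noting that the commutator is first order and supported in the annulus $\{1\le r\le 2\}$, where the weights are equivalent to constants, it is absorbed at the price of a term $\|u\|_{\mL^{2}(\Omega\setminus\overline{\mrm{D}(O,1)})}$. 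On the other hand $\chi u$ vanishes near $O$; as $\Omega$ is bounded the weight $r^{\pm\beta}$ is bounded above and below on $\supp\,\chi$, so there $\mV^{1}_{\beta}$ and $\mH^{1}$ norms are equivalent, and since $\Sigma^{0}\setminus\{O\}$ is smooth and meets $\partial\Omega$ orthogonally, the classical analysis of \cite{BoCC12} (valid because $\kappa_{\sigma}\neq-1$) gives local Fredholmness there with an $\mL^{2}$ remainder. Adding the two contributions yields \eqref{estimPropositionFFF}. Since the embedding $\mathring{\mV}^{1}_{\beta}(\Omega)\to\mL^{2}(\Omega\setminus\overline{\mrm{D}(O,1)})$ is compact, \eqref{estimPropositionFFF} implies that $\mathfrak{L}_{\beta}$ has finite-dimensional kernel and closed range; the same reasoning applied to $\mathfrak{L}_{-\beta}$ --- also Fredholm, the set $\Re e\,\Lambda$ being symmetric because $\Lambda=-\Lambda$ --- combined with duality gives a finite-dimensional cokernel, so $\mathfrak{L}_{\beta}$ is of Fredholm type.

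For the necessity of the condition, assume some $\lambda_{0}\in\Lambda$ has $\Re e\,\lambda_{0}=\beta$ and let $\varphi_{0}\neq 0$ span $\Ker\mathfrak{A}(\lambda_{0})$, so that $r^{\lambda_{0}}\varphi_{0}(\theta)$ solves the homogeneous model problem in $\mathcal{K}$. Cutting it off to a sequence of shrinking annuli, say $\{e^{-2n}\le r\le e^{-n}\}$, with slowly varying (logarithmic) cut-offs produces functions $u_{n}\in\mathring{\mV}^{1}_{\beta}(\Omega)$ with $\supp\,u_{n}\subset\mrm{D}(O,1)$, with $\|u_{n}\|_{\mV^{1}_{\beta}(\Omega)}$ bounded from below, and with $\|\mathfrak{L}_{\beta}u_{n}\|_{\mathring{\mV}^{1}_{-\beta}(\Omega)^{*}}/\|u_{n}\|_{\mV^{1}_{\beta}(\Omega)}\to 0$ (the commutator only sees the transition regions, whose weighted size is controlled by the slow variation of the cut-offs). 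Because $u_{n}$ is supported inside $\mrm{D}(O,1)$, inequality \eqref{estimPropositionFFF} would force $\|u_{n}\|_{\mV^{1}_{\beta}(\Omega)}\le C\|\mathfrak{L}_{\beta}u_{n}\|_{\mathring{\mV}^{1}_{-\beta}(\Omega)^{*}}$, contradicting the last limit; hence no such estimate holds and $\mathfrak{L}_{\beta}$ is not Fredholm. I expect the genuinely delicate point to be the very first step: the operator is not strongly elliptic, since $\sigma^{0}$ changes sign across $\Gamma$, so the textbook Kondratiev theory does not apply off the shelf; the required substitute --- invertibility of $\mathfrak{A}(\lambda)$ away from $\Lambda$ with uniform resolvent bounds on vertical lines --- is exactly what \cite{BoDR99,Ramd99} established and \cite{BoCC13} made quantitative, so in the present article it can simply be invoked, after which the remaining bookkeeping (and the observation that $\mathring{\mV}^{1}_{0}(\Omega)=\mH^{1}_{0}(\Omega)$, so that $\beta=0$ is genuinely covered) is routine.
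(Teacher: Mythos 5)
The paper itself does not prove this proposition: it invokes \cite[Thm.\,4.1]{BoCC13} and Estimate (5.15) of its proof, and your reconstruction follows exactly that Kondratiev/Mellin route (pencil on the arc, isomorphism of the model problem in the half-plane, localization, Peetre's lemma, duality using $\Lambda=-\Lambda$), with the genuinely delicate input --- invertibility of the pencil off $\Lambda$ with uniform bounds on vertical lines despite the lack of strong ellipticity --- correctly deferred to \cite{BoDR99,Ramd99,BoCC13}. Two steps are, however, glossed in a way that matters. In the localization step, the commutator $[\mathfrak{L}_{\beta},\psi]u$ is controlled in $\mathring{\mV}^{1}_{-\beta}(\Omega)^{*}$ by $\|\nabla u\|_{\mL^{2}(\{1\le r\le 2\})}$, not directly by $\|u\|_{\mL^{2}(\Omega\setminus\overline{\mrm{D}(O,1)})}$; to reach the remainder appearing in \eqref{estimPropositionFFF} you must apply the local T-coercivity estimate of \cite{BoCC12} (the same one you use for $\chi u$) on a slightly larger region covering the transition annulus and patch the two estimates. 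This is routine, but without it the claimed absorption does not close.

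More seriously, the necessity direction contains a logical jump: your singular sequence shows that the estimate \eqref{estimPropositionFFF} fails when some $\lambda\in\Lambda$ has $\Re e\,\lambda=\beta$, but failure of that particular estimate does not by itself contradict Fredholmness. The converse of Peetre's lemma yields $\|u\|_{\mV^{1}_{\beta}(\Omega)}\le C(\|\mathfrak{L}_{\beta}u\|_{\mathring{\mV}^{1}_{-\beta}(\Omega)^{*}}+\|Ku\|)$ only for compact $K$ that are injective on $\Ker\,\mathfrak{L}_{\beta}$, and with $K$ the restriction to $\Omega\setminus\overline{\mrm{D}(O,1)}$ you would first have to exclude kernel elements supported in $\mrm{D}(O,1)$ (this can be done by unique continuation: such a $u$ is harmonic in $\Omega^{0}_{+}$ and vanishes on an open subset, hence vanishes there, then has zero Cauchy data on $\Sigma^{0}$ and so vanishes in $\Omega^{0}_{-}$ --- but it must be argued). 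The cleaner standard route avoids the estimate altogether: your $u_{n}$ are bounded below in $\mV^{1}_{\beta}(\Omega)$, satisfy $\mathfrak{L}_{\beta}u_{n}\to 0$, and converge weakly to $0$ because their supports shrink to $O$; if $\mathfrak{L}_{\beta}$ had finite-dimensional kernel and closed range, decomposing $u_{n}$ along the kernel would force $u_{n}\to 0$ strongly, a contradiction. With either repair your argument is complete and coincides with the proof the paper cites.
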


\noindent 
Note that, according to (\ref{SingExp}),  for $\kappa_{\sigma}\in(-1;-1/3)$ the set 
$\Lambda$ contains two purely imaginary 
singular exponents $\pm i\mu$. Hence the proposition above shows that 
$\mathfrak{L}_{0}: \mathring{\mV}_{0}^{1}(\Omega) = \mH^{1}_{0}(\Omega)\to \mH^{-1}(\Omega)$ is 
not of Fredholm type: this confirms that the standard Sobolev setting is not adapted 
to the limit geometry $\Omega_{\pm}^{0}$ described in Figure \ref{GeometryDeltaZero}. On the other hand, for $\kappa_{\sigma}\in(-1;-1/3)$, there is no element  $\lambda\in\Lambda$ 
satisfying $0<\vert \Re e\,\lambda\vert <2$. As a consequence, for all $\beta\in(-2;0)\cup(0;2)$, the operator $\mathfrak{L}_{\beta}$ is of Fredholm type whenever $\kappa_{\sigma}\in(-1;-1/3)$. Let us define
\begin{equation}\label{DefSing}
\sgpm(r,\theta)\;:=\;\psi(r)r^{\pm i\mu}\phi(\theta),
\end{equation}
where $\psi\in\mathscr{C}^{\infty}(\R,[0;1])$ is the cut-off function introduced right before \S\ref{paragraph geom}. In particular, we recall it satisfies $\psi(r) = 1$ 
for $r\leq 1$ and  $\psi(r) = 0$ for $r\geq 2$. Observe that in a neighbourhood of $O$, $\sgpm$ has the same behaviour as the singularity associated with the purely imaginary singular exponent $\pm i\mu$. Moreover, the multiplication by the cut-off function $\psi$ ensures that $\sgpm=0$ on $\partial\Om\setminus\{O\}$. A direct computation shows that $\sgpm \in \mathring{\mV}^{1}_{\beta}(\Omega)$ for all $\beta>0$. Application of 
Kondratiev's calculus to the operators $\mathfrak{L}_{\beta}$ yields the following decomposition result (\cite[Thm.\,5.2]{BoCC13}):
\begin{proposition}\label{WeightExp}
Assume that $\kappa_{\sigma}\in(-1;-1/3)$, so that the only elements $\lambda\in\Lambda$
satisfying $-2<\Re e\,\lambda < +2$ are $\lambda = \pm i\mu$. Let $\beta\in(0;2)$ be given and $v$ be an element of $\mathring{\mV}^{1}_{\beta}(\Omega)$ such that $\mathfrak{L}_{\beta}v\in 
\mathring{\mV}^{1}_{\beta}(\Omega)^{*}$ (the important point here is that $\mathring{\mV}^{1}_{\beta}(\Omega)^{*}$ is embedded in $\mathring{\mV}^{1}_{-\beta}(\Omega)^{*}$ since $-\beta<\beta$). Then, there holds the following representation
\[
v = c_{+}\sgp+c_{-}\sgm+\tilde{v},\qquad\mbox{ with }c_{\pm}\in \Cplx,\,\tilde{v}\in\mathring{\mV}^{1}_{-\beta}(\Omega).
\] 
\end{proposition}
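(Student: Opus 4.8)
The plan is to combine the Fredholm property of $\mathfrak{L}_{\beta}$ for $\beta\in(-2;0)\cup(0;2)$ (Proposition \ref{FredholmnessFarField}) with the explicit knowledge of the singular exponents $\pm i\mu$ in the strip $|\Re e\,\lambda|<2$, using the standard Kondratiev asymptotic-recovery argument. First I would observe that $v\in\mathring{\mV}^{1}_{\beta}(\Omega)$ with $\beta>0$ is, a fortiori, an element of $\mathring{\mV}^{1}_{-\beta'}(\Omega)$ for every $\beta'<\beta$ close to $\beta$, and in particular $v\in\mathring{\mV}^{1}_{-\beta}(\Omega)$ since $-\beta<0<\beta$ gives $\mathring{\mV}^{1}_{\beta}(\Omega)\subset\mathring{\mV}^{1}_{-\beta}(\Omega)$ (smaller weight index $\Rightarrow$ larger space near $O$). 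Set $g:=\mathfrak{L}_{\beta}v$; by hypothesis $g\in\mathring{\mV}^{1}_{\beta}(\Omega)^{*}\subset\mathring{\mV}^{1}_{-\beta}(\Omega)^{*}$, using $-\beta<\beta$. Thus $v$ and $g$ fit into the framework of $\mathfrak{L}_{-\beta}:\mathring{\mV}^{1}_{-\beta}(\Omega)\to\mathring{\mV}^{1}_{\beta}(\Omega)^{*}$, which is Fredholm because no $\lambda\in\Lambda$ has $\Re e\,\lambda=-\beta$.

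Next I would invoke Kondratiev's difference-of-weights theorem (as in \cite{Kond67,KoMR97,MaPl77}, and precisely in the form stated in \cite[Thm.\,5.2]{BoCC13}): if $v\in\mathring{\mV}^{1}_{\beta}(\Omega)$ satisfies $\mathfrak{L}_{\beta}v\in\mathring{\mV}^{1}_{-\beta}(\Omega)^{*}$ \emph{and} one shifts the weight from $\beta$ down to $-\beta$, then $v$ admits the asymptotic expansion $v=\sum c_{\lambda}\,\eta(r)\,r^{\lambda}\varphi_{\lambda}(\theta)+\tilde v$, the sum running over the singular exponents $\lambda\in\Lambda$ with $-\beta<\Re e\,\lambda<\beta$ (with a cut-off $\eta$ localizing near $O$), and the remainder $\tilde v\in\mathring{\mV}^{1}_{-\beta}(\Omega)$. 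Under the assumption $\kappa_{\sigma}\in(-1;-1/3)$ the only such exponents are $\lambda=\pm i\mu$ (here one uses $0<\beta<2$ and the description of $\Lambda$ in \eqref{SingExp}), and for each the corresponding angular profile is the one-dimensional space spanned by $\phi$ from \eqref{SingularExponents2}. Hence the expansion collapses to $v=c_{+}\,\psi(r)r^{i\mu}\phi(\theta)+c_{-}\,\psi(r)r^{-i\mu}\phi(\theta)+\tilde v=c_{+}\sgp+c_{-}\sgm+\tilde v$, where I use the cut-off $\psi$ of \eqref{DefSing} (any admissible cut-off differs from $\psi$ by a function in $\mathscr{C}^{\infty}_0(\overline\Omega\setminus\{O\})\subset\mathring{\mV}^{1}_{-\beta}(\Omega)$, so it may be absorbed into $\tilde v$). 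The coefficients $c_{\pm}\in\Cplx$ are uniquely determined.

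The one genuinely delicate point is the invocation of the asymptotic-recovery step: one must check that the hypotheses of \cite[Thm.\,5.2]{BoCC13} are met, namely that $\mathfrak{L}_{\beta}v$ lies in the \emph{smaller} dual space $\mathring{\mV}^{1}_{\beta}(\Omega)^{*}$ rather than merely in $\mathring{\mV}^{1}_{-\beta}(\Omega)^{*}$ — this is exactly the regularity that forces the appearance of the two singularities and is explicitly flagged in the statement. Everything else is routine bookkeeping with weighted spaces: the embeddings $\mathring{\mV}^{1}_{\beta}\subset\mathring{\mV}^{1}_{-\beta}$ and $\mathring{\mV}^{1}_{\beta}(\Omega)^{*}\subset\mathring{\mV}^{1}_{-\beta}(\Omega)^{*}$ for $\beta>0$, the fact that $\sgpm\in\mathring{\mV}^{1}_{\beta'}(\Omega)$ for every $\beta'>0$ but $\sgpm\notin\mathring{\mV}^{1}_{-\beta'}(\Omega)$ (so the decomposition is non-trivial), and the computation that $r^{\pm i\mu}\phi(\theta)$ are precisely the singular solutions attached to the exponents $\pm i\mu$, already recorded in \eqref{SingExp}–\eqref{SingularExponents2}. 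I would therefore present this essentially as a corollary of \cite[Thm.\,5.2]{BoCC13}, spelling out only the identification of the finite sum with $c_{+}\sgp+c_{-}\sgm$ and the placement of the remainder in $\mathring{\mV}^{1}_{-\beta}(\Omega)$.
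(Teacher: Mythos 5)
Your overall route is the same as the paper's: Proposition \ref{WeightExp} is there presented precisely as a direct consequence of the Kondratiev asymptotics result \cite[Thm.\,5.2]{BoCC13} (weight shift from $\beta$ to $-\beta$, only exponents $\pm i\mu$ in the strip $|\Re e\,\lambda|<2$, cut-off discrepancies absorbed in $\tilde v$), and your second and third paragraphs, including the correct identification of the crucial hypothesis $\mathfrak{L}_{\beta}v\in \mathring{\mV}^{1}_{\beta}(\Omega)^{*}$, carry that out properly (the statement of the cited theorem in your second paragraph has a slip --- the hypothesis must be membership in the smaller dual $\mathring{\mV}^{1}_{\beta}(\Omega)^{*}$, not in $\mathring{\mV}^{1}_{-\beta}(\Omega)^{*}$, which is automatic --- but you fix this yourself later).

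However, your first paragraph contains a genuine error: the embedding of the weighted spaces goes the other way. Since the weight $r^{2\beta}$ (resp. $r^{2(\beta-1)}$) gets \emph{weaker} near $O$ as $\beta$ increases, one has $\mathring{\mV}^{1}_{\beta_1}(\Omega)\subset\mathring{\mV}^{1}_{\beta_2}(\Omega)$ for $\beta_1\le\beta_2$ (compare the paper's own use of $\mathring{\mV}^{1}_{0}(\Omega)\subset\mathring{\mV}^{1}_{1}(\Omega)$ in the proof of Proposition \ref{propoDcpDA}); hence $\mathring{\mV}^{1}_{-\beta}(\Omega)\subset\mathring{\mV}^{1}_{\beta}(\Omega)$, and your claims ``$\mathring{\mV}^{1}_{\beta}(\Omega)\subset\mathring{\mV}^{1}_{-\beta}(\Omega)$'' and ``in particular $v\in\mathring{\mV}^{1}_{-\beta}(\Omega)$'' are false in general. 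If they were true the proposition would be vacuous (one could always take $c_{\pm}=0$), and they contradict the fact, which you correctly record afterwards, that $\sgpm\in\mathring{\mV}^{1}_{\beta'}(\Omega)\setminus\mathring{\mV}^{1}_{-\beta'}(\Omega)$ for $\beta'>0$; they are also inconsistent with the dual embedding $\mathring{\mV}^{1}_{\beta}(\Omega)^{*}\subset\mathring{\mV}^{1}_{-\beta}(\Omega)^{*}$ that you state correctly in the same paragraph. Consequently $v$ does \emph{not} ``fit into the framework of $\mathfrak{L}_{-\beta}$'': only the datum $g=\mathfrak{L}_{\beta}v$ lies in the target space of $\mathfrak{L}_{-\beta}$, and the whole point of the Kondratiev step is that the obstruction to $v$ belonging to the domain $\mathring{\mV}^{1}_{-\beta}(\Omega)$ is exactly measured by the singular functions attached to the exponents in the strip $-\beta<\Re e\,\lambda<\beta$, here $\pm i\mu$. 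Deleting that preliminary observation and letting the argument rest solely on \cite[Thm.\,5.2]{BoCC13}, as in your last two paragraphs, gives a correct justification coinciding with the paper's.
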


\subsection{Limit self-adjoint operators}
We are interested in finding an operator that would be the ``limit'' of $\mA^{\delta}$ as $\delta\to 0$. A first candidate 
may perhaps consist in the unbounded operator $\mathfrak{A}:D(\mathfrak{A})
\to \mL^{2}(\Omega)$ defined by
\[
\begin{array}{|l}
\mathfrak{A} v\;:=\; -\mrm{div}(\sigma^{0}\nabla v)\\[6pt]
D(\mathfrak{A}) := \{ v\in \mH^{1}_{0}(\Omega)\;\vert\; \mrm{div}(\sigma^{0}\nabla v)
\in\mL^{2}(\Omega)\}.
\end{array}
\]
Using the weighted Sobolev spaces introduced in (\ref{defWeightedSobo}), the regularity at $O$ of the elements of $D(\mathfrak{A})$ can be determined more precisely.
\begin{proposition}\label{propoDcpDA}
There holds $D(\mathfrak{A}) := \{ v\in \mathring{\mV}^{1}_{-1}(\Omega)\;\vert\; \mrm{div}(\sigma^{0}\nabla v)\in\mL^{2}(\Omega)\}$. Moreover, there exists a constant $C>0$ such that
\begin{equation}\label{estimAPdomainReg}
\|v\|_{\mV_{-1}^{1}(\Omega)} \le C\,(\|\mathfrak{A}v\|_{\mL^{2}(\Omega)}+\|v\|_{\mL^{2}(\Omega\setminus\overline{\mrm{D}(O,1)})}),\qquad\forall v\in D(\mathfrak{A}).
\end{equation}
\end{proposition}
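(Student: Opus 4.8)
The plan is to prove the set equality by a double inclusion, using the Kondratiev decomposition of Proposition \ref{WeightExp} as the main engine, and then to derive the estimate \eqref{estimAPdomainReg} from the closed graph theorem together with estimate \eqref{estimPropositionFFF}. For the easy inclusion, observe that if $v\in\mathring{\mV}^{1}_{-1}(\Omega)$ with $\div(\sigma^{0}\nabla v)\in\mL^{2}(\Omega)$, then since $-1<0$ we have $\mathring{\mV}^{1}_{-1}(\Omega)\subset\mathring{\mV}^{1}_{0}(\Omega)=\mH^{1}_{0}(\Omega)$, so such a $v$ lies in $D(\mathfrak{A})$. The substantive content is the reverse inclusion: given $v\in\mH^{1}_{0}(\Omega)=\mathring{\mV}^{1}_{0}(\Omega)$ with $\div(\sigma^{0}\nabla v)\in\mL^{2}(\Omega)$, one must show $v\in\mathring{\mV}^{1}_{-1}(\Omega)$.

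For the reverse inclusion, first I would fix some $\beta\in(0;2)$ (say $\beta=1$) and note that $v\in\mathring{\mV}^{1}_{0}(\Omega)\subset\mathring{\mV}^{1}_{\beta}(\Omega)$. Next I would check that $\mathfrak{L}_{\beta}v\in\mathring{\mV}^{1}_{\beta}(\Omega)^{*}$: indeed $\mathfrak{L}_{\beta}v$ acts by $v'\mapsto(\sigma^{0}\nabla v,\nabla v')_{\mL^{2}(\Omega)}=-(\,\div(\sigma^{0}\nabla v),v'\,)_{\mL^{2}(\Omega)}$ after integration by parts (legitimate since the Dirichlet condition kills the boundary term and $\div(\sigma^{0}\nabla v)\in\mL^{2}(\Omega)$); bounding $|(\,\div(\sigma^{0}\nabla v),v'\,)_{\mL^{2}(\Omega)}|$ requires controlling $\|v'\|_{\mL^{2}(\Omega)}$ by $\|v'\|_{\mV^{1}_{\beta}(\Omega)}$, which holds because for $\beta\ge 0$ the weight $r^{2\beta}$ near $O$ is bounded and away from $O$ the $\mV^1_\beta$-norm controls the $\mH^1$-norm, hence the $\mL^2$-norm, of $v'$. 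Thus Proposition \ref{WeightExp} applies and yields $v=c_{+}\sgp+c_{-}\sgm+\tilde v$ with $\tilde v\in\mathring{\mV}^{1}_{-\beta}(\Omega)$. The key step is then to show $c_{+}=c_{-}=0$: since $v\in\mH^{1}_{0}(\Omega)$ has finite $\mH^1$-norm, whereas $\sgpm(r,\theta)=\psi(r)r^{\pm i\mu}\phi(\theta)$ satisfies $|\nabla\sgpm|\sim r^{-1}$ near $O$ so that $\|\nabla\sgpm\|_{\mL^{2}(\mrm{D}(O,1))}^{2}$ diverges like $\int_{0}r^{-1}\,dr$, and since $\tilde v\in\mathring{\mV}^{1}_{-\beta}(\Omega)\subset\mathring{\mV}^{1}_{0}(\Omega)=\mH^{1}_{0}(\Omega)$ is also of finite energy, any nonzero coefficient $c_{\pm}$ would force $\|\nabla v\|_{\mL^{2}(\Omega)}=\infty$ — here one uses that $\sgp$ and $\sgm$ are linearly independent modulo $\mathring{\mV}^{1}_{-\beta}(\Omega)$ with infinite-energy singular parts, so no cancellation between them is possible. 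Hence $v=\tilde v\in\mathring{\mV}^{1}_{-\beta}(\Omega)$ for every $\beta\in(0;2)$; applying this with $\beta=1$ gives $v\in\mathring{\mV}^{1}_{-1}(\Omega)$, completing the inclusion.

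It remains to upgrade the membership $v\in\mathring{\mV}^{1}_{-1}(\Omega)$, for $v=\tilde v$, to the quantitative estimate \eqref{estimAPdomainReg}. I would invoke estimate \eqref{estimPropositionFFF} of Proposition \ref{FredholmnessFarField} with $\beta=-1$ (admissible since no element of $\Lambda$ has real part $-1$, the only purely imaginary exponents being $\pm i\mu$ and the real ones lying in $2\Z\setminus\{0\}$): this gives
\[
\|v\|_{\mV^{1}_{-1}(\Omega)}\le C\big(\|\mathfrak{L}_{-1}v\|_{\mathring{\mV}^{1}_{1}(\Omega)^{*}}+\|v\|_{\mL^{2}(\Omega\setminus\overline{\mrm{D}(O,1)})}\big).
\]
Finally I would bound $\|\mathfrak{L}_{-1}v\|_{\mathring{\mV}^{1}_{1}(\Omega)^{*}}$ by $\|\mathfrak{A}v\|_{\mL^{2}(\Omega)}$: for $v'\in\mathring{\mV}^{1}_{1}(\Omega)$, integrating by parts as above gives $\langle\mathfrak{L}_{-1}v,v'\rangle_{\Omega}=-(\mathfrak{A}v,v')_{\mL^{2}(\Omega)}$, and $|(\mathfrak{A}v,v')_{\mL^{2}(\Omega)}|\le\|\mathfrak{A}v\|_{\mL^{2}(\Omega)}\|v'\|_{\mL^{2}(\Omega)}\le C\,\|\mathfrak{A}v\|_{\mL^{2}(\Omega)}\|v'\|_{\mV^{1}_{1}(\Omega)}$, using the Hardy-type inequality $\|r^{-1}v'\|_{\mL^{2}(\Omega)}\le c\,\|\nabla v'\|_{\mL^{2}(\Omega)}$ valid on $\mathring{\mV}^{1}_{0}(\Omega)\supset\mathring{\mV}^{1}_{1}(\Omega)$ (quoted in the excerpt from \cite[Vol.\,1]{MaNP00}) to dominate $\|v'\|_{\mL^2}$ by $\|v'\|_{\mV^1_1}$. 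Combining these inequalities yields \eqref{estimAPdomainReg}. The main obstacle I anticipate is the vanishing-of-coefficients step: one must argue carefully that finite $\mH^1$-energy is genuinely incompatible with a nonzero singular component, which relies on the precise logarithmic divergence of $\int r^{-1}\,dr$ near $O$ and on the independence of the two oscillating singularities modulo the remainder space; everything else is a routine chain of functional-analytic estimates.
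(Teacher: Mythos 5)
Your proposal is correct and follows essentially the same route as the paper: write the equation as $\mathfrak{L}_{+1}v=f$ with $f\in\mathring{\mV}^{1}_{1}(\Omega)^{*}$, apply Proposition \ref{WeightExp} with $\beta=1$, kill the coefficients $c_{\pm}$ because $\sgpm\notin\mathring{\mV}^{1}_{0}(\Omega)=\mH^{1}_{0}(\Omega)$, and then get (\ref{estimAPdomainReg}) from (\ref{estimPropositionFFF}) with $\beta=-1$ together with $\|\mathfrak{L}_{-1}v\|_{\mathring{\mV}^{1}_{1}(\Omega)^{*}}\le\|\mathfrak{A}v\|_{\mL^{2}(\Omega)}$. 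Only two cosmetic blemishes: the identity should read $\langle\mathfrak{L}_{-1}v,v'\rangle_{\Omega}=+(\mathfrak{A}v,v')_{\mL^{2}(\Omega)}$ (harmless, since you only use moduli), and the claim that $\mL^{2}(\Omega)\subset\mathring{\mV}^{1}_{\beta}(\Omega)^{*}$ for every $\beta\in(0;2)$ is not justified for $\beta>1$ — but this is irrelevant since the argument is run at $\beta=1$, exactly as in the paper.
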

\begin{proof} Let $v\in \mH^{1}_{0}(\Omega)=\mathring{\mV}^{1}_{0}(\Omega)\subset\mathring{\mV}^{1}_{1}(\Omega)$ be a function of $D(\mathfrak{A})$. We denote $f:=\mathfrak{A}v\in\mL^{2}(\Omega)$. Observing that $|(f,w)_{\mrm{L}^2(\Om)}|\le \|f\|_{\mL^{2}(\Omega)}\|w\|_{\mL^{2}(\Omega)}\le \|f\|_{\mL^{2}(\Omega)}\|w\|_{\mV^{1}_{1}(\Omega)}$, we deduce that 
$f\in\mathring{\mV}^{1}_{1}(\Omega)^{*}$ with $\|f\|_{\mathring{\mV}^{1}_{1}(\Omega)^{*}}\le \|f\|_{\mL^{2}(\Omega)}$. Therefore, the equation $\mathfrak{A}v=f$ also writes $\mathfrak{L}_{+1}v=f$. The application of Proposition \ref{WeightExp} for $\beta=1$ implies 
that there exist $c_{\pm}\in \Cplx$ and $\tilde{v}\in\mathring{\mV}^{1}_{-1}(\Omega)$ such that $v = c_{+}\sgp+c_{-}\sgm+\tilde{v}$. Since $\sgp$ and $\sgm$ are two linearly independent elements of 
$\mathring{\mV}^{1}_{1}(\Omega)\setminus\mathring{\mV}^{1}_{0}(\Omega)$ and since $v\in\mathring{\mV}^{1}_{0}(\Omega)$, we deduce successively that $c_{\pm}=0$ and $v\in\mathring{\mV}^{1}_{-1}(\Omega)$. Finally, we obtain (\ref{estimAPdomainReg}) applying (\ref{estimPropositionFFF}) for $\beta=-1$. \end{proof}

\noindent The operator $\mathfrak{A}$ is symmetric. From the point of view 
of spectral analysis, it would be desirable to determine whether it is self-adjoint. The following result is established in \cite[Chap.\,7]{Ramd99}.

\begin{proposition}\label{Adjoint}
The domain of the operator $\mathfrak{A}^{*}$ is given by $D(\mathfrak{A}^{*}) = \mrm{span}\{\sgp,\sgm\}\oplus D(\mathfrak{A})$. 
\end{proposition}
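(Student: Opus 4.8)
The plan is to prove the two inclusions separately; $\supseteq$ is elementary, while $\subseteq$ is the substantial part (the detailed argument is carried out in \cite[Chap.\,7]{Ramd99}).

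\emph{The inclusion $\mrm{span}\{\sgp,\sgm\}+D(\mathfrak{A})\subseteq D(\mathfrak{A}^{*})$.} Since $\mathfrak{A}$ is symmetric one has $D(\mathfrak{A})\subseteq D(\mathfrak{A}^{*})$, so it remains to check that $\sgpm\in D(\mathfrak{A}^{*})$. First I would compute $g_{\pm}:=-\mrm{div}(\sigma^{0}\nabla\sgpm)$: because $r^{\pm i\mu}\phi(\theta)$ solves the homogeneous transmission problem in the infinite cone and because inside $\mrm{D}(O,2)$ the interface is the radial segment $\{\theta=\pi/4\}$, on which $\partial_{\bfn}\psi=0$, the jump relations $[\sgpm]=0$ and $[\sigma^{0}\partial_{\bfn}\sgpm]=0$ hold on $\Sigma^{0}\setminus\{O\}$; hence $g_{\pm}=-2\sigma^{0}\nabla\psi\cdot\nabla(r^{\pm i\mu}\phi)-(r^{\pm i\mu}\phi)\,\mrm{div}(\sigma^{0}\nabla\psi)$ is a piecewise smooth function supported in $\{1\le r\le2\}$, in particular in $\mL^{2}(\Omega)$. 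Then, for $v\in D(\mathfrak{A})$, I would establish $(\mathfrak{A}v,\sgpm)_{\mL^{2}(\Omega)}=(v,g_{\pm})_{\mL^{2}(\Omega)}$ by integrating by parts twice on $\Omega\setminus\mrm{D}(O,\eps)$ and letting $\eps\to0$: the contributions on $\partial\Omega$ vanish ($v=\sgpm=0$), those on $\Sigma^{0}$ cancel (transmission relations for $v$ and for $\sgpm$), and those on $\partial\mrm{D}(O,\eps)$ tend to $0$ along a suitable sequence $\eps_{k}\to0$, using $v\in\mathring{\mV}^{1}_{-1}(\Omega)$ (Proposition \ref{propoDcpDA}) together with $|\sgpm|\le C$ and $r|\nabla\sgpm|\le C$ near $O$. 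Hence $\sgpm\in D(\mathfrak{A}^{*})$ with $\mathfrak{A}^{*}\sgpm=g_{\pm}$. The sum is direct: near $O$ a nontrivial combination $c_{+}\sgp+c_{-}\sgm$ equals $(c_{+}r^{i\mu}+c_{-}r^{-i\mu})\phi(\theta)$, whose Dirichlet integral over $\mrm{D}(O,1)\setminus\mrm{D}(O,\eps)$ diverges like $(|c_{+}|^{2}+|c_{-}|^{2})\,|\ln\eps|$ (the cross term staying bounded), so it does not lie in $\mH^{1}_{0}(\Omega)\supseteq D(\mathfrak{A})$, exactly as in the proof of Proposition \ref{propoDcpDA}.

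\emph{The inclusion $D(\mathfrak{A}^{*})\subseteq\mrm{span}\{\sgp,\sgm\}+D(\mathfrak{A})$.} Let $w\in D(\mathfrak{A}^{*})$ and $f:=\mathfrak{A}^{*}w\in\mL^{2}(\Omega)$, so $(\mathfrak{A}v,w)_{\mL^{2}(\Omega)}=(v,f)_{\mL^{2}(\Omega)}$ for all $v\in D(\mathfrak{A})$. Testing against $v\in\mathscr{C}^{\infty}_{0}(\Omega^{0}_{\pm})$ gives $-\sigma_{\pm}\Delta w=f$ in $\Omega^{0}_{\pm}$, hence $w\in\mH^{2}_{\loc}(\Omega^{0}_{\pm})$. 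The key point is to upgrade this to $w\in\mathring{\mV}^{1}_{1}(\Omega)$, which I would do in two parts. \emph{(a) Away from $O$.} Near every point of $\partial\Omega\setminus\{O\}$ and of $\Sigma^{0}\setminus\mathscr{S}$ the interface is smooth (or absent), meets $\partial\Omega$ orthogonally, and $\kappa_{\sigma}\neq-1$, so the limit problem is there a regular transmission problem; exploiting this (cf. \cite{BoDR99,CoSt85}) together with the identity $(\mathfrak{A}v,w)=(v,f)$ restricted to test functions $v\in D(\mathfrak{A})$ supported away from $O$, one gets $w\in\mH^{1}_{\loc}(\overline{\Omega}\setminus\{O\})$, $w=0$ on $\partial\Omega\setminus\{O\}$, and $[w]=[\sigma^{0}\partial_{\bfn}w]=0$ on $\Sigma^{0}\setminus\mathscr{S}$. \emph{(b) Near $O$.} Since in $\mrm{D}(O,2)$ the limit geometry is the self-similar cone $\{0<\theta<\pi/4\}\cup\{\pi/4<\theta<\pi\}$, rescaling the dyadic annulus $A_{j}:=\{2^{-j-1}<r<2^{-j}\}$ by $2^{j}$ maps $w|_{A_{j}}$ to a solution of the \emph{same} transmission problem on the fixed region $\{1/2<\rho<1,\ 0<\theta<\pi\}$ with right-hand side $2^{-2j}f(2^{-j}\cdot)$; the associated local a priori estimate, uniform in $j$ by scale invariance, reads on a slightly larger annulus $\widetilde{A}_{j}$: $\int_{A_{j}}|\nabla w|^{2}+2^{2j}\int_{A_{j}}|w|^{2}\le C\,(2^{-2j}\int_{\widetilde{A}_{j}}|f|^{2}+2^{2j}\int_{\widetilde{A}_{j}}|w|^{2})$. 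Multiplying by $r^{2}\simeq 2^{-2j}$ for the gradient term and by $1$ for the other, and summing over $j\ge1$ ($\widetilde{A}_{j}$ having bounded overlap), I would obtain $\int_{\mrm{D}(O,1/2)}(|w|^{2}+r^{2}|\nabla w|^{2})\le C(\|f\|^{2}_{\mL^{2}(\mrm{D}(O,1))}+\|w\|^{2}_{\mL^{2}(\mrm{D}(O,1))})<\infty$, i.e. $w\in\mV^{1}_{1}$ near $O$. Combined with (a), this gives $w\in\mathring{\mV}^{1}_{1}(\Omega)$.

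Once $w\in\mathring{\mV}^{1}_{1}(\Omega)$, Green's formula on $\Omega\setminus\mrm{D}(O,\eps)$ — the corner terms vanishing as $\eps\to0$ thanks to the dual weights of $v\in\mathring{\mV}^{1}_{-1}(\Omega)$ and $w\in\mathring{\mV}^{1}_{1}(\Omega)$ and to the relations in (a) — turns the adjoint identity into $(\sigma^{0}\nabla w,\nabla v)_{\mL^{2}(\Omega)}=(f,v)_{\mL^{2}(\Omega)}$ for all $v\in D(\mathfrak{A})$. Since $D(\mathfrak{A})$ is dense in $\mathring{\mV}^{1}_{-1}(\Omega)$ (a consequence of the Fredholmness of $\mathfrak{L}_{-1}$ and of the density of $\mL^{2}(\Omega)$ in $\mathring{\mV}^{1}_{1}(\Omega)^{*}$), and since $|(\sigma^{0}\nabla w,\nabla v)_{\mL^{2}(\Omega)}|=|(f,v)_{\mL^{2}(\Omega)}|\le\|f\|_{\mL^{2}(\Omega)}\|v\|_{\mV^{1}_{1}(\Omega)}$, it follows that $\mathfrak{L}_{1}w\in\mathring{\mV}^{1}_{1}(\Omega)^{*}$. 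Proposition \ref{WeightExp} with $\beta=1$ then provides $c_{\pm}\in\Cplx$ and $\tilde{w}\in\mathring{\mV}^{1}_{-1}(\Omega)$ with $w=c_{+}\sgp+c_{-}\sgm+\tilde{w}$; and since $\mrm{div}(\sigma^{0}\nabla\tilde{w})=-f+c_{+}g_{+}+c_{-}g_{-}\in\mL^{2}(\Omega)$ (by the first part), Proposition \ref{propoDcpDA} gives $\tilde{w}\in D(\mathfrak{A})$. Hence $w\in\mrm{span}\{\sgp,\sgm\}+D(\mathfrak{A})$, which concludes the proof. I expect the main obstacle to be the passage from the single datum $w\in\mL^{2}(\Omega)$ to $w\in\mathring{\mV}^{1}_{1}(\Omega)$: one must simultaneously recover, away from $O$, the hidden boundary and transmission conditions and the interior $\mH^{1}$-regularity (where $\kappa_{\sigma}\neq-1$ is used), and, near $O$, control the growth of $\nabla w$ through the rescaling argument (where the self-similarity of the limit geometry in $\mrm{D}(O,2)$ is used); everything else is routine once the weighted-space framework of \S\ref{StudySing} and Propositions \ref{FredholmnessFarField}–\ref{propoDcpDA} are available.
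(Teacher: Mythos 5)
The paper itself gives no proof of Proposition \ref{Adjoint}: it is quoted from \cite[Chap.\,7]{Ramd99}, where it is obtained from the general theory of operators with oscillating corner singularities (see also \cite[Chap.\,5]{NaPl94}). Your reconstruction is therefore a genuinely different, self-contained route built on the paper's own toolkit, and it is sound. The easy inclusion is handled correctly: $g_{\pm}=-\div(\sigma^{0}\nabla\sgpm)$ is indeed a piecewise smooth function supported in $\{1\le r\le 2\}$ (the interface is radial there and $\psi$ is radial, so no surface distribution appears), the $\partial\mrm{D}(O,\eps)$ terms vanish along a sequence thanks to $v\in\mathring{\mV}^{1}_{-1}(\Omega)$, and the logarithmic divergence of the Dirichlet integral gives directness of the sum. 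For the hard inclusion, your plan — recover regularity and the hidden boundary/transmission relations away from $O$, prove $w\in\mathring{\mV}^{1}_{1}(\Omega)$ by dyadic rescaling, pass to the variational identity, and then invoke Propositions \ref{WeightExp} ($\beta=1$) and \ref{propoDcpDA} — is exactly the Kondratiev-style argument one expects, and your weight bookkeeping is right: the rescaled estimate does sum to the $\mV^{1}_{1}$ bound, and the corner terms in Green's formula vanish along a sequence by Cauchy--Schwarz on the dual weights of $v\in\mathring{\mV}^{1}_{-1}$ and $w\in\mathring{\mV}^{1}_{1}$. Two steps, however, remain assertions and carry most of the technical load: step (a), where the sole adjoint identity must yield $w\in\mH^{1}_{\loc}(\overline{\Om}\setminus\{O\})$ together with $w=0$ on $\partial\Om\setminus\{O\}$ and the two transmission relations — this rests on the local ellipticity of the flat-interface and orthogonal-contact configurations for $\kappa_{\sigma}\neq-1$, which is indeed what \cite{CoSt85,BoDR99} provide but which you should state as the precise ingredient; and the uniform local $\mH^{1}$ estimate on the rescaled annulus, which for sign-changing $\sigma$ is not a Caccioppoli inequality (the naive test-function argument fails without coercivity) but a local T-coercivity estimate near the straight interface, again valid because $\kappa_{\sigma}\neq-1$. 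The parenthetical density of $D(\mathfrak{A})$ in $\mathring{\mV}^{1}_{-1}(\Om)$ also deserves a line (closed range of $\mathfrak{L}_{-1}$ plus a finite-dimensional complement of the range chosen inside $\mL^{2}(\Om)$), though it is provable exactly from the two facts you name. With these points expanded, your proof is complete, and it has the advantage over the paper's citation of using only Propositions \ref{FredholmnessFarField}--\ref{propoDcpDA}.
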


\noindent 
Since $D(\mathfrak{A}^{*}) \ne D(\mathfrak{A})$, this shows that $\mathfrak{A}$ cannot be self-adjoint. Actually, it is possible to completely describe all the self-adjoint extensions of this operator (for the general theory, see e.g. \cite[Sect.\,X.1]{ReSi75}). To do so, we begin by introducing a classical tool (see \cite[Chap.\,5]{NaPl94}), namely the sesquilinear form $q(\cdot,\cdot)$ such that 
\begin{equation}\label{Simplectic}
q(u,v) := (\mathfrak{A}^{*}u,v)_{\mL^{2}(\Omega)} - (u,\mathfrak{A}^{*}v)_{\mL^{2}(\Omega)},\quad 
\forall u,v\in D(\mathfrak{A}^{*}).
\end{equation}
The form $q(\cdot,\cdot)$ is said to be anti-hermitian, or equivalently, symplectic, because it verifies $q(u,v)=-\overline{q(v,u)}$ for all $u,v\in D(\mathfrak{A}^{*})$.
\begin{proposition}\label{PropositionPrtiesFormeq}
The anti-hermitian form $q(\cdot,\cdot)$ satisfies the following properties:\\[3pt]
\begin{tabular}{ll}
i) & $q(u,v)=q(v,u)=0$, for all $u\in D(\mathfrak{A}^{\ast})$, $v\in D(\mathfrak{A})$;\\
ii) & $q(\sgp,\sgm)=q(\sgm,\sgp)=0$;\\
iii) & $q(\sgp,\sgp)=-q(\sgm,\sgm)\ne0$.
\end{tabular}
\end{proposition}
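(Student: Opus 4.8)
The plan is to compute $q(\cdot,\cdot)$ on the generating singularities $\sgpm$ by reducing the $\mL^2$-pairings in (\ref{Simplectic}) to a boundary term on a small circle around $O$, then evaluate that boundary term explicitly using the separated-variables form (\ref{DefSing}), (\ref{SingularExponents2}) of $\sgpm$ together with the normalization $\mu\int_0^\pi\sigma^0(\theta)\phi(\theta)^2\,d\theta=1$. Property i) is the cleanest part: for $v\in D(\mathfrak{A})$ one has $\mathfrak{A}^\ast v=\mathfrak{A}v=-\div(\sigma^0\nabla v)$, and since both $u$ and $v$ then belong to $\mathring{\mV}^1_{-1}(\Omega)\cup(\mathring{\mV}^1_{-1}(\Omega)+\mrm{span}\{\sgpm\})$, the Green formula $(\mathfrak{A}^\ast u,v)_{\mL^2}-(u,\mathfrak{A}v)_{\mL^2}$ produces, after truncation at radius $\rho$ and passage $\rho\to0$, an integral $\int_{\partial\mrm{D}(O,\rho)}\sigma^0(\partial_r u\,\overline v-u\,\partial_r\overline v)\,ds$; because $v$ lies in a space with weight $\beta=-1$ at $O$ while $u$ grows no faster than $r^{\pm i\mu}$ (bounded), the integrand is $O(\rho)$ and the limit vanishes. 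By the anti-hermitian symmetry $q(u,v)=-\overline{q(v,u)}$ this also gives $q(v,u)=0$.

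For ii) and iii), the key computation is the limit $\rho\to0$ of $\int_{\partial\mrm{D}(O,\rho)}\sigma^0(\partial_r u\,\overline v-u\,\partial_r\overline v)\,ds$ with $u,v\in\{\sgp,\sgm\}$. On $\mrm{D}(O,1)$ the cut-off $\psi$ equals $1$, so there $\sgpm=r^{\pm i\mu}\phi(\theta)$, and $\sigma^0$ depends only on $\theta$. Writing $u=r^{a}\phi(\theta)$, $v=r^{b}\phi(\theta)$ with $a,b\in\{i\mu,-i\mu\}$, the radial derivatives pull out powers of $\rho$ and the angular integral becomes $\int_0^\pi\sigma^0(\theta)\phi(\theta)^2\,d\theta$, so the boundary term equals $\rho^{\,a+\overline b}\,(a-\overline b)\int_0^\pi\sigma^0\phi^2\,d\theta$ (the $ds=\rho\,d\theta$ combines with the $1/r$ from differentiating to cancel the extra power). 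For $(u,v)=(\sgp,\sgm)$ we have $a=i\mu$, $\overline b=\overline{-i\mu}=i\mu$, so $a-\overline b=0$ and the term vanishes identically in $\rho$, giving $q(\sgp,\sgm)=0$; by symplectic symmetry $q(\sgm,\sgp)=0$ too, which is ii). For $(u,v)=(\sgp,\sgp)$ we get $a=\overline b=i\mu$ hmm — wait, $\overline b=\overline{i\mu}=-i\mu$, so $a+\overline b=0$ (the $\rho$-power is constant, good) and $a-\overline b=2i\mu$, so $q(\sgp,\sgp)=2i\mu\int_0^\pi\sigma^0\phi^2\,d\theta=2i$ by the normalization; symmetrically $q(\sgm,\sgm)=-2i\mu\int_0^\pi\sigma^0\phi^2\,d\theta=-2i$, whence $q(\sgp,\sgp)=-q(\sgm,\sgm)=2i\neq0$, which is iii). (The precise nonzero constant is not needed, only its non-vanishing, which is exactly what the positivity $\int_0^\pi\sigma^0\phi^2\,d\theta>0$ from \cite[Lem.\,A.2]{BoCC13} guarantees.)

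The main obstacle is making the truncation-and-limit argument rigorous: one must justify that $\mathfrak{A}^\ast\sgpm\in\mL^2(\Omega)$ (indeed $\div(\sigma^0\nabla\sgpm)$ is supported where $\psi$ varies, i.e. away from $O$, plus a contribution that is square-integrable because $r^{\pm i\mu}$ is bounded), so that the pairings in (\ref{Simplectic}) are well defined; and one must control the cross terms coming from the cut-off $\psi$ — but these are supported in the annulus $1\le r\le 2$, away from $O$, where both $\sgpm$ and their images under $\mathfrak{A}^\ast$ are smooth, so they contribute equally to $(\mathfrak{A}^\ast u,v)_{\mL^2}$ and $(u,\mathfrak{A}^\ast v)_{\mL^2}$ and cancel in the difference. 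The remaining care is purely in the Green-formula bookkeeping near $O$; a convenient way to organize it is to apply Green's formula on $\Omega\setminus\overline{\mrm{D}(O,\rho)}$, using that $\sigma^0\partial_{\bfn^0}$ transmission conditions are satisfied across $\Sigma^0$ so no interface term appears, and then let $\rho\to0$.
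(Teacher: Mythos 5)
Your proposal is correct in substance but follows a genuinely different, computational route. The paper argues softly: item i) is nothing but the defining identity of the adjoint, $(\mathfrak{A}^{*}u,v)_{\mL^{2}(\Om)}=(u,\mathfrak{A}v)_{\mL^{2}(\Om)}$ for $v\in D(\mathfrak{A})$, so no Green formula is needed; item ii) and the identity $q(\sgp,\sgp)=-q(\sgm,\sgm)$ follow from $\overline{\sgp}=\sgm$ and the realness of $\sigma^{0}$; and the non-vanishing in iii) is obtained by contradiction: if $q(\sgp,\sgp)=0$ then, combining with i) and ii), $q$ would vanish identically on $D(\mathfrak{A}^{*})$, so $\mathfrak{A}^{*}$ would be self-adjoint, contradicting Proposition \ref{Adjoint}. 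Your route instead evaluates $q$ on $\mrm{span}\{\sgp,\sgm\}$ by integration by parts on $\Om\setminus\overline{\mrm{D}(O,\rho)}$ and the explicit form (\ref{DefSing})--(\ref{SingularExponents2}); this is more work, but it buys the exact value $q(\sgpm,\sgpm)=\pm 2i\mu\int_{0}^{\pi}\sigma^{0}\phi^{2}\,d\theta=\pm 2i$ (up to the orientation convention), which the paper never needs. Your boundary-term formula $(a-\overline{b})\,\rho^{a+\overline{b}}\int_{0}^{\pi}\sigma^{0}\phi^{2}\,d\theta$ is right, and the normalization together with \cite[Lem.\,A.2]{BoCC13} indeed gives the non-vanishing, so ii) and iii) come out correctly.

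Two steps need tightening. First, in i) the assertion that the circle integrand is $O(\rho)$ does not follow from $v\in D(\mathfrak{A})\subset\mathring{\mV}^{1}_{-1}(\Om)$: weighted $\mL^{2}$ membership controls traces on circles only in an averaged sense, so you must either pass to the limit along a suitable sequence $\rho_{n}\to 0$ (chosen so that the angular $\mL^{2}$ norms of $r^{-2}v$ and $r^{-1}\nabla v$ at $r=\rho_{n}$ tend to zero), or, much simpler, just invoke the adjoint identity above, which settles i) in one line. Second, the remark that the cut-off contributions ``contribute equally to $(\mathfrak{A}^{*}u,v)$ and $(u,\mathfrak{A}^{*}v)$ and cancel in the difference'' is false as stated: for $u=v=\sgp$ the two integrals over the annulus $1\le r\le 2$ are complex conjugates, not equal, and their difference is precisely $q(\sgp,\sgp)\neq 0$. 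What actually cancels, once you integrate by parts on $\Om\setminus\overline{\mrm{D}(O,\rho)}$ as you propose at the end (Dirichlet condition on $\partial\Om$, transmission conditions across $\Sigma^{0}$), are the two symmetric forms $\int\sigma^{0}\nabla u\cdot\nabla\overline{v}\,d\bfx$, leaving exactly the circle term you compute; so keep that organization and drop the misleading cancellation remark.
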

\begin{proof} Item $i)$ comes from the very definition of $\mathfrak{A}^{*}$, and the fact that $\mathfrak{A}\subset \mathfrak{A}^{*}$. One establishes $ii)$ noticing that $\overline{\sgp} = \sgm$. Finally, one observes that, necessarily, there holds $q(\sgp,\sgp)=-q(\sgm,\sgm)\neq 0$, 
otherwise we would have $q(w,\tilde{w}) = 0$, for all  $w,\tilde{w}\in D(\mathfrak{A}^{*})$, 
\textit{i.e.} $\mathfrak{A}^{*}$ would be self-adjoint which is impossible according to 
Proposition \ref{Adjoint}.
\end{proof}

\noindent Now, we are able to demonstrate the following result.
\begin{proposition}\label{DescribingSAE}
The self-adjoint extensions of  $\mathfrak{A}$ are the unbounded operators $\mathfrak{A}(\tau)$, $\tau\in\R$, such that $\mathfrak{A}(\tau):D(\mathfrak{A}(\tau))\to\mL^{2}(\Omega)$ is defined by 
\begin{equation}\label{definitionOperateurAtau}
\begin{array}{|l}
\mathfrak{A}(\tau)v\;=\; -\mrm{div}(\sigma^{0}\nabla v)\\[6pt]
D(\mathfrak{A}(\tau))  = \mrm{span}\{
\sgp + e^{i\tau}\sgm \}\oplus D(\mathfrak{A}).
\end{array}
\end{equation}
\end{proposition}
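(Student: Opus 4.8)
The plan is to use the standard theory of self-adjoint extensions of symmetric operators with finite deficiency indices, specialised to the symplectic/Lagrangian description encoded in the form $q(\cdot,\cdot)$ of Proposition \ref{PropositionPrtiesFormeq}. First I would record that, by Proposition \ref{Adjoint}, $D(\mathfrak{A}^{*})/D(\mathfrak{A})$ is two-dimensional, spanned by the classes of $\sgp$ and $\sgm$; hence $\mathfrak{A}$ has deficiency indices $(1,1)$ and its self-adjoint extensions are parametrised by a one-real-parameter family. The general principle (see \cite[Chap.\,5]{NaPl94}, \cite[Sect.\,X.1]{ReSi75}) is that an extension $\mathfrak{B}$ with $\mathfrak{A}\subset\mathfrak{B}\subset\mathfrak{A}^{*}$ is self-adjoint if and only if its domain $D(\mathfrak{B})$, viewed modulo $D(\mathfrak{A})$, is a maximal subspace of $D(\mathfrak{A}^{*})$ on which $q$ vanishes identically, i.e. a Lagrangian subspace for the symplectic form $q$ on the two-dimensional quotient $D(\mathfrak{A}^{*})/D(\mathfrak{A})$.

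Next I would classify those Lagrangian subspaces. Any one-dimensional (hence, in this $2$-dimensional symplectic space, maximal isotropic) subspace modulo $D(\mathfrak{A})$ is spanned by a vector $w = \alpha\,\sgp + \beta\,\sgm$ with $(\alpha,\beta)\ne(0,0)$. Using Proposition \ref{PropositionPrtiesFormeq}, one computes
\[
q(w,w) = |\alpha|^{2} q(\sgp,\sgp) + |\beta|^{2} q(\sgm,\sgm) = \big(|\alpha|^{2}-|\beta|^{2}\big)\,q(\sgp,\sgp),
\]
the cross terms vanishing by item $ii)$ and the relation $q(\sgm,\sgm)=-q(\sgp,\sgp)$ coming from item $iii)$. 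Since $q(\sgp,\sgp)\ne0$, the isotropy condition $q(w,w)=0$ is exactly $|\alpha|=|\beta|$. Discarding the degenerate cases $\alpha=0$ or $\beta=0$ (which would force the other to vanish), we may normalise $\alpha=1$, so $\beta = e^{i\tau}$ for some $\tau\in\R$, and $\tau$ is determined modulo $2\pi$. This yields precisely the family of domains $D(\mathfrak{A}(\tau)) = \mrm{span}\{\sgp+e^{i\tau}\sgm\}\oplus D(\mathfrak{A})$, and conversely each such subspace is isotropic, hence (being one-dimensional modulo $D(\mathfrak{A})$ in a $2$-dimensional symplectic quotient) maximal isotropic, so each $\mathfrak{A}(\tau)$ is self-adjoint.

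It remains to check that every self-adjoint extension is of this form and that no two distinct $\tau$'s give the same operator. For the first point, a self-adjoint extension must have domain strictly larger than $D(\mathfrak{A})$ and strictly smaller than $D(\mathfrak{A}^{*})$ (otherwise $\mathfrak{A}$ or $\mathfrak{A}^{*}$ would itself be self-adjoint, contradicting Proposition \ref{Adjoint}); hence its domain is exactly $\mrm{span}\{w\}\oplus D(\mathfrak{A})$ for some $w=\alpha\sgp+\beta\sgm$, and self-adjointness forces $q$ to vanish on it — both $q(w,w)=0$ by the computation above and $q(w,v)=0$ for $v\in D(\mathfrak{A})$ automatically by item $i)$ — so $|\alpha|=|\beta|$ and we are in the family. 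For injectivity in $\tau$, if $\sgp+e^{i\tau}\sgm$ and $\sgp+e^{i\tau'}\sgm$ span the same line modulo $D(\mathfrak{A})$, their difference $(e^{i\tau}-e^{i\tau'})\sgm$ lies in $D(\mathfrak{A})$, which forces $e^{i\tau}=e^{i\tau'}$ since $\sgm\notin D(\mathfrak{A})$. The only genuine subtlety — the step I expect to be the main obstacle, though it is more bookkeeping than difficulty — is making rigorous that self-adjoint extensions of $\mathfrak{A}$ correspond bijectively to Lagrangian subspaces of the quotient with respect to $q$; this is the content of the abstract boundary-triple theory and one should either cite it directly from \cite[Chap.\,5]{NaPl94}/\cite[Sect.\,X.1]{ReSi75} or sketch the short argument that $\mathfrak{B}=\mathfrak{B}^{*}$ is equivalent to $D(\mathfrak{B})^{\perp_{q}}=D(\mathfrak{B})$ modulo $D(\mathfrak{A})$, using that $D(\mathfrak{A})^{\perp_{q}}=D(\mathfrak{A}^{*})$ by definition of the adjoint.
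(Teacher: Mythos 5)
Your proposal is correct and follows essentially the same route as the paper: both arguments rest on Proposition \ref{PropositionPrtiesFormeq} and the computation $q(\alpha\sgp+\beta\sgm,\alpha\sgp+\beta\sgm)=(\vert\alpha\vert^{2}-\vert\beta\vert^{2})\,q(\sgp,\sgp)$, together with the strictness of the inclusions $\mathfrak{A}\subset\mathscr{A}\subset\mathfrak{A}^{*}$. The only difference is one of packaging: where you invoke (or propose to sketch) the abstract correspondence between self-adjoint extensions and maximal isotropic subspaces of $D(\mathfrak{A}^{*})/D(\mathfrak{A})$, the paper carries out that equivalence by hand, checking directly that any $u\in D(\mathscr{A}^{*})$ with $u=c_{+}\sgp+c_{-}\sgm+\tilde{u}$ satisfies $q(u,\alpha_{+}\sgp+\alpha_{-}\sgm)=0$ and hence lies in $D(\mathscr{A})$ — which is exactly the short argument you flag as the remaining step.
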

\begin{proof} Suppose first that  $\mathscr{A}:D(\mathscr{A})\to \mL^{2}(\Omega)$ is a self-adjoint extension 
of $\mathfrak{A}$. We have $\mathfrak{A}\subset \mathscr{A}\subset \mathfrak{A}^{*}$ and the 
inclusions are strict, otherwise $\mathscr{A} = \mathfrak{A}$ or $\mathscr{A} = \mathfrak{A}^{*}$  
(since $\mrm{dim}(D(\mathfrak{A}^{*})/D(\mathfrak{A})) = 2$) and then $\mathscr{A}$ would not 
be self-adjoint. Due to Proposition \ref{Adjoint}, there exist fixed constants $\alpha_{\pm}\in \Cplx$ 
such that $D(\mathscr{A})  = \mrm{span}\{\alpha_{+}\sgp + \alpha_{-}\sgm\}\oplus D(\mathfrak{A})$. Set for a moment $\sg_{\alpha} = \alpha_{+}\sgp + \alpha_{-}\sgm$. If the operator $\mathscr{A}$ is self-adjoint, it is in particular symmetric. Since $\mathscr{A}\subset \mathfrak{A}^{*}$, a necessary and sufficient condition for $\mathscr{A}$ to be symmetric is that $q(u,v) = 0$, for all $u,v\in 
D(\mathscr{A})$. Take two arbitrary elements 
$u,v\in D(\mathscr{A})$, so that there exist $\tilde{u}, \tilde{v}\in D(\mathfrak{A})$, and 
$c_{u},c_{v}\in\Cplx$ such that $u = c_{u}\,\sg_{\alpha} + \tilde{u}$ and $v = c_{v}\,\sg_{\alpha} + \tilde{v}$. The symmetry of $\mathscr{A}$ and Proposition \ref{PropositionPrtiesFormeq} impose
\[
0=q(u,v) = c_{u}\,\overline{c}_{v}\;q(\sg_{\alpha},\sg_{\alpha}),\qquad 
\forall u,v\in D(\mathscr{A}).
\]
This is true if and only if $0 = q(\sg_{\alpha},\sg_{\alpha}) = q(\sgp, \sgp)(\vert \alpha_{+}\vert^{2} - \vert \alpha_{-}\vert^{2})\Leftrightarrow\vert \alpha_{+}\vert = \vert \alpha_{-}\vert$ ($\neq 0$). To briefly sum up, 
if $\mathscr{A}$ is a self-adjoint extension of $\mathfrak{A}$, then we have
\begin{equation}\label{CondSelfAdjoint}
\begin{array}{|l}
\mathscr{A} v = -\mrm{div}(\sigma^{0}\nabla v)\\[6pt]
D(\mathscr{A}) = \mrm{span}\{\alpha_{+}\sgp +\alpha_{-}\sgm\}\oplus D(\mathfrak{A}),\qquad\mbox{ where }\vert \alpha_{+}\vert =  \vert \alpha_{-}\vert\;\neq 0.
\end{array}
\end{equation}
Now, let us consider an operator $\mathscr{A}$ which satisfies (\ref{CondSelfAdjoint}). Let us prove that 
$\mathscr{A}$ is a self-adjoint extension of $\mathfrak{A}$. What precedes shows that 
the condition $\vert\alpha_{-}\vert = \vert\alpha_{+}\vert$ implies symmetry of $\mathscr{A}$, so we 
only have to establish that $D(\mathscr{A}^{*}) = D(\mathscr{A})$. Take $u\in D(\mathscr{A}^{*})$. 
Since $D(\mathscr{A}^{*})\subset D(\mathfrak{A}^{*})$, there exist $\tilde{u}\in D(\mathfrak{A})$ and 
$c_{\pm}\in \Cplx$ such that $u = c_{+}\sgp + c_{-}\sgm + \tilde{u}$. The symmetry of $\mathscr{A}$ allows to write
\[
0 = q(u,\alpha_{+}\sgp+\alpha_{-}\sgm) = q(\sgp,\sgp)(c_{+}\overline{\alpha}_{+} - c_{-}\overline{\alpha}_{-}).
\]
We deduce that $c_{-} = c_{+}\overline{\alpha}_{+}/\overline{\alpha}_{-} = 
c_{+} \alpha_{-}/\alpha_{+}$ (remember that $\vert \alpha_{-}/\alpha_{+}\vert = 1$). We finally conclude that
$u = c_{+}/\alpha_{+}(\alpha_{+}\sgp+\alpha_{-}\sgm) + \tilde{u}\in D(\mathscr{A})$. This shows that 
(\ref{CondSelfAdjoint}) are actually necessary and sufficient conditions of self-adjointness.
It takes elementary calculus to check that this is equivalent to what is announced in the statement 
of the proposition. \end{proof}

\noindent The anti-hermitian form $q(\cdot,\cdot):
D(\mathfrak{A}^{*})^{2}\to \Cplx$ introduced in (\ref{Simplectic}) will play an important role in the sequel. We shall also
consider the linear forms $\pi_{\pm}:D(\mathfrak{A}^{*})\to \Cplx$ such that
\begin{equation}\label{defintionPipm}
\pi_{+}(v) := \frac{q(v,\sgp)}{q(\sgp,\sgp)}\;,\quad 
\pi_{-}(v) := \frac{q(v,\sgm)}{q(\sgm,\sgm)},\qquad \forall v\in D(\mathfrak{A}^{*}).
\end{equation}
Clearly, the forms $\pi_{\pm}$ are continuous with respect to the norm of the graph of $\mathfrak{A}^{*}$: 
for all $v\in D(\mathfrak{A}^{*})$, $\vert  \pi_{\pm}(v)\vert\leq C\,(\Vert v\Vert_{\mL^{2}(\Omega)}+\Vert 
\mathfrak{A}^{*}v\Vert_{\mL^{2}(\Omega)})$. With these 
maps, we have the decomposition
\begin{equation}\label{DecSgPart}
v - (\pi_{+}(v)\sgp +\pi_{-}(v)\sgm)\;\in D(\mathfrak{A})
,\qquad\forall v\in D(\mathfrak{A}^{*}).
\end{equation}
To prove (\ref{DecSgPart}), it suffices to notice that if $v=c_+\sgp+c_-\sgm+\tilde{v}$, with $c_{\pm}\in\Cplx$, $\tilde{v}\in D(\mathfrak{A})$, is an element of $D(\mathfrak{A}^{*})$, then there holds, according to Proposition \ref{PropositionPrtiesFormeq}, $q(v,\sgpm)=c_+q(\sgp,\sgpm)+c_-q(\sgm,\sgpm)+q(\tilde{v},\sgpm)=c_{\pm}q(\sgpm,\sgpm)$. The functionals $\pi_{\pm}$ can be exploited to express in a convenient manner  
a result established in \cite{BoCC13} concerning the kernels of operators $\mathfrak{L}_{\pm 1}$ (the operators $\mathfrak{L}_{\beta }$ defined in (\ref{defOpLbeta}) with $\beta=\pm1$). 

\begin{proposition}\label{KernelGen}
There exists a unique (modulo $\Ker\,\mathfrak{L}_{-1}$) element $\zeta\in\Ker\,\mathfrak{L}_{+1}\setminus \Ker\,\mathfrak{L}_{-1}$
satisfying $\pi_{-}(\zeta) = 1$. This function is such that $\Ker\,\mathfrak{L}_{+1} = 
\mrm{span}\{\zeta\}\oplus\Ker\,\mathfrak{L}_{-1}$ and there holds $\vert\pi_{+}(\zeta)\vert = 1$.
\end{proposition}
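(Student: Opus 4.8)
The plan is to transport the statement to the symplectic form $q(\cdot,\cdot)$ of Proposition~\ref{PropositionPrtiesFormeq}, using $\pi_{+},\pi_{-}$ from (\ref{defintionPipm}) as linear coordinates on the singular parts. First I would check that $\Ker\,\mathfrak{L}_{+1}=\Ker\,\mathfrak{A}^{*}$ and $\Ker\,\mathfrak{L}_{-1}=\Ker\,\mathfrak{A}$. If $u\in\Ker\,\mathfrak{L}_{+1}$, then $\mathfrak{L}_{+1}u=0\in\mathring{\mV}^{1}_{1}(\Omega)^{*}$, so Proposition~\ref{WeightExp} with $\beta=1$ gives $u=c_{+}\sgp+c_{-}\sgm+\tilde{u}$ with $\tilde{u}\in\mathring{\mV}^{1}_{-1}(\Omega)\subset\mH^{1}_{0}(\Omega)$; since $r^{\pm i\mu}\phi$ solves the homogeneous transmission problem, $\div(\sigma^{0}\nabla\sgpm)$ reduces to a bounded function supported in $\{1\le r\le2\}$ coming from the radial cut-off $\psi$, so $\div(\sigma^{0}\nabla\tilde{u})=\div(\sigma^{0}\nabla u)-c_{+}\div(\sigma^{0}\nabla\sgp)-c_{-}\div(\sigma^{0}\nabla\sgm)\in\mL^{2}(\Omega)$, whence $\tilde{u}\in D(\mathfrak{A})$ by Proposition~\ref{propoDcpDA} and $u\in D(\mathfrak{A}^{*})$ with $\mathfrak{A}^{*}u=-\div(\sigma^{0}\nabla u)=0$ by Proposition~\ref{Adjoint}; the reverse inclusion is an integration by parts, valid because elements of $\mathring{\mV}^{1}_{-1}(\Omega)$ decay fast enough at $O$. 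The same reasoning (testing against $\mathscr{C}^{\infty}_{0}(\Omega)$) yields $\Ker\,\mathfrak{L}_{-1}=\Ker\,\mathfrak{A}$, and since $D(\mathfrak{A})\subset\mathring{\mV}^{1}_{-1}(\Omega)$ carries no $\sgpm$-part, (\ref{DecSgPart}) gives $\Ker\,\mathfrak{L}_{-1}=\{u\in\Ker\,\mathfrak{L}_{+1}\,\vert\,\pi_{+}(u)=\pi_{-}(u)=0\}$.

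The core of the proof is then a two-line computation. For $u,v\in\Ker\,\mathfrak{L}_{+1}\subset\Ker\,\mathfrak{A}^{*}$, definition (\ref{Simplectic}) gives $q(u,v)=(\mathfrak{A}^{*}u,v)_{\mL^{2}(\Omega)}-(u,\mathfrak{A}^{*}v)_{\mL^{2}(\Omega)}=0$. On the other hand, writing $u$ and $v$ via (\ref{DecSgPart}) as $u=\pi_{+}(u)\sgp+\pi_{-}(u)\sgm+\tilde{u}$, $v=\pi_{+}(v)\sgp+\pi_{-}(v)\sgm+\tilde{v}$ with $\tilde{u},\tilde{v}\in D(\mathfrak{A})$, and expanding $q$ by sesquilinearity, items i), ii), iii) of Proposition~\ref{PropositionPrtiesFormeq} collapse this to
\[
q(u,v)=q(\sgp,\sgp)\big(\pi_{+}(u)\,\overline{\pi_{+}(v)}-\pi_{-}(u)\,\overline{\pi_{-}(v)}\big).
\]
Since $q(\sgp,\sgp)\ne0$, the indefinite Hermitian form $(a,b)\mapsto\vert a\vert^{2}-\vert b\vert^{2}$ of signature $(1,1)$ vanishes identically on $K:=\{(\pi_{+}(u),\pi_{-}(u))\,\vert\,u\in\Ker\,\mathfrak{L}_{+1}\}\subset\Cplx^{2}$; an isotropic subspace for such a form has dimension at most $1$, hence $\dim\big(\Ker\,\mathfrak{L}_{+1}/\Ker\,\mathfrak{L}_{-1}\big)=\dim K\le1$.

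For the lower bound I would argue that $\Ker\,\mathfrak{L}_{-1}$ is strictly contained in $\Ker\,\mathfrak{L}_{+1}$ — this is the result of \cite{BoCC13} recalled above, but it can also be recovered here: the operators $\mathfrak{L}_{\pm1}$ are Fredholm by Proposition~\ref{FredholmnessFarField}, $\mathfrak{L}_{-1}$ is (up to conjugation) the adjoint of $\mathfrak{L}_{+1}$, so $\mrm{Ind}\,\mathfrak{L}_{+1}=-\mrm{Ind}\,\mathfrak{L}_{-1}$ and $\dim\mrm{coker}\,\mathfrak{L}_{+1}=\dim\Ker\,\mathfrak{L}_{-1}$, while Kondratiev's index-jump formula across the simple exponents $\pm i\mu$ gives $\vert\mrm{Ind}\,\mathfrak{L}_{+1}-\mrm{Ind}\,\mathfrak{L}_{-1}\vert=2$; combined with the inclusion $\Ker\,\mathfrak{L}_{-1}\subseteq\Ker\,\mathfrak{L}_{+1}$ this forces $\dim\Ker\,\mathfrak{L}_{+1}=\dim\Ker\,\mathfrak{L}_{-1}+1$. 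Thus $K=\mrm{span}\{(a_{+},a_{-})\}$ with $(a_{+},a_{-})\ne(0,0)$, and evaluating $\vert a\vert^{2}-\vert b\vert^{2}$ on this generator gives $\vert a_{+}\vert=\vert a_{-}\vert$, so in particular $a_{-}\ne0$. Choosing $u_{0}\in\Ker\,\mathfrak{L}_{+1}$ with $(\pi_{+}(u_{0}),\pi_{-}(u_{0}))=(a_{+},a_{-})$ and setting $\zeta:=u_{0}/a_{-}$, one gets $\pi_{-}(\zeta)=1$, $\vert\pi_{+}(\zeta)\vert=\vert a_{+}/a_{-}\vert=1$, and $\zeta\notin\Ker\,\mathfrak{L}_{-1}$ (as $\pi_{-}(\zeta)\ne0$); the splitting $\Ker\,\mathfrak{L}_{+1}=\mrm{span}\{\zeta\}\oplus\Ker\,\mathfrak{L}_{-1}$ and uniqueness modulo $\Ker\,\mathfrak{L}_{-1}$ are immediate from $\dim K=1$, since any competitor $\zeta'$ has $(\pi_{+}(\zeta'-\zeta),\pi_{-}(\zeta'-\zeta))\in K$ with vanishing second coordinate, hence equal to $(0,0)$, hence $\zeta'-\zeta\in\Ker\,\mathfrak{L}_{-1}$.

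The main obstacle is the lower bound $\dim K\ge1$: the upper bound and the relation $\vert\pi_{+}(\zeta)\vert=\vert\pi_{-}(\zeta)\vert$ are soft consequences of the symplectic structure, whereas the genuine presence of one extra singular solution of the homogeneous problem in the wider weighted class is the substantive Kondratiev/elliptic input. A secondary technical point is the careful justification of $\Ker\,\mathfrak{L}_{+1}\subseteq D(\mathfrak{A}^{*})$ with vanishing $\mathfrak{A}^{*}$-image and of $\Ker\,\mathfrak{L}_{-1}=\Ker\,\mathfrak{A}$, both of which rely on controlling the cut-off commutators in $\div(\sigma^{0}\nabla\sgpm)$ and the decay of $\mathring{\mV}^{1}_{-1}(\Omega)$-functions at $O$.
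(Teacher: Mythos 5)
Your proof is correct, and its core is the same argument the paper uses: view elements of $\Ker\,\mathfrak{L}_{+1}$ as elements of $D(\mathfrak{A}^{*})$ annihilated by $\mathfrak{A}^{*}$, note that $q$ vanishes on them, and expand $q$ through the decomposition (\ref{DecSgPart}) and Proposition \ref{PropositionPrtiesFormeq} to get $q(u,v)=q(\sgp,\sgp)\big(\pi_{+}(u)\overline{\pi_{+}(v)}-\pi_{-}(u)\overline{\pi_{-}(v)}\big)$, from which $\vert\pi_{+}(\zeta)\vert=\vert\pi_{-}(\zeta)\vert$, the normalization $\pi_{-}(\zeta)=1$, and the uniqueness modulo $\Ker\,\mathfrak{L}_{-1}$ all follow; your isotropic-subspace packaging of the uniqueness is just a rephrasing of the paper's direct computation $q(\zeta',\zeta)=0$. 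The one genuine difference is the ``substantive input'' $\Ker\,\mathfrak{L}_{-1}\subsetneq\Ker\,\mathfrak{L}_{+1}$: the paper simply cites step 2 of the proof of \cite[Thm.\,4.4]{BoCC13}, whereas you derive it yourself from Fredholmness of $\mathfrak{L}_{\pm1}$ (Proposition \ref{FredholmnessFarField}), the adjointness relation $\mrm{Ind}\,\mathfrak{L}_{+1}=-\mrm{Ind}\,\mathfrak{L}_{-1}$, and the Kondratiev index jump of $2$ across the two simple exponents $\pm i\mu$, yielding $\dim\Ker\,\mathfrak{L}_{+1}=\dim\Ker\,\mathfrak{L}_{-1}+1$. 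This is exactly the device the paper deploys later for the near-field operators in Proposition \ref{PropoFarField} (via \cite[Chap.\,4, Prop.\,3.1]{NaPl94}), so your route is self-contained where the paper's is a citation, at the cost of having to justify the adjoint identification and, as you note, the integration-by-parts/commutator details showing $\Ker\,\mathfrak{L}_{+1}\subset D(\mathfrak{A}^{*})$ with vanishing image and $\Ker\,\mathfrak{L}_{-1}=\{u\in\Ker\,\mathfrak{L}_{+1}\,\vert\,\pi_{\pm}(u)=0\}$ — these are the only points where your sketch stays at the level of a plan, and they are routine.
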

\begin{proof} First of all, by virtue of Proposition \ref{Adjoint} and Proposition \ref{WeightExp}, it is clear that $\Ker\,\mathfrak{L}_{-1}\subset D(\mathfrak{A})$ and $\Ker\,\mathfrak{L}_{+1}\subset D(\mathfrak{A}^{*})$. As shown in step 2 of the proof of \cite[Thm.\,4.4]{BoCC13}, 
necessarily we have $\Ker\,\mathfrak{L}_{+1}\neq \Ker\,\mathfrak{L}_{-1}$.
If $\zeta\in \Ker\,\mathfrak{L}_{+1}\setminus \Ker\,\mathfrak{L}_{-1}$ then $\zeta\in D(\mathfrak{A}^{*})$ 
and, applying the same calculus as in the proof of Proposition \ref{DescribingSAE} above, we have 
$0 = q(\zeta,\zeta) = q(\sgp,\sgp)(\vert\pi_{+}(\zeta)\vert^{2} - \vert\pi_{-}(\zeta)\vert^{2})$.
Since $q(\sgp,\sgp)\neq 0$, this implies $\vert\pi_{+}(\zeta)\vert = \vert\pi_{-}(\zeta)\vert$, and 
$\vert\pi_{-}(\zeta)\vert\neq 0$ since $\zeta\notin \Ker\,\mathfrak{L}_{-1}$.
Hence, dividing by $\pi_{-}(\zeta)$ if necessary, we can assume that $\pi_{-}(\zeta)=1$. In this case, there holds $\vert \pi_{+}(\zeta)\vert = 1$.

\quad\\
Now, take another element $\zeta'\in \Ker\,\mathfrak{L}_{+1}\setminus \Ker\,\mathfrak{L}_{-1}$.
We also have $\vert\pi_{+}(\zeta')\vert = \vert\pi_{-}(\zeta')\vert$. Moreover, there holds
$0 = q(\zeta',\zeta) = q(\sgp,\sgp)\big(\, \pi_{+}(\zeta')\overline{\pi_{+}(\zeta)} -  
\pi_{-}(\zeta')\overline{\pi_{-}(\zeta)}\,\big)$. We deduce 
\[
\begin{array}{l}
\dsp{ \pi_{+}(\zeta') =  \pi_{-}(\zeta')\pi_{+}(\zeta)  }
\quad\Rightarrow\quad 
\dsp{ \zeta' - \pi_{-}(\zeta')\big(\,\pi_{+}(\zeta)\sgp + 
\sgm\,\big)\;\in\;\mathring{\mV}^{1}_{-1}(\Omega) }\\[10pt]

\phantom{\dsp{ \pi_{+}(\zeta') =  \pi_{-}(\zeta')\pi_{+}(\zeta)}}\quad\Rightarrow\quad 
\dsp{ \zeta' - \pi_{-}(\zeta')\zeta\;\in\;\Ker\,\,\mathfrak{L}_{-1}. }
\end{array}
\]
This ends to prove that $\Ker\,\mathfrak{L}_{+1} = 
\mrm{span}\{\zeta\}\oplus\Ker\,\mathfrak{L}_{-1}$.\end{proof}
\subsection{Spectrum of the self-adjoint extensions}
In this section, we study the features of the spectrum $\mathfrak{S}(\mathfrak{A}(\tau))$ of the operator $\mathfrak{A}(\tau)$, $\tau\in\R$, defined in (\ref{definitionOperateurAtau}). First, we prove it admits 
the same qualitative properties as $\mathfrak{S}(\mrm{A}^{\delta})$ for $\delta>0$.

\begin{proposition}
Pick $\tau\in\R$. The operator 
$\mathfrak{A}(\tau)$ is closed, densely defined, self-adjoint and admits 
compact resolvent. Its spectrum consists of two sequences, one nonnegative and one negative, of real eigenvalues of finite multiplicity: 
\[
\dots \eta_{-m}(\tau)\leq \dots\leq 
\eta_{-1}(\tau)< 0 \leq \eta_{0}(\tau)\leq \eta_{1}(\tau)\leq \dots 
\leq \eta_{m}(\tau) \dots\ .
\]
Moreover, there hold $\ \inf \mathfrak{S}(\mathfrak{A}(\tau)) = -\infty\ $ and $\ \sup \mathfrak{S}(\mathfrak{A}(\tau)) = +\infty$.
\end{proposition}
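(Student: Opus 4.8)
The plan is to show that $\mathfrak{A}(\tau)$ shares all the structural features of $\mathrm{A}^{\delta}$, relying on the decomposition of its domain in \eqref{definitionOperateurAtau} and on Proposition \ref{propoDcpDA}. First I would prove that $\mathfrak{A}(\tau)$ is closed: writing $D(\mathfrak{A}(\tau)) = \mathrm{span}\{\sgp+e^{i\tau}\sgm\}\oplus D(\mathfrak{A})$ and noting that $\sgp+e^{i\tau}\sgm \in \mathring{\mV}^{1}_{1}(\Omega)$ with $\mathfrak{A}^{*}(\sgp+e^{i\tau}\sgm)\in\mL^{2}(\Omega)$ (a direct computation, since $r^{\pm i\mu}\phi(\theta)$ is a homogeneous solution so $\mathrm{div}(\sigma^{0}\nabla\sgpm)$ is supported away from $O$ by the cut-off $\psi$), the graph norm on $D(\mathfrak{A}(\tau))$ controls the $\mathbb{C}$-coefficient $c$ of the singular part (via the functional $\pi_{-}$, say, which is continuous for the graph norm of $\mathfrak{A}^{*}$) and hence the $D(\mathfrak{A})$-part; closedness of $\mathfrak{A}$ (Proposition \ref{propoDcpDA} gives the estimate \eqref{estimAPdomainReg}, so $D(\mathfrak{A})$ is complete for the graph norm) then transfers to the finite-rank extension. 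Density is immediate since $D(\mathfrak{A})\supset\mathscr{C}^{\infty}_{0}(\Omega)$ is already dense in $\mL^{2}(\Omega)$. Self-adjointness is exactly Proposition \ref{DescribingSAE} (with $\alpha_{+}=1$, $\alpha_{-}=e^{i\tau}$).

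Next I would establish the compact resolvent property. Since $\mathfrak{A}(\tau)$ is self-adjoint, it suffices to check that for one (hence every) point $\eta_{0}$ in the resolvent set the operator $(\mathfrak{A}(\tau)-\eta_{0})^{-1}:\mL^{2}(\Omega)\to\mL^{2}(\Omega)$ is compact; equivalently, that the embedding $D(\mathfrak{A}(\tau))\hookrightarrow\mL^{2}(\Omega)$ is compact when $D(\mathfrak{A}(\tau))$ carries its graph norm. This follows from \eqref{estimAPdomainReg}: every $v\in D(\mathfrak{A}(\tau))$ splits as $v = c\,(\sgp+e^{i\tau}\sgm)+v_{0}$ with $v_{0}\in D(\mathfrak{A})\subset\mathring{\mV}^{1}_{-1}(\Omega)$, the coefficient $c$ is bounded by the graph norm, $\sgp+e^{i\tau}\sgm$ lies in a fixed finite-dimensional space, and $\mathring{\mV}^{1}_{-1}(\Omega)$ embeds compactly in $\mL^{2}(\Omega)$ (the weight $r^{-1}$ at $O$ gives extra decay, and away from $O$ this is the usual $\mH^{1}\hookrightarrow\mL^{2}$ compact embedding on a bounded Lipschitz domain). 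Hence bounded sequences in the graph norm have $\mL^{2}$-convergent subsequences, giving compactness of the resolvent. By \cite[Chap.\,III, Thm.\,6.29]{Kato95}, $\mathfrak{S}(\mathfrak{A}(\tau))\subset\R$ consists of isolated eigenvalues of finite multiplicity.

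It remains to prove that the spectrum is unbounded both from below and from above, yielding the two monotone sequences as in Proposition \ref{propoSptDesc}. I would repeat verbatim the Weyl-sequence argument from the proof of Proposition \ref{propoSptDesc}: pick $\bfx_{0}\in\Omega_{-}^{0}$, rescale a fixed bump $\xi\in\mathscr{C}^{\infty}_{0}(\R^{2})$ to $\xi_{m}(\bfx)=m\,\xi(m(\bfx-\bfx_{0}))$; for $m$ large these are supported in $\Omega_{-}^{0}$ away from $O$, hence lie in $\mathscr{C}^{\infty}_{0}(\Omega_{-}^{0})\subset D(\mathfrak{A})\subset D(\mathfrak{A}(\tau))$, satisfy $\|\xi_{m}\|_{\mL^{2}(\Omega)}=1$, and $(\mathfrak{A}(\tau)\xi_{m},\xi_{m})_{\mL^{2}(\Omega)}=\int_{\Omega}\sigma^{0}|\nabla\xi_{m}|^{2}=-m^{2}|\sigma_{-}|\,\|\nabla\xi\|^{2}_{\mL^{2}(\R^2)}\to-\infty$, so by \cite[Cor.\,4.1.5]{BiSo87} $\inf\mathfrak{S}(\mathfrak{A}(\tau))=-\infty$; choosing $\bfx_{0}\in\Omega_{+}^{0}$ gives $\sup\mathfrak{S}(\mathfrak{A}(\tau))=+\infty$. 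Finally, reindexing the eigenvalues in increasing order with $0$ as the pivot between the nonnegative and negative ones produces the stated enumeration. The only genuinely delicate point is the closedness/compact-embedding step, where one must be careful that the splitting $v\mapsto(c,v_{0})$ is continuous for the graph norm — but this is precisely what the continuity of $\pi_{-}$ on $D(\mathfrak{A}^{*})$ together with \eqref{estimAPdomainReg} provides; everything else is a direct transcription of arguments already used for $\mathrm{A}^{\delta}$.
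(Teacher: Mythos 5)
Your proposal is correct and follows essentially the same route as the paper: self-adjointness is taken from Proposition \ref{DescribingSAE}, compact resolvent is obtained from the splitting of $D(\mathfrak{A}(\tau))$ into the finite-dimensional singular part plus a $\mathring{\mV}^{1}_{-1}(\Omega)$-part together with the compact embedding $\mathring{\mV}^{1}_{-1}(\Omega)\hookrightarrow\mL^{2}(\Omega)$, and unboundedness of the spectrum is the same Weyl-sequence argument as in Proposition \ref{propoSptDesc}. The only (harmless) difference is that you assemble the graph-norm control from (\ref{estimAPdomainReg}) and the continuity of $\pi_{\pm}$, whereas the paper invokes the resolvent estimate (\ref{estimationResolv}) obtained from \cite[Thm.\,4.4]{BoCC13}; also note your separate closedness step is redundant once self-adjointness is granted.
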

\begin{proof} From Proposition \ref{DescribingSAE}, we know that for all $\tau\in\R$, $\mathfrak{A}(\tau)$ is a self-adjoint operator. In addition, for $z\in\Cplx\setminus\R$, using \cite[Thm.\,4.4]{BoCC13}, we can prove the estimate
\begin{equation}\label{estimationResolv}
|\pi_+(v)|+ \Vert v - \pi_{+}(v)\,(\sgp + e^{i\tau}\sgm)\Vert_{\mV_{-1}^{1}(\Omega)}\leq C\,\Vert (\mathfrak{A}(\tau)-z\mrm{Id}) v\Vert_{\mL^{2}(\Omega)},\qquad\forall v\in\mL^{2}(\Omega).
\end{equation}
Since the embedding $\mathring{\mV}_{-1}^{1}(\Omega)\subset\mV_{0}^{0}(\Omega)= \mL^{2}(\Omega)$ is compact (see \cite[Lem.\,6.2.1]{KoMR97}), (\ref{estimationResolv}) allows to prove that $\mathfrak{A}(\tau)$ has compact resolvent. The second part of the statement can be obtained working like in the proof  of Proposition \ref{propoSptDesc}.
\end{proof}

\noindent Note that, because we define $(\eta_{j}(\tau))_{j\ge0}$ and $(\eta_{j}(\tau))_{j<0}$ as ordered series, and because we impose that $\eta_{0}(\tau)$ is the smallest non negative eigenvalue, we cannot hope that the function $\tau\mapsto \eta_{j}(\tau)$ be continuous.

\quad\\
Yet there is a possibility to state useful results about the dependency of 
$\mathfrak{S}(\mathfrak{A}(\tau))$ with respect to $\tau$. Indeed there 
is continuous dependency of $\mathfrak{A}(\tau)$ with respect to $\tau$ 
in the sense of ``generalized convergence''. The later is a notion of convergence defined on the set of closed possibly unbounded operators and is considered 
in \cite[Chap.\,IV, \S 2]{Kato95}. Let us briefly recall what it consists in.
Define the gap functional 
\[
\mathfrak{d}(T,S)\:=\;\sup_{u\in D(T)\setminus\{0\}}\;
\mathop{\inf\phantom{p}}_{v\in D(S)}\frac{\Vert u-v\Vert_{\mL^{2}(\Omega)} +
\Vert Tu-Sv\Vert_{\mL^{2}(\Omega)}  }{ 
\Vert u\Vert_{\mL^{2}(\Omega)} +\Vert Tu\Vert_{\mL^{2}(\Omega)}}.
\]
A sequence of closed operators $T_{n}:D(T_{n})\to \mL^{2}(\Omega)$ is said 
to converge to a closed operator $T:D(T)\to \mL^{2}(\Omega)$ if and only if
$\lim_{n\to+\infty}\mrm{max}(\mathfrak{d}(T,T_{n}), \mathfrak{d}(T_{n},T)) = 0$.
A result of continuous dependency in this sense actually holds.

\begin{proposition}\label{propositionGeneralContinuity}
There is a constant $C>0$ such that 
$\mathfrak{d}(\mathfrak{A}(\tau_{1}),\mathfrak{A}(\tau_{2}))\leq C
\vert e^{i\tau_{1}} - e^{i\tau_{2}}\vert,$ $\forall \tau_{1},\tau_{2}\in\R$.
\end{proposition}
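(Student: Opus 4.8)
The plan is to estimate the gap functional $\mathfrak{d}(\mathfrak{A}(\tau_1),\mathfrak{A}(\tau_2))$ directly from its definition by constructing, for every $u\in D(\mathfrak{A}(\tau_1))$, an explicit competitor $v\in D(\mathfrak{A}(\tau_2))$ and bounding the numerator. First I would write $u = a\,(\sgp + e^{i\tau_1}\sgm) + \tilde u$ with $a\in\Cplx$ and $\tilde u\in D(\mathfrak{A})$, as permitted by \eqref{definitionOperateurAtau}, and take the natural guess $v := a\,(\sgp + e^{i\tau_2}\sgm) + \tilde u$, which lies in $D(\mathfrak{A}(\tau_2))$. Then $u - v = a\,(e^{i\tau_1} - e^{i\tau_2})\,\sgm$ and, since both operators act as $-\div(\sigma^0\nabla\,\cdot\,)$ and $\sgm\in\Ker\,\mathfrak{L}_{+1}$ near $O$ modulo a smooth remainder (more precisely $-\div(\sigma^0\nabla\sgm)\in\mL^2(\Omega)$, because $\div(\sigma^0\nabla(r^{i\mu}\phi))=0$ and the cut-off $\psi$ is constant near $O$), we also get $\mathfrak{A}(\tau_1)u - \mathfrak{A}(\tau_2)v = a\,(e^{i\tau_1}-e^{i\tau_2})\,(-\div(\sigma^0\nabla\sgm))$. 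Hence the numerator is bounded by $|a|\,|e^{i\tau_1}-e^{i\tau_2}|\,\big(\|\sgm\|_{\mL^2(\Omega)} + \|\div(\sigma^0\nabla\sgm)\|_{\mL^2(\Omega)}\big)$.

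The remaining point is to bound $|a|$ by the denominator $\|u\|_{\mL^2(\Omega)} + \|\mathfrak{A}(\tau_1)u\|_{\mL^2(\Omega)}$, uniformly in $\tau_1$. This is exactly where the continuity of the coefficient functional $\pi_+$ on the graph norm of $\mathfrak{A}^*$ comes in: from \eqref{DecSgPart} and the structure of $D(\mathfrak{A}(\tau_1))$ one has $a = \pi_+(u)$, and the estimate recalled right after \eqref{defintionPipm}, $|\pi_+(u)|\le C(\|u\|_{\mL^2(\Omega)} + \|\mathfrak{A}^*u\|_{\mL^2(\Omega)})$, together with $\mathfrak{A}^*u = \mathfrak{A}(\tau_1)u$ for $u\in D(\mathfrak{A}(\tau_1))\subset D(\mathfrak{A}^*)$, gives $|a|\le C(\|u\|_{\mL^2(\Omega)} + \|\mathfrak{A}(\tau_1)u\|_{\mL^2(\Omega)})$ with $C$ independent of $\tau_1$. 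Combining the two bounds yields
\[
\mathfrak{d}(\mathfrak{A}(\tau_1),\mathfrak{A}(\tau_2)) \le C\,|e^{i\tau_1} - e^{i\tau_2}|
\]
after taking the supremum over $u$, with a constant depending only on $\|\sgm\|$, $\|\div(\sigma^0\nabla\sgm)\|$ and the constant in the $\pi_+$ bound. Since the roles of $\tau_1$ and $\tau_2$ are symmetric (swap the two apertures, or simply repeat the argument), the same bound holds for $\mathfrak{d}(\mathfrak{A}(\tau_2),\mathfrak{A}(\tau_1))$, which is what the statement asserts.

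I expect the only genuinely delicate point to be the verification that $\sgm\in D(\mathfrak{A}^*)$ with $\mathfrak{A}^*\sgm\in\mL^2(\Omega)$, i.e. that applying $-\div(\sigma^0\nabla\,\cdot\,)$ to $\sgpm$ produces an $\mL^2$ function — this is where the precise choice \eqref{DefSing} of $\sgpm$ (so that the singular part $r^{\pm i\mu}\phi(\theta)$ is annihilated by the operator and only terms involving derivatives of $\psi$, supported away from $O$, survive) is used; it is essentially contained in Proposition \ref{Adjoint}. Everything else is bookkeeping: identifying $a$ with $\pi_+(u)$, invoking the already-established continuity of $\pi_+$, and noting that the two operators in question differ only through the coefficient $e^{i\tau}$ multiplying $\sgm$ in the domain, so the difference $\mathfrak{A}(\tau_1)u-\mathfrak{A}(\tau_2)v$ is a scalar multiple of the fixed function $\mathfrak{A}^*\sgm$.
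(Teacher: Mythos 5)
Your proposal is correct and follows essentially the same route as the paper: the same competitor $v=\pi_{+}(u)\,(\sgp+e^{i\tau_{2}}\sgm)+\tilde{u}$, the same computation of $u-v$ and $\mathfrak{A}(\tau_{1})u-\mathfrak{A}(\tau_{2})v$ as multiples of $\sgm$ and $\mathfrak{A}^{*}\sgm$, and the same use of the graph-norm continuity of $\pi_{+}$ together with $\mathfrak{A}^{*}u=\mathfrak{A}(\tau_{1})u$ to bound the coefficient; the fact that $\mathfrak{A}^{*}\sgm\in\mL^{2}(\Omega)$ is indeed guaranteed by Proposition \ref{Adjoint}, as you note.
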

\begin{proof} Take two $\tau_{1},\tau_{2}\in\R$. Consider an arbitrary $u\in 
D(\mathfrak{A}(\tau_{1}))\setminus\{0\}$ such that $u = \pi_{+}(u) \,(\sgp+e^{i\tau_{1}}\sgm) + \tilde{u}$, with $\tilde{u}\in D(\mathfrak{A})$. Let us define 
$v := \pi_{+}(u) \,(\sgp+e^{i\tau_{2}}\sgm) + \tilde{u}\in D(\mathfrak{A}(\tau_{2}))$.
Straightforward calculus yields
\[
\Vert u-v\Vert_{\mL^{2}(\Omega)} + \Vert \mathfrak{A}(\tau_{1})u-\mathfrak{A}(\tau_{2})v\Vert_{\mL^{2}(\Omega)}\;=\;
\vert \pi_{+}(u) \vert\;\vert e^{i\tau_{1}} - e^{i\tau_{2}}\vert\;(\Vert \sgm\Vert_{\mL^{2}(\Omega)} +
\Vert \mathfrak{A}^{*}\sgm\Vert_{\mL^{2}(\Omega)}).
\]
According to the continuity of $\pi_{+}$, there exists $C>0$ such that 
$\vert \pi_{+}(u) \vert\leq C\,(\Vert u\Vert_{\mL^{2}(\Omega)}+ \Vert \mathfrak{A}^{*}u\Vert_{\mL^{2}(\Omega)})$. 
Since $\mathfrak{A}^{*}u = \mathfrak{A}(\tau_{1})u$ and $\Vert \sgm\Vert_{\mL^{2}(\Omega)}\neq 0$, this allows to obtain the result of Proposition \ref{propositionGeneralContinuity}.
\end{proof}
\begin{proposition}\label{BoundedEigenvalues}
For any fixed $j\in\Z$, the function $\tau\mapsto\eta_{j}(\tau)$ is bounded over $\R$.
\end{proposition}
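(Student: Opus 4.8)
The plan is to exploit the continuity of $\tau\mapsto\mathfrak{A}(\tau)$ in the sense of generalized convergence (Proposition \ref{propositionGeneralContinuity}) together with the periodicity $\mathfrak{A}(\tau)=\mathfrak{A}(\tau+2\pi)$, which follows immediately from the definition (\ref{definitionOperateurAtau}) since $e^{i\tau}=e^{i(\tau+2\pi)}$. The combination of these two facts means that, roughly speaking, the family of self-adjoint operators $\{\mathfrak{A}(\tau)\}_{\tau\in\R}$ is parametrized by the compact set $\R/2\pi\Z\cong S^1$, and moving continuously around this circle cannot send any fixed-index eigenvalue off to infinity. The subtlety, already flagged in the remark preceding the statement, is that $\tau\mapsto\eta_j(\tau)$ is \emph{not} continuous because of the indexing convention (we force $\eta_0(\tau)$ to be the smallest nonnegative eigenvalue, so eigenvalues can jump labels when one of them crosses $0$). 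So one cannot simply invoke continuity of eigenvalues on a compact parameter set; a little more care is needed.

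The key tool will be the stability of the spectrum under generalized convergence: by \cite[Chap.\,IV, Thm.\,2.23 and Thm.\,3.16]{Kato95} (or the analogous statements for self-adjoint operators with compact resolvent), if $\mathfrak{d}(\mathfrak{A}(\tau_n),\mathfrak{A}(\tau))\to 0$ and $\mathfrak{d}(\mathfrak{A}(\tau),\mathfrak{A}(\tau_n))\to 0$, then for any compact interval $[a,b]\subset\R$ with $a,b\notin\mathfrak{S}(\mathfrak{A}(\tau))$, the total multiplicity of $\mathfrak{S}(\mathfrak{A}(\tau_n))\cap[a,b]$ equals that of $\mathfrak{S}(\mathfrak{A}(\tau))\cap[a,b]$ for $n$ large, and the eigenvalues in $[a,b]$ converge. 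First I would fix $j\in\Z$; say $j\ge0$ (the case $j<0$ is symmetric). For each $\tau$, choose $R(\tau)>0$ large enough that the interval $[-R(\tau),R(\tau)]$ contains strictly more than $j+1$ nonnegative eigenvalues of $\mathfrak{A}(\tau)$ (counted with multiplicity) in its nonnegative part, equivalently $\eta_{j}(\tau)<R(\tau)$, and with $\pm R(\tau)\notin\mathfrak{S}(\mathfrak{A}(\tau))$. By the spectral stability above, there is a neighbourhood $U_\tau$ of $\tau$ in $\R$ on which $\pm R(\tau)$ stay in the resolvent set and the number of eigenvalues in $[0,R(\tau)]$ is constant; since that number exceeds $j$, we get $0\le\eta_j(\tau')<R(\tau)$ for all $\tau'\in U_\tau$. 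I must also bound $\eta_j$ from below: since $\eta_j(\tau')\ge\eta_0(\tau')\ge0$ for $j\ge0$, the lower bound is automatic; for $j<0$ the same argument applied to the negative part of the spectrum gives $-R(\tau)<\eta_j(\tau')\le0$.

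Finally, cover the circle $\R/2\pi\Z$, which is compact, by finitely many such neighbourhoods $U_{\tau_1},\dots,U_{\tau_N}$ (pushed down from $\R$ by periodicity), and set $R:=\max_k R(\tau_k)$. Then $|\eta_j(\tau)|\le R$ for every $\tau$ in one of these neighbourhoods, hence for every $\tau\in\R$ by periodicity, which is exactly the claim. The main obstacle, as anticipated, is not a deep one but a bookkeeping one: one has to phrase everything in terms of the number of eigenvalues in a \emph{fixed} interval rather than in terms of continuity of the individually-labelled $\eta_j$, precisely because the labelling jumps; once that reformulation is in place, Kato's spectral-stability theorem under generalized convergence and a compactness argument on $S^1$ do the rest. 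One should also double-check that the generalized convergence $\mathfrak{A}(\tau_n)\to\mathfrak{A}(\tau)$ holds in \emph{both} directions of the gap, but this is guaranteed by Proposition \ref{propositionGeneralContinuity}, whose bound is symmetric up to the constant after swapping $\tau_1,\tau_2$.
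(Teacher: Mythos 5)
Your proposal is essentially the paper's own proof: reduce to a compact parameter set by $2\pi$-periodicity, use the generalized-convergence continuity of Proposition \ref{propositionGeneralContinuity} together with \cite[Chap.\,IV, Thm.\,3.16]{Kato95} to freeze the eigenvalue count in a fixed bounded region near each $\tau_0$, and conclude by a finite covering argument. The only place where the paper is more careful is the handling of the point $0$: you apply the stability result to the interval $\lbr 0;R(\tau)\rbr$, but Kato's theorem requires the boundary of the region to lie in the resolvent set, and $0$ \emph{can} be an eigenvalue of $\mathfrak{A}(\tau)$ for certain $\tau$ (this is exactly the non-injectivity phenomenon exploited later in the paper), while your buffer of one extra nonnegative eigenvalue does not cover a higher-multiplicity eigenvalue at $0$ drifting to the negative side; the paper avoids this by enclosing $j+1$ \emph{strictly positive} eigenvalues in a region $\mathcal{O}_{\tau_0}$ contained in the open right half-plane, so that all $j+1$ perturbed eigenvalues remain positive and bound $\eta_j$ from above --- an easy patch of your argument (take $\lbr a;R\rbr$ with $0<a$ and $a,R$ in the resolvent set), after which the two proofs coincide.
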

\begin{proof} Let us consider a fixed $j\geq 0$. The proof below  can be straightforwardly adapted 
to the case $j<0$. Since the operator valued function $\tau\mapsto \mathfrak{A}(\tau)$ defined in (\ref{definitionOperateurAtau}) is $2\pi$-periodic, the map $\tau\mapsto \eta_{j}(\tau)$ is $2\pi$-periodic.
Therefore, it suffices to show that $\tau\mapsto \eta_{j}(\tau)$ is bounded over $\lbr 0;2\pi\rbr$.

Pick an arbitrary $\tau_{0}\in \lbr 0;2\pi\rbr$. Take, at least, $j+1$ eigenvalues such that $0<\eta_{n_{0}}(\tau_{0})< \dots< \eta_{n_{j}}(\tau_{0})$. 
Consider a closed smooth curve $\Upsilon_{\tau_{0}}\subset \Cplx\setminus \mathfrak{S}(\mathfrak{A}(\tau_{0}))$ dividing the complex plane in two 
connected components: a bounded region $\mathcal{O}_{\tau_{0}}$ and $\Cplx\setminus\overline{\mathcal{O}_{\tau_{0}}}$. We choose $\Upsilon_{\tau_{0}}$ so that $\mathcal{O}_{\tau_{0}}$ contains the eigenvalues $\{\eta_{n_{k}}(\tau_{0})\}_{k=0}^{j}$ and so that there holds $\mathcal{O}_{\tau_{0}}\subset\{\lambda\in\Cplx\,|\,\Re e\,\lambda>0\}$. Since $\tau\mapsto \mathfrak{A}(\tau)$ 
is continuous in the sense of generalized convergence, we can apply \cite[Chap.\,IV, Thm.\,3.16]{Kato95}. This theorem ensures that there exists $\eps_{\tau_{0}}>0$ such that for $\vert \tau -\tau_{0}
\vert< \eps_{\tau_{0}}$, the domain $\mathcal{O}_{\tau_{0}}$ contains $j+1$ strictly positive eigenvalues 
of $\mathfrak{A}(\tau)$ (counted with their multiplicities). Thus, according to the definition of $\eta_{j}(\tau)$,
we have 
\[
\forall \tau\in ( \tau_{0} - \eps_{\tau_{0}}; \tau_{0} + \eps_{\tau_{0}}),\qquad 0\leq \eta_{j}(\tau)\leq m_{\tau_{0}} = 
\max\{\;\Re e\,\lambda\;\vert\;\lambda\in\mathfrak{S}(\mathfrak{A}(\tau_{0}))\cap \mathcal{O}_{\tau_{0}} \;\} <+\infty.
\]
The set $\lbr0;2\pi\rbr$ being compact, it can be covered by a finite number of intervals of this type: 
$\lbr0;2\pi\rbr\subset \cup_{k=1}^{p}( \tau_{k} - \eps_{\tau_{k}}; \tau_{k} + \eps_{\tau_{k}})$.
In conclusion, we have $\eta_{j}(\tau)\leq \mrm{max}_{k=1\dots p} m_{\tau_{k}} <+\infty$ for all $\tau\in 
\lbr 0;2\pi\rbr$. This ends to prove that $\tau\mapsto\eta_{j}(\tau)$ is bounded over $\R$.
\end{proof}

\section{Formal asymptotics of the eigenvectors}\label{FormalAsymp}
We dedicate this section to the description of the asymptotics of the eigenvalues of the operator $\mA^{\delta}$ defined in (\ref{ExOp}) as $\delta\to 0$. In the present problem, there appears a boundary layer in the neighbourhood of the rounded corner. Therefore, we propose a far field expansion and a near field expansion that we will match. This kind of procedure has been thoroughly described in \cite{MR0416240,MR1182791} and \cite[Chap.\,2]{MaNP00} and we refer the reader to these reference books for more details. The ansatz we define here will be validated by error estimates derived in the next sections. Let us consider a fixed $m\in\Z$ and an eigenpair $(\lambda_m^{\delta},u^{\delta})\in\Cplx\times 
\mH^{1}_{0}(\Omega)\setminus\{0\}$ such that 
\begin{equation}\label{EqnsPbVp}
\mA^{\delta}u^{\delta}=-\div(\sigma^{\delta}\nabla u^{\delta}) = \lambda_m^{\delta} u^{\delta}\quad\mbox{ in }\Omega\qquad\mbox{ and }\qquad\Vert u^{\delta}\Vert_{\mL^{2}(\Omega)} = 1,\quad\forall \delta\in(0;1].
\end{equation}
We will also assume that
\[
\limsup_{\delta\to 0}\vert\lambda^{\delta}_m\vert <+\infty.
\]
Admittedly, it is not obvious that such an assumption holds true. However, it will be justified \textit{a posteriori} (see (\ref{MainEstimate})). As previously announced and as it is common in asymptotic analysis, we distinguish expansions 
far from the rounded corner, and close to the rounded corner.

\subsection{Far field expansion.} In the far field region, \textit{i.e.} far from $O$, as $\delta\to0$, 
we look for an expansion of $(\lambda^{\delta}_m,u^{\delta})$ of the form 
\begin{equation}\label{ansatzFF}
\lambda^{\delta}_m = \eta_m^{\delta}+\cdots,\qquad\qquad u^{\delta}=\breve{u}^{\delta}+\cdots,
\end{equation}
for some pair $(\eta_m^{\delta},\breve{u}^{\delta})$ which has to be determined. We should make a few comments concerning this ansatz. Observe that, in this notation, it seems that we allow some $\delta$-dependence for the first term in the asymptotic expansion of $(\lambda^{\delta}_m,u^{\delta})$. The reason comes from the results we obtained in \cite{ChCNSu}. In this paper, where we studied the source term problem associated with (\ref{ExPb}), we proved that the solution, when it is well-defined, is not stable with respect to $\delta$, and depends on the small rounding parameter even at the first order. Nevertheless, $(\eta_m^{\delta},\breve{u}^{\delta})$ will be an asymptotic expansion of $(\lambda^{\delta}_m,u^{\delta})$ in the sense that it will be the solution of some problem defined in the limit geometry (\ref{GeometryDeltaZero}) obtained taking $\delta=0$. Plugging (\ref{ansatzFF}) into (\ref{EqnsPbVp}), we see that the following equations have to hold
\begin{equation}\label{LimitPb}
-\div(\sigma^{0}\nabla \breve{u}^{\delta}) = \eta_m^{\delta}\breve{u}^{\delta}\ \mbox{ in }\Omega\qquad\mbox{ and }\qquad\breve{u}^{\delta}=0\ \mbox{ on }\partial\Omega\setminus\{O\}.
\end{equation}
Let us choose to impose that $\breve{u}^{\delta}$ be an element of $\mrm{L}^2(\Om)$. In this case, (\ref{LimitPb}) leads us to look for a $\breve{u}^{\delta}$ which belongs to $D(\mathfrak{A}^{*})$ and which satisfies the equation $\mathfrak{A}^{*}\breve{u}^{\delta} = \eta_m^{\delta}\,\breve{u}^{\delta}$. Let us recall that according to Proposition \ref{WeightExp}, we know there exist constants 
$c^{\delta}_{\pm}\in\Cplx$  such that 
\begin{equation}\label{NearFieldApprox}
\breve{u}^{\delta} -( c^{\delta}_{+}\,\sgp +c^{\delta}_{-}\,\sgm )\in D(\mathfrak{A})\subset \mathring{\mV}^{1}_{-1}(\Omega).
\end{equation}
Let us emphasize that at this point, $\eta_m^{\delta}$ and $\breve{u}^{\delta}$ are not yet completely determined (in particular the constants $c^{\delta}_{\pm}$ are not known). However, from a formal point of view, up to some remainder with respect to $\delta$, we have the following asymptotic behaviour as $r\to0$:
\begin{equation}\label{dvptFFE}
\begin{array}{lcl}
\breve{u}^{\delta}(r,\theta) & = &  c^{\delta}_{+}\,\sgp(r,\theta) +c^{\delta}_{-}\,\sgm(r,\theta) +\cdots\\[4pt]
  & = &  c^{\delta}_{+}\,r^{+i\mu}\phi(\theta) +c^{\delta}_{-}\,r^{-i\mu}\phi(\theta) +\cdots \ .
\end{array}
\end{equation}

\subsection{Near field expansion.} In the near field region, \textit{i.e.} close to $O$, we consider the change of coordinates $\bfxi=\bfx/\delta$ ($\bfxi$ 
is the fast variable), we set $U^{\delta}(\bfxi)=u^{\delta}(\delta\bfxi)$, and we look for an expansion of $U^{\delta}$ of the form 
\[
U^{\delta} = \breve{U}^{\delta}+\cdots,
\]
where the function $\breve{U}^{\delta}$ has to be determined. Again, in accordance with \cite{ChCNSu}, we allow some $\delta$-dependence for the first term in the asymptotic expansion of $U^{\delta}$. However, $\breve{U}^{\delta}$ will be defined as the solution of some problem set in a geometry whose features do not depend on $\delta$. With the above change of variables, letting $\delta\to 0$, formally we obtain $0 = \mrm{div}_{\bfx}(\sigma^{\delta}\nabla_{\bfx} u^{\delta}) + 
\lambda^{\delta}_m u^{\delta} = \delta^{-2}( \mrm{div}_{\bfxi}(\sigma^{\infty}\nabla_{\bfxi} U^{\delta})+ 
\delta^{2}\lambda^{\delta}_m U^{\delta} )$, where $\sigma^{\infty}$ is the function such that $\sigma^{\infty}(\xi)=\sigma_{\pm}$ in $\Xi_{\pm}$ (see the definition of $\Xi_{\pm}$ on Figure \ref{Frozen geometry}). Since we assumed that $(\lambda^{\delta}_m)_{\delta}$ stays bounded as $\delta$ tends to zero, this leads us to impose the equations 
\begin{equation}\label{NearFieldEq}
-\mrm{div}(\sigma^{\infty}\nabla \breve{U}^{\delta}) = 0\ \mbox{ in } \Xi\qquad\mbox{ and }\qquad \breve{U}^{\delta}=0\ \mbox{ on } \partial\Xi\setminus\{O\}.
\end{equation}
Now, we set a functional framework for $\breve{U}^{\delta}$. Let us denote $(\rho,\theta)$ the polar coordinates in the geometry $\Xi$. For $\beta\in\R$, $k\geq 0$, 
we introduce the space $\mVc^{k}_{\beta}( \Xi)$ defined as the completion 
of the set $\{v|_{\Xi},\,v\in\mathscr{C}^{\infty}_0(\R^2)\}$ for the norm
\begin{equation}\label{WeightedNorms}
\Vert v\Vert_{\mVc^{k}_{\beta}( \Xi)} := \Big(\sum_{\vert\alpha\vert\le k}
\int_{ \Xi}(1+\rho^2)^{\beta+\vert\alpha\vert-k}\vert\partial^{\alpha}_{\bfxi} v
\vert^{2}\;d\bfxi\;\Big)^{1/2}.
\end{equation}
With these spaces, our goal is to discriminate behaviours of functions at infinity only and not at $O$. In the sequel, we will impose the Dirichlet boundary condition on $\partial\Xi$ working with functions belonging to 
\[
\mathring{\mVc}^{1}_{\beta}(\Xi):=\left\{ v\in 
\mVc^{1}_{\beta}(\Xi)\;\vert\;v = 0\mbox{ on }\partial\Xi\setminus\{O\}\right\}.
\]
For all $\beta\in\R$, this space coincides with the completion of $\mathscr{C}^{\infty}_0(\Xi)$ for the norm $\Vert\cdot\Vert_{\mVc^{1}_{\beta}( \Xi)}$. \\
\newline
Let us describe the structure of the 
solutions to problems of the form (\ref{NearFieldEq}) in the 
weighted Sobolev setting corresponding to norms (\ref{WeightedNorms}). We denote $\mathring{\mVc}_{\beta}^{1}(\Xi)^{*}$ the dual space to $\mathring{\mVc}_{\beta}^{1}(\Xi)$ 
equipped with the canonical norm defined similarly to (\ref{DualNorm}), and we let 
$\langle\cdot,\cdot\rangle_{\Xi}$ refer to the duality pairing between these two spaces.
First of all, we have a Fredholmness result.

\begin{proposition}\label{FredholmnessNearField}
Assume that $\kappa_{\sigma} = \sigma_{-}/\sigma_{+}\neq -1$. For $\beta\in\R$, introduce the bounded linear operator $\mathfrak{B}_{\beta}: \mathring{\mVc}_{\beta}^{1}(\Xi)\to \mathring{\mVc}_{-\beta}^{1}(\Xi)^{*}$ such that $\langle\mathfrak{B}_{\beta} u,v\rangle_{\Xi} = (\sigma^{\infty}\nabla u,\nabla v)_{\mrm{L}^2(\Xi)}$ for $u\in \mathring{\mVc}_{\beta}^{1}(\Xi)$, $v\in \mathring{\mVc}_{-\beta}^{1}(\Xi)$. Then,  $\mathfrak{B}_{\beta}$ is of Fredholm type if and only if no element $\lambda\in\Lambda$ satisfies $\Re e\,\lambda = \beta$ (for the definition of $\Lambda$, see (\ref{SingExp})).
\end{proposition}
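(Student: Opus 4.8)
The statement is the counterpart, for the model geometry $\Xi$, of Proposition~\ref{FredholmnessFarField}: the domain $\Xi$ has no conical point in its bounded part but it possesses a single conical outlet to infinity, and the plan is to deduce the result from the classical Kondratiev--Mellin theory for elliptic boundary value problems in domains with conical exits (see \cite{Kond67,NaPl94,KoMR97} and \cite[Chap.\,2,\,3]{MaNP00}). I would first observe that the bounded part of $\Xi$ is harmless. The interface $\Gamma$ is smooth, lies in the interior of $\Xi$ away from $O$, and meets $\partial\Xi$ orthogonally at $O$ since $\partial_t\varphi_\Gamma(0)\perp\{\theta=0\}$; moreover the weight $(1+\rho^2)^{\beta+\vert\alpha\vert-k}$ appearing in (\ref{WeightedNorms}) is bounded from above and below on bounded subsets of $\overline{\Xi}$, so that $\mathring{\mVc}^1_\beta(\Xi)$ coincides locally near $O$ with $\mH^1$. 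Hence, exactly as for $\mA^\delta$ near a point where $\Sigma^\delta$ meets $\partial\Om$ with a right angle (cf.\ \cite[Thm.\,1]{BoDR99} and \cite{BoCC12}), for every $\kappa_\sigma\ne-1$ the problem satisfies, on the half-disk $\mrm{D}(O,2)\cap\Xi$, a local a priori estimate with a compact ($\mL^2$) remainder, \emph{without} weight and \emph{without} exceptional value of $\beta$.

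The outlet to infinity is then what governs the Fredholm property. For $\rho\ge2$ the pair $(\Xi,\Xi_\pm)$, together with $\partial\Xi$, coincides with the model cone $K=\{(\rho\cos\theta,\rho\sin\theta)\,\vert\,\rho>0,\ 0<\theta<\pi\}$ split by the ray $\{\theta=\pi/4\}$, and $\sigma^\infty$ equals there the function $\sigma^0$ of \S\ref{StudySing}. By \cite[\S\,4.1]{BoCC13} (see (\ref{SingExp})--(\ref{SingularExponents2})), the operator pencil attached to $-\div(\sigma^0\nabla(\rho^\lambda\,\cdot\,))$ on $K$, with Dirichlet data on $\{\theta=0\}\cup\{\theta=\pi\}$ and transmission conditions on $\{\theta=\pi/4\}$, is invertible precisely for $\lambda\notin\Lambda$; and the norm (\ref{WeightedNorms}) behaves at infinity like the Kondratiev norm of index $\beta$, the gradient being weighted by $(1+\rho^2)^\beta\sim\rho^{2\beta}$. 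Feeding this into the classical isomorphism theorem for the model problem and gluing it to the bounded-part estimate via a partition of unity $1=\psi_2+\chi_2$, I would obtain, as soon as the line $\{\Re e\,\lambda=\beta\}$ avoids $\Lambda$ (equivalently, since $\Lambda=-\Lambda$ by (\ref{SingExp}), as soon as the line $\{\Re e\,\lambda=-\beta\}$ avoids $\Lambda$), the estimate
\[
\|u\|_{\mVc^1_\beta(\Xi)}\ \le\ C\,\big(\,\|\mathfrak{B}_\beta u\|_{\mathring{\mVc}^1_{-\beta}(\Xi)^*}+\|u\|_{\mL^2(\mrm{D}(O,2)\cap\Xi)}\,\big),\qquad\forall u\in\mathring{\mVc}^1_\beta(\Xi),
\]
together with the analogous estimate for the adjoint operator $\mathfrak{B}_{-\beta}$ (a problem of the same type, for which the condition on $\Lambda$ is unchanged because $\Lambda=-\Lambda$). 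Since the restriction $\mathring{\mVc}^1_\beta(\Xi)\to\mL^2(\mrm{D}(O,2)\cap\Xi)$ is compact by Rellich's theorem, this yields that $\mathfrak{B}_\beta$ has finite-dimensional kernel, closed range and finite-dimensional cokernel, i.e.\ that it is Fredholm.

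For the converse I would argue by contradiction: if some $\lambda_0\in\Lambda$ satisfies $\Re e\,\lambda_0=\beta$, with associated angular eigenfunction $\phi_0$, I would build a singular sequence by localising the model solution $\rho^{\lambda_0}\phi_0(\theta)$ with a logarithmically wide cut-off supported in an annulus $\{\rho_n\le\rho\le\rho_n^2\}$, $\rho_n\to+\infty$, normalised so that $\|u_n\|_{\mVc^1_\beta(\Xi)}=1$. Because $\div(\sigma^0\nabla(\rho^{\lambda_0}\phi_0))=0$, the function $\mathfrak{B}_\beta u_n$ only involves the derivatives of the cut-off, hence is supported in two annuli of bounded logarithmic width, and one checks $\|\mathfrak{B}_\beta u_n\|_{\mathring{\mVc}^1_{-\beta}(\Xi)^*}\to0$; as $u_n$ vanishes identically on $\mrm{D}(O,2)\cap\Xi$ for $n$ large, this contradicts any estimate of the previous form. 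Hence $\mathfrak{B}_\beta$ cannot be Fredholm, which closes the equivalence.

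The only genuinely delicate point is the identification of the invertibility set of the model pencil on $K$ with $\Cplx\setminus\Lambda$ — that is, the reduction to the one-dimensional transmission eigenvalue problem on the arc $(0;\pi)$ and the determination of its spectrum. But this has already been carried out in \cite[\S\,4.1]{BoCC13} (whence (\ref{SingExp})--(\ref{SingularExponents2})), so here it suffices to quote it; everything else is the routine transcription of the conical-outlet theory to the geometry $\Xi$, and I do not anticipate any serious obstacle there.
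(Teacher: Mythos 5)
Your argument is correct in outline, but it takes a genuinely different route from the paper. The paper settles this proposition in one line: it applies the inversion $\bfx\mapsto\bfx/\vert\bfx\vert^{2}$, which in 2D is conformal and therefore turns the transmission problem in $\Xi$ (conical outlet at infinity) into a problem of exactly the same type with a corner at the origin, exchanging the weighted behaviour at infinity with Kondratiev behaviour at the vertex and the exponents $\lambda$ with $-\lambda$; the statement is then read off from Proposition \ref{FredholmnessFarField}, using $\Lambda=-\Lambda$. You instead re-run the conical-exit Kondratiev machinery directly: local estimates on the bounded part (valid for every $\kappa_{\sigma}\neq-1$ because $\Gamma$ is smooth and meets $\partial\Xi$ orthogonally at $O$), the Mellin/model estimate at infinity governed by the pencil spectrum $\Lambda$ computed in \cite[\S 4.1]{BoCC13}, a partition of unity to glue these into an a priori estimate with compact remainder for $\mathfrak{B}_{\beta}$ and for its adjoint $\mathfrak{B}_{-\beta}$, and a singular (Weyl) sequence for the converse. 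This is longer and leans on the same underlying ingredients (Kondratiev theory plus the computation of $\Lambda$), but it is self-contained, it does not presuppose the far-field result, and it produces as a by-product the analogue in $\Xi$ of estimate (\ref{estimPropositionFFF}), which the paper never states but uses implicitly later.

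One step needs repair as written. For the norm (\ref{WeightedNorms}), the borderline behaviour at infinity in $\mVc^{1}_{\beta}(\Xi)$ is $\Re e\,\lambda=-\beta$: a function $\rho^{\lambda}\phi(\theta)$ cut off to $\{\rho_{n}\leq\rho\leq\rho_{n}^{2}\}$ has logarithmically growing $\mVc^{1}_{\beta}$-norm only in that case. So the singular sequence must be built on an exponent with $\Re e\,\lambda=-\beta$, i.e.\ on $-\lambda_{0}\in\Lambda$ (with its own angular function), not on $\rho^{\lambda_{0}}$ with $\Re e\,\lambda_{0}=\beta$; for $\beta\neq 0$ your truncated function concentrates at the outer cut-off region, the commutator terms there are of the same order as $\Vert u_{n}\Vert_{\mVc^{1}_{\beta}(\Xi)}$, and the quotient $\Vert\mathfrak{B}_{\beta}u_{n}\Vert_{\mathring{\mVc}^{1}_{-\beta}(\Xi)^{*}}/\Vert u_{n}\Vert_{\mVc^{1}_{\beta}(\Xi)}$ does not tend to zero. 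Since you already invoked the symmetry $\Lambda=-\Lambda$, this is a one-line fix and does not affect the conclusion.
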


\noindent
The proof of this proposition actually boils down to the proof of Proposition 
\ref{FredholmnessFarField}, after applying the transformation $\bfx
\mapsto \bfx/\vert\bfx\vert^{2}$. Like in \S\ref{StudySing}, the operators 
$\mathfrak{B}_{\beta}$ can be studied by means of Kondratiev's theory. 
Let us set
\begin{equation}\label{SingChPr}
\Sgpm(\rho,\theta)\;=\;\chi(\rho)\rho^{\pm i \mu}\phi(\theta).
\end{equation}
Here, $(\pm i\mu,\phi)\in\Cplx\times\mH^{1}_{0}(0;\pi)$ are the same pairs as in (\ref{DefSing}) and  $\chi$ is introduced with Figure \ref{cut-off functions} (let us remind that $\chi(\rho)=0$ for $\rho\le 1$ and  $\chi(\rho)=1$ for $\rho\ge 2$). Note that  $\Sgpm\in \mathring{\mVc}^{1}_{-1}(\Xi)$. 
Once again, transforming problem considered in Proposition 
\ref{FredholmnessNearField} by means of the map $\bfx\mapsto \bfx/\vert\bfx\vert^{2}$, we can prove the following result using the same arguments as for Proposition \ref{KernelGen}.

\begin{proposition}\label{DiffKernels}
There exists a unique (modulo $\Ker\,\mathfrak{B}_{+1}$) element $Z\in \Ker\,\mathfrak{B}_{-1}\setminus \Ker\,\mathfrak{B}_{+1}$
admitting the decomposition $Z=\Sgp + c_{Z}\Sgm + \tilde{Z}$ for some constant $c_{Z}\in\Cplx$ and some $\tilde{Z}\in \mathring{\mVc}^{1}_{\beta}(\Xi)$ for all $\beta\in(0;2)$. Moreover, we have $|c_{Z}|=1$. 
\end{proposition}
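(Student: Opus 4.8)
The plan is to transplant Proposition~\ref{KernelGen} from the domain $\Omega$ to the infinite sector $\Xi$ via the Kelvin-type inversion $\bfx\mapsto\bfx/|\bfx|^2$, as suggested by the remark following Proposition~\ref{FredholmnessNearField}, and then to mimic, almost verbatim, the argument that was already carried out for $\mathfrak{L}_{\pm1}$. Concretely, I would first record the structural facts that make the argument of Proposition~\ref{KernelGen} go through: the operator $\mathfrak{B}_{-1}$ (respectively $\mathfrak{B}_{+1}$) plays the role of $\mathfrak{L}_{+1}$ (respectively $\mathfrak{L}_{-1}$) because inversion swaps the behaviour at $O$ with the behaviour at infinity, so an element of $\mathring{\mVc}^{1}_{-1}(\Xi)$ in the $\Xi$-picture corresponds to an element of $\mathring{\mV}^{1}_{+1}(\Omega)$ in the $\Omega$-picture, and the singularities $\Sgpm(\rho,\theta)=\chi(\rho)\rho^{\pm i\mu}\phi(\theta)$ correspond to $\sgpm$. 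By Kondratiev's theory (Proposition~\ref{FredholmnessNearField} together with the analogue of Proposition~\ref{WeightExp} obtained by inversion), any $v\in\mathring{\mVc}^{1}_{-1}(\Xi)$ with $\mathfrak{B}_{-1}v\in\mathring{\mVc}^{1}_{-1}(\Xi)^{*}$ admits a decomposition $v=c_{+}\Sgp+c_{-}\Sgm+\tilde v$ with $\tilde v\in\mathring{\mVc}^{1}_{+1}(\Xi)$, i.e.\ only the far-field singular exponents $\pm i\mu$ survive.

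Next I would introduce, on $D(\mathfrak{B}^{*})$ (the adjoint of the symmetric realization of $-\div(\sigma^{\infty}\nabla\cdot)$ with domain modeled on the far-field regular space), the anti-hermitian form $Q(u,v):=(\mathfrak{B}^{*}u,v)_{\mrm{L}^2(\Xi)}-(u,\mathfrak{B}^{*}v)_{\mrm{L}^2(\Xi)}$ and the functionals $\Pi_{\pm}$ built from $Q$ exactly as $q$ and $\pi_{\pm}$ were built in (\ref{Simplectic})--(\ref{defintionPipm}). The same computation as in Proposition~\ref{PropositionPrtiesFormeq} gives $Q(\Sgp,\Sgm)=0$ and $Q(\Sgp,\Sgp)=-Q(\Sgm,\Sgm)\ne0$, using that $\overline{\Sgp}=\Sgm$ and that $\mathfrak{B}^{*}$ fails to be self-adjoint (which itself follows from the inversion transplant of Proposition~\ref{Adjoint}). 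Then, for $Z\in\Ker\,\mathfrak{B}_{-1}\setminus\Ker\,\mathfrak{B}_{+1}$ one has $Z\in D(\mathfrak{B}^{*})$, so $0=Q(Z,Z)=Q(\Sgp,\Sgp)(|\Pi_{+}(Z)|^2-|\Pi_{-}(Z)|^2)$, forcing $|\Pi_{+}(Z)|=|\Pi_{-}(Z)|$. This time the normalization is chosen so that the coefficient of $\Sgp$ equals $1$: after scaling, $Z=\Sgp+c_Z\Sgm+\tilde Z$; the identity $|\Pi_{+}(Z)|=|\Pi_{-}(Z)|$ (together with $\Pi_{\pm}$ reading off exactly the coefficients $c_{\pm}$ up to the nonzero factor $Q(\Sgpm,\Sgpm)$) yields $|c_Z|=1$. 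Uniqueness modulo $\Ker\,\mathfrak{B}_{+1}$ is obtained as in Proposition~\ref{KernelGen}: given a second such $Z'=\Sgp+c_{Z'}\Sgm+\tilde Z'$, compute $0=Q(Z',Z)=Q(\Sgp,\Sgp)(1-c_{Z'}\overline{c_Z})$ so $c_{Z'}=c_Z$ (using $|c_Z|=1$), hence $Z'-Z$ has no $\Sgpm$-component and therefore lies in $\mathring{\mVc}^{1}_{+1}(\Xi)$, i.e.\ in $\Ker\,\mathfrak{B}_{+1}$.

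One point requiring care, and the main potential obstacle, is the existence part: one must know that $\Ker\,\mathfrak{B}_{-1}\ne\Ker\,\mathfrak{B}_{+1}$, i.e.\ that there genuinely exists an element of $\Ker\,\mathfrak{B}_{-1}$ whose far-field asymptotics contains the $\Sgpm$ terms with the $\Sgp$-coefficient nonzero. In the $\Omega$-setting this was quoted from step~2 of the proof of \cite[Thm.\,4.4]{BoCC13}; here I would transport that statement through the inversion, checking that the inversion maps $\Ker\,\mathfrak{B}_{-1}$ isomorphically onto $\Ker\,\mathfrak{L}_{+1}$ and $\Ker\,\mathfrak{B}_{+1}$ onto $\Ker\,\mathfrak{L}_{-1}$, so that $\Ker\,\mathfrak{B}_{-1}\ne\Ker\,\mathfrak{B}_{+1}$ is equivalent to the already-established $\Ker\,\mathfrak{L}_{+1}\ne\Ker\,\mathfrak{L}_{-1}$. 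A second technicality is verifying that the Kelvin inversion really does interchange the weighted spaces as claimed: one must track how $(1+\rho^2)^{\beta+|\alpha|-k}$ transforms under $\bfxi\mapsto\bfxi/|\bfxi|^2$ and confirm that it matches (up to equivalence of norms) the weight $r^{2(\beta+|\alpha|-k)}$ near $O$ combined with $\mrm{L}^2$ regularity at infinity, and that $\sigma^{\infty}$ is invariant under the inversion because it is homogeneous of degree $0$. Once these two checks are in place, everything else is the routine symplectic-form bookkeeping copied from Propositions~\ref{PropositionPrtiesFormeq}, \ref{DescribingSAE} and \ref{KernelGen}.
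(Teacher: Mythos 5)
Your overall route is the one the paper itself takes: its proof of Proposition~\ref{DiffKernels} consists precisely of the remark that one transforms the problem of Proposition~\ref{FredholmnessNearField} by the Kelvin map $\bfx\mapsto\bfx/\vert\bfx\vert^{2}$ and repeats the arguments of Proposition~\ref{KernelGen}, and your symplectic bookkeeping (normalisation of the $\Sgp$-coefficient, $|c_Z|=1$, uniqueness modulo $\Ker\,\mathfrak{B}_{+1}$) is exactly that argument. However, two of the identifications you lean on would fail if executed literally. First, the inversion does \emph{not} carry the sector problem onto the problem in $\Omega$: it carries it onto a problem in the half-plane whose interface has the $\pi/4$-corner at the origin and is asymptotically perpendicular to the boundary at infinity. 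Consequently $\Ker\,\mathfrak{B}_{-1}$ is not isomorphic to $\Ker\,\mathfrak{L}_{+1}$ (these are kernels of operators in different geometries, and their dimensions have no reason to agree), so the existence step cannot be borrowed from the already established $\Ker\,\mathfrak{L}_{+1}\neq\Ker\,\mathfrak{L}_{-1}$ on $\Omega$. What does transfer is the \emph{argument}: either re-run step 2 of the proof of \cite[Thm.\,4.4]{BoCC13} in the inverted geometry, or use the index computation that the paper carries out later in Proposition~\ref{PropoFarField}: $\ind\,\mathfrak{B}_{-1}=-\ind\,\mathfrak{B}_{+1}$ together with the jump of index equal to $2$ across the strip containing $\pm i\mu$ gives $\ind\,\mathfrak{B}_{-1}=1$, hence $\dim\Ker\,\mathfrak{B}_{-1}=\dim\Ker\,\mathfrak{B}_{+1}+1$ and the strict inclusion $\Ker\,\mathfrak{B}_{+1}\subsetneq\Ker\,\mathfrak{B}_{-1}$, with no reference to $\Omega$ at all.

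Second, the form $Q(u,v)=(\mathfrak{B}^{*}u,v)_{\mrm{L}^{2}(\Xi)}-(u,\mathfrak{B}^{*}v)_{\mrm{L}^{2}(\Xi)}$ cannot be defined the way you propose: $\Sgpm$ and any $Z\in\Ker\,\mathfrak{B}_{-1}\setminus\Ker\,\mathfrak{B}_{+1}$ are not square integrable on the unbounded sector (since $\vert\rho^{\pm i\mu}\vert=1$), so there is no $\mrm{L}^{2}(\Xi)$-realisation of $-\div(\sigma^{\infty}\nabla\cdot)$ whose adjoint domain contains them, and the same obstruction reappears after inversion because the image domain is again unbounded. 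The standard remedy keeps all of your computations intact: define the anti-hermitian pairing by Green's formula on truncated domains, $Q(u,v)=\int_{\Xi\cap\partial\mrm{D}(O,R)}\sigma^{\infty}\left(\partial_{\rho}u\,\overline{v}-u\,\partial_{\rho}\overline{v}\right)ds$, which is independent of $R$ (large) when $u,v$ solve the homogeneous problem, and evaluate it on the asymptotics at infinity. A direct computation with the normalisation $\mu\int_{0}^{\pi}\sigma^{0}(\theta)\phi(\theta)^{2}d\theta=1$ then gives $Q(\Sgpm,\Sgpm)=\pm 2i\neq0$ and $Q(\Sgp,\Sgm)=0$ (so you do not even need the indirect ``non-self-adjointness'' detour), after which your normalisation of the $\Sgp$-coefficient, the identity $|c_Z|=1$ and the uniqueness argument go through verbatim as in Proposition~\ref{KernelGen}.
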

\noindent  From the above result, we infer that $\Ker\,\mathfrak{B}_{-1} = \mrm{span}\{
Z\}\oplus \Ker\,\mathfrak{B}_{+1}$. In the sequel, the following definition will be convenient. 
\begin{definition}
We call $\delta_{\bullet}$ the largest element of $(0;1]$ such that $\delta_{\bullet}^{-2i\mu}=c_{Z}$.
\end{definition}

\noindent  Depending on the parameters $\sigma_{\pm}$ and on the domains $\Xi_{\pm}$, it can happen that the operator $\mathfrak{B}_{+1}$ gets a non trivial 
(finite dimensional) kernel. We discard this possibility, considering an additional 
assumption
\begin{Assumption}\label{assumption1}
The operator $\mathfrak{B}_{+1}$ is injective.
\end{Assumption}

\noindent For a concrete case where this assumption is satisfied, one may for example consider the situation of Section \ref{Numerics}. It is not clear under which precise circumstances Assumption \ref{assumption1} is satisfied, and this point is already the subject of 
contributions in the current literature, see \cite{Grie14}. Another (open) question is wether or not the subsequent analysis can be 
carried out without this assumption. Nevertheless, as is shown in Section \ref{Numerics}, there are relevant geometries which comply with this assumption.\\
\newline
\noindent We will choose the function $\breve{U}^{\delta}$, which must satisfy equations (\ref{NearFieldEq}), equal to $c^{\delta}_{N}Z$. Here, $c^{\delta}_{N}$ is constant depending on $\delta$. Therefore, up to some remainder with respect to $\delta$, the field in the inner region admits the following behaviour as $\rho=r/\delta\to+\infty$:
\begin{equation}\label{dvptFOF}
\begin{array}{lcl}
\breve{U}^{\delta} & = & c^{\delta}_{N}\Sgp +c^{\delta}_{N}c_{Z}\Sgm +\cdots\\[4pt]
  & = &\dsp  c^{\delta}_{N}\left(\frac{r}{\delta}\right)^{+i\mu}\phi(\theta) +c^{\delta}_{N}\delta_{\bullet}^{-2i\mu}\left(\frac{r}{\delta}\right)^{-i\mu}\phi(\theta) +\cdots \ .
\end{array}
\end{equation}
\subsection{Matching of asymptotics and definition of the model operator}\label{paragraphMatching}
To conclude the construction of the first terms of the asymptotic expansion of the eigenpair $(\lambda^{\delta}_m,u^{\delta})$, we apply the matching procedure. In the present case, it consists in equating expansion (\ref{dvptFFE}), (\ref{dvptFOF}) as $r\to0$ and $r/\delta\to+\infty$:
\[
c^{\delta}_{+}\,r^{+i\mu}\phi(\theta) +c^{\delta}_{-}\,r^{-i\mu}\phi(\theta)+\cdots\quad = \quad\dsp  c^{\delta}_{N}\left(\frac{r}{\delta}\right)^{+i\mu}\phi(\theta) +c^{\delta}_{N}\delta_{\bullet}^{-2i\mu}\left(\frac{r}{\delta}\right)^{-i\mu}\phi(\theta)+\cdots\ .
\]
Since $r\mapsto r^{+i\mu}$ and $r\mapsto r^{-i\mu}$ are independent functions and since $\phi\neq 0$, 
this yields the identities $c^{\delta}_{+} = c^{\delta}_{N}\delta^{-i\mu}$, 
$c^{\delta}_{-} = c^{\delta}_{N}\delta_{\bullet}^{-2i\mu}\delta^{+i\mu}$, and, as a byproduct, the relation  $c^{\delta}_{-} = (\delta/\delta_{\bullet})^{2i\mu}c^{\delta}_{+}$. Therefore,  according to (\ref{NearFieldApprox}), $\breve{u}^{\delta}$ satisfies
\begin{equation}\label{relationMatching}
\breve{u}^{\delta} -c^{\delta}_{N}\delta^{-i\mu}(\sgp +(\delta/\delta_{\bullet})^{2i\mu}\sgm)\in D(\mathfrak{A}).
\end{equation}
This leads us to introduce the model operator $\mathfrak{A}^{\delta}\subset\mathfrak{A}^{*}$ such that 
\begin{equation}\label{defModelOp}
\begin{array}{|l}
\mathfrak{A}^{\delta}v\;=\; -\mrm{div}(\sigma^{0}\nabla v)\\[6pt]
D(\mathfrak{A}^{\delta}) = \mrm{span}\{\,\sgp + (\delta/\delta_{\bullet})^{2i\mu}\sgm\,\}\oplus D(\mathfrak{A}).
\end{array}
\end{equation}
With this definition, we observe from (\ref{relationMatching}) that $\breve{u}^{\delta}\in D(\mathfrak{A}^{\delta})$. Remembering that $(\eta^{\delta}_m,\breve{u}^{\delta})$ has to satisfy (\ref{LimitPb}), we deduce that the matching procedure imposes that $(\eta^{\delta}_m,\breve{u}^{\delta})$ be an eigenpair of the operator $\mathfrak{A}^{\delta}$. Since $|(\delta/\delta_{\bullet})^{2i\mu}|=1$, according to Proposition \ref{DescribingSAE}, we know that $\mathfrak{A}^{\delta}$ is self-adjoint for all $\delta\in(0;1]$ (note that with Definition (\ref{definitionOperateurAtau}), we have $\mathfrak{A}^{\delta}=\mathfrak{A}(2\mu\ln(\delta/\delta_{\bullet})$). We shall denote $\{\eta^{\delta}_{j}\}_{j\in\Z}$ the set of ordered eigenvalues (counted with their multiplicities) of $\mathfrak{A}^{\delta}$. As a consequence of Proposition 
\ref{BoundedEigenvalues}, for each fixed $j\in\Z$, the function $\delta\mapsto \eta_{j}^{\delta}$ is bounded
over $(0;1]$.
\begin{remark}\label{spectrumPeriodic}
When $\delta_1$, $\delta_2>0$ are such that $(\delta_1/\delta_{\bullet})^{2i\mu}=(\delta_2/\delta_{\bullet})^{2i\mu} \Leftrightarrow \ln\delta_1=\ln\delta_2+k\pi/\mu$ for some $k\in\Z$, we have $D(\mathfrak{A}^{\delta_1})=D(\mathfrak{A}^{\delta_2})$, so that $\mathfrak{A}^{\delta_1}=\mathfrak{A}^{\delta_2}$ and $\spec(\mathfrak{A}^{\delta_1})=\spec(\mathfrak{A}^{\delta_2})$. Therefore, for each fixed $j\in\Z$, the map $\delta\mapsto \eta_{j}^{\delta}$ is $\pi/\mu$-periodic in $\ln\delta$-scale.
\end{remark}
\begin{remark}
According to Proposition \ref{KernelGen}, there is a unique (modulo $\Ker\,\mathfrak{L}_{-1}\subset D(\mathfrak{A})$) element $\zeta\in D(\mathfrak{A}^{\ast})\setminus D(\mathfrak{A})$ admitting the decomposition $\zeta=\sgm+c_{\zeta}\,\sgp+\tilde{\zeta}$ with $c_{\zeta}\in\Cplx$ such that $|c_{\zeta}|=|\pi_{+}(\zeta)|=1$, and $\tilde{\zeta}\in D(\mathfrak{A})$. When $\delta>0$ is such that $(\delta/\delta_{\bullet})^{2i\mu}=c_{\zeta}$, the function $\zeta$ belongs to $D(\mathfrak{A}^{\delta})$. In this case, we have $0\in\spec(\mathfrak{A}^{\delta})$. Together with Remark \ref{spectrumPeriodic}, this shows that the set of values of $\delta$ such that $\mathfrak{A}^{\delta}$ is not injective accumulates at zero. Figure \ref{UnitCircleRun} illustrates the phenomenon. 
\begin{figure}[!ht]
\centering
\begin{tikzpicture}[scale=0.7]
\draw[line width=0.2mm] (0,0) circle (2cm);
\draw[dotted,->](-2.3,0)--(2.5,0);
\draw[dotted,->](0,-2.3)--(0,2.4);
\draw[dotted,<->](0,0)--(-1.41421356237,-1.41421356237);
\node at(1.25,2){\small$c_{\zeta}$};
\node at(-0.5,-0.85){\small$1$};
\node at(3,-1.2){\small $(\delta/\delta_{\bullet})^{2i\mu}$};
\draw[dashed,<-] (0,0) ++(320:2.2) arc (320:340:2.2);
\draw[fill=gray,draw=none] (0,0) ++(330:2) circle (0.6mm);
\draw[rotate around={60:(0,0)}] (1.9,0)--(2.1,0);
\end{tikzpicture}
\caption{As $\delta$ goes to zero, $(\delta/\delta_{\bullet})^{2i\mu}$ runs on the unit circle and hits $c_{\zeta}$ infinitely many times.\label{UnitCircleRun}}
\end{figure}
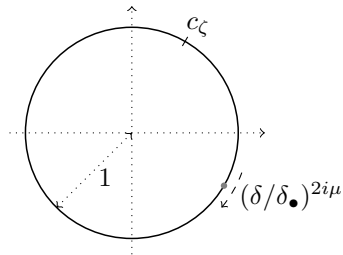
\end{remark}

\section{Main theorem}\label{SectionMainThm}

The formal asymptotic expansion above suggests that $\mathfrak{S}(\mA^{\delta})$, the spectrum of the original operator in the geometry with a rounded corner, behaves as $\mathfrak{S}(\mathfrak{A}^{\delta})$, the spectrum of the model operator, as $\delta$ goes to zero. To prove this result, we would like to show estimates on the inverses of these operators.  
However, this is not possible because the set of $\delta$ such that $\mathfrak{A}^{\delta}$ is not invertible accumulates at zero. To circumvent this problem, we will shift the spectrum in the complex plane working with $\mA^{\delta}+i\mrm{Id}$, $\mathfrak{A}^{\delta}+i\mrm{Id}$ instead of $\mA^{\delta}$, $\mathfrak{A}^{\delta}$. To begin with, we state an important result whose technical proof is postponed to the next section.
\begin{theorem}\label{propoErrorEtsimate}
Under Assumption \ref{assumption1}, for any $\eps>0$, there is a constant $C_{\eps}$ independent of $\delta\in(0;1]$ such that 
\begin{equation}\label{error estimate result L2}
\sup_{f\in\mL^{2}(\Omega)\setminus\{0\}}\frac{\Vert (\mA^{\delta}+i\mrm{Id})^{-1}f - (\mathfrak{A}^{\delta}+i\mrm{Id})^{-1}f
\Vert_{\mL^{2}(\Omega)}}{\Vert f\Vert_{\mL^{2}(\Omega)}}\;\leq \;C_{\eps}\,\delta^{1-\eps}.
\end{equation}
\end{theorem}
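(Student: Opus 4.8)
The plan is to compare the two resolvents $(\mA^{\delta}+i\mrm{Id})^{-1}$ and $(\mathfrak{A}^{\delta}+i\mrm{Id})^{-1}$ by constructing, for a given $f\in\mL^{2}(\Omega)$, an approximate solution of the problem $(\mA^{\delta}+i\mrm{Id})u^{\delta}=f$ out of the exact solution $\breve u^{\delta}:=(\mathfrak{A}^{\delta}+i\mrm{Id})^{-1}f$ of the model problem, following the standard matched-asymptotics scheme already set up in Section \ref{FormalAsymp}. Concretely, I would first decompose $\breve u^{\delta}=c_{N}^{\delta}\delta^{-i\mu}(\sgp+(\delta/\delta_{\bullet})^{2i\mu}\sgm)+\tilde u^{\delta}$ with $\tilde u^{\delta}\in D(\mathfrak{A})\subset\mathring{\mV}^{1}_{-1}(\Omega)$, and, on the inner side, form the near-field corrector $c_{N}^{\delta}Z(\bfx/\delta)$ where $Z$ is the kernel element of Proposition \ref{DiffKernels}. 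I would then glue the far-field part and the rescaled near-field part with the cut-off functions $\psi_{t},\chi_{t}$ introduced after Figure \ref{cut-off functions}, choosing the matching radius $t$ as an appropriate power of $\delta$ (a compromise of the form $t\sim\delta^{\alpha}$ with $0<\alpha<1$), thereby producing a global function $w^{\delta}\in D(\mA^{\delta})$.

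The core of the argument is then the estimate of the consistency error $(\mA^{\delta}+i\mrm{Id})w^{\delta}-f$ in $\mL^{2}(\Omega)$. This error is supported in the overlap annulus where the cut-offs have nonzero gradient and in $\mrm{D}(O,2t)$; it splits into (i) commutator terms $[\sigma^{0}\nabla,\psi_{t}]$ and $[\sigma^{0}\nabla,\chi_{t}]$ acting on the discrepancy between the far-field and near-field expansions, (ii) the error made by replacing $\sigma^{\delta}$ by $\sigma^{0}$ away from $O$ and by $\sigma^{\infty}$ near $O$ — which vanishes identically thanks to the self-similarity assumption of \S\ref{paragraph geom} on $\mrm{D}(O,\delta)$ — and (iii) lower-order terms coming from $\lambda$-type contributions $\delta^{2}(\cdots)$ on the inner scale and from the remainders in the Kondratiev decompositions. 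Using that near $O$ both $\breve u^{\delta}$ and $c_{N}^{\delta}Z(\cdot/\delta)$ behave like $r^{\pm i\mu}\phi(\theta)$ up to remainders in $\mathring{\mV}^{1}_{-1}$ and $\mathring{\mVc}^{1}_{\beta}$ respectively (for $\beta\in(0;2)$), the overlap discrepancy on the annulus $t<r<2t$ is $O(t^{\min(1,\beta)})$ in the relevant weighted norms; optimising in $t=\delta^{\alpha}$ yields a bound of order $\delta^{1-\eps}$ for any $\eps>0$, provided one also controls the weight $\delta$-powers coming from rescaling (this is precisely where the exponent $1-\eps$, rather than $1$, appears). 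Crucially the constant $c_{N}^{\delta}$, hence $\|\breve u^{\delta}\|$, must be controlled uniformly in $\delta$ by $\|f\|_{\mL^{2}(\Omega)}$; this is where Assumption \ref{assumption1} enters, via a uniform-in-$\delta$ stability estimate for $\mathfrak{A}^{\delta}+i\mrm{Id}$ of the type (\ref{estimationResolv}) together with a uniform inf-sup / a priori estimate for $\mA^{\delta}+i\mrm{Id}$ in the weighted spaces — the uniformity being exactly the delicate point, since the naive $\mH^1_0$ estimate degenerates as $\delta\to0$. I expect this is the substance of the ``important intermediate proposition'' announced in Section \ref{SectionSourceTermpb}, so in this proof I would invoke it as a black box: namely that $\|u^{\delta}\|_{\mL^{2}(\Omega)}\le C(\|(\mA^{\delta}+i\mrm{Id})u^{\delta}\|_{\mL^{2}(\Omega)}+\text{controlled remainder})$ with $C$ independent of $\delta$.

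Finally, the conclusion. Writing $u^{\delta}:=(\mA^{\delta}+i\mrm{Id})^{-1}f$, one has $(\mA^{\delta}+i\mrm{Id})(u^{\delta}-w^{\delta})=f-(\mA^{\delta}+i\mrm{Id})w^{\delta}$, whose right-hand side is $O(\delta^{1-\eps})\|f\|_{\mL^{2}(\Omega)}$ by the consistency estimate; the uniform stability estimate then gives $\|u^{\delta}-w^{\delta}\|_{\mL^{2}(\Omega)}\le C_{\eps}\delta^{1-\eps}\|f\|_{\mL^{2}(\Omega)}$. Since by construction $\|w^{\delta}-\breve u^{\delta}\|_{\mL^{2}(\Omega)}=\|w^{\delta}-(\mathfrak{A}^{\delta}+i\mrm{Id})^{-1}f\|_{\mL^{2}(\Omega)}$ is itself $O(\delta^{1-\eps})\|f\|_{\mL^{2}(\Omega)}$ — it consists of the cut-off tails of the singular profiles $\sgpm$ and of $Z$, which carry explicit $\delta$-powers — the triangle inequality yields (\ref{error estimate result L2}) with $C_{\eps}$ independent of $\delta\in(0;1]$. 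The main obstacle, and the part requiring the most care, is establishing the $\delta$-uniform a priori estimate for $\mA^{\delta}+i\mrm{Id}$: the operator $\mA^{\delta}$ is self-adjoint but non-semibounded, the relevant stability cannot come from coercivity, and one must instead propagate the weighted Kondratiev estimates through the boundary layer uniformly in $\delta$, which is exactly what Assumption \ref{assumption1} is designed to make possible.
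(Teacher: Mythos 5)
Your overall architecture coincides with the paper's outer layer: you glue the exact model solution $v^{\delta}=(\mathfrak{A}^{\delta}+i\mrm{Id})^{-1}f$ in the far field to the kernel element $Z(\cdot/\delta)$ in the near field (this is exactly the paper's ``second asymptotic expansion'' $\breve{\mrm{R}}^{\delta}f$ of (\ref{constructionUtilde})), estimate the residual, conclude by stability, and finally compare the glued function with $v^{\delta}$ itself. The genuine gap is that you invoke the uniform-in-$\delta$ stability estimate for $(\mA^{\delta}+i\mrm{Id})^{-1}$ as a black box, whereas this estimate is the core of the proof of the very statement you are proving and is available nowhere else: it is Proposition \ref{proposition stability estimate}, whose proof occupies \S\ref{paragraphFirstAsymptotic}--\S\ref{paragraphStability}. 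The paper obtains it by building a \emph{different} approximate inverse $\hat{\mrm{R}}^{\delta}$ directly from the data: the source is split as $f=\chi_{\sqrt{\delta}}f+\psi_{\sqrt{\delta}}f$, the far-field part is solved by Proposition \ref{defFarField} in the space containing $\sgp+(\delta/\delta_{\bullet})^{2i\mu}\sgm$, the near-field source problem in $\Xi$ is solved thanks to Assumption \ref{assumption1} (Proposition \ref{PropoFarField}; this produces the corrector $U$ that your construction omits), and one then shows $(\mA^{\delta}+i\mrm{Id})\hat{\mrm{R}}^{\delta}=\mrm{Id}+\hat{\mrm{K}}^{\delta}$ with $\hat{\mrm{K}}^{\delta}$ a contraction for the $\delta$-dependent weighted norm $\|(r+\delta)^{1-\beta}\cdot\|_{\mL^{2}(\Omega)}$, so that a Neumann series yields the stability bound. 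Saying that Assumption \ref{assumption1} ``is designed to make this possible'' does not supply this argument; note also that Assumption \ref{assumption1} enters through the near-field solvability, not through the resolvent estimate for $\mathfrak{A}^{\delta}+i\mrm{Id}$, which only needs injectivity (the imaginary-part argument in the proof of Proposition \ref{defFarField}).

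Moreover, the black-box estimate in the form you state it, with the residual measured in plain $\mL^{2}$, would not suffice even for your own construction. Since your glued function involves no near-field solve of the source problem, its consistency error necessarily contains the term $-\psi_{\delta}f$, i.e.\ the part of the source near the corner that the cut-off far-field solution does not see; for $f$ concentrated in $\mrm{D}(O,2\delta)$ this term has $\mL^{2}$-norm comparable to $\|f\|_{\mL^{2}(\Omega)}$, hence is not $O(\delta^{1-\eps})\|f\|_{\mL^{2}(\Omega)}$. Likewise the commutator terms on the matching annulus carry factors $\delta^{-1},\delta^{-2}$ against remainders that are only $O(\delta^{1+\beta})$ there, so they are not small in plain $\mL^{2}$ either. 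Both are small precisely in the weighted norm $\mV^{0}_{1-\beta,\delta}(\Om)$, e.g.\ $\|\psi_{\delta}f\|_{\mV^{0}_{1-\beta,\delta}(\Om)}\le C\,\delta^{1-\beta}\|f\|_{\mL^{2}(\Om)}$, which is why the stability estimate must be taken in the form $\|(\mA^{\delta}+i\mrm{Id})^{-1}h\|_{\mL^{2}(\Om)}\le C\,\|h\|_{\mV^{0}_{1-\beta,\delta}(\Om)}$ uniformly in $\delta$, with the weighted data norm, and not as the plain $\mL^{2}$-to-$\mL^{2}$ bound you write. Once that proposition is actually proved, the remaining steps of your sketch (consistency in the weighted norm, stability, and the final comparison $\|\breve{u}^{\delta}-v^{\delta}\|_{\mL^{2}(\Om)}\le C\,\delta^{2-\eps}\|f\|_{\mL^{2}(\Om)}$ followed by the triangle inequality) do reproduce Proposition \ref{proposition error estimate} and \S\ref{paragraphFinalProof}.
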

\noindent The result of this theorem can be rephrased as $(\mA^{\delta}+i\mrm{Id})^{-1} = (\mathfrak{A}^{\delta}+i\mrm{Id})^{-1} + O(\delta^{1-\eps})$ considering $(\mA^{\delta}+i\mrm{Id})^{-1},(\mathfrak{A}^{\delta}+i\mrm{Id})^{-1}$ as operators mapping $\mL^{2}(\Omega)$ to $\mL^{2}(\Omega)$. Since these two operators are normal, we deduce in the next proposition (see the proof in Appendix) that the spectra of $(\mA^{\delta}+i\mrm{Id})^{-1}$ and $(\mathfrak{A}^{\delta}+i\mrm{Id})^{-1}$ are closed to each other. 
\begin{proposition}\label{SpectralConvergence}
Under Assumption \ref{assumption1}, for any $\eps>0$, there is a constant $C_{\eps}$ independent of $\delta\in(0;1]$ such that 
\begin{equation}\label{distanceSpectra}
\sup_{\eta\in \mathfrak{S}(\mathfrak{A}^{\delta})}\mathop{\inf\phantom{p}}_{\lambda\in \mathfrak{S}(\mA^{\delta})}\Big\vert \frac{1}{\lambda+i}-\frac{1}{\eta+i}\Big\vert\; +\; 
\sup_{\lambda\in \mathfrak{S}(\mA^{\delta})}\mathop{\inf\phantom{p}}_{\eta\in \mathfrak{S}(\mathfrak{A}^{\delta})}\Big\vert \frac{1}{\lambda+i}-\frac{1}{\eta+i}\Big\vert\;\leq\; 
C_{\eps}\;\delta^{1-\eps}.
\end{equation}
\end{proposition}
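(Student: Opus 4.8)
The plan is to derive Proposition \ref{SpectralConvergence} from Theorem \ref{propoErrorEtsimate} by invoking the classical stability of the spectrum of a normal operator under a bounded perturbation, and then transporting the resulting estimate back to the spectra of $\mA^{\delta}$ and $\mathfrak{A}^{\delta}$ through the resolvent transform $z\mapsto(z+i)^{-1}$. Set $T_{\delta}:=(\mA^{\delta}+i\mrm{Id})^{-1}$ and $\mathfrak{T}_{\delta}:=(\mathfrak{A}^{\delta}+i\mrm{Id})^{-1}$, regarded as bounded operators on $\mL^{2}(\Omega)$. Since $\mA^{\delta}$ and $\mathfrak{A}^{\delta}$ are self-adjoint with compact resolvent and $-i\notin\R$, the operators $T_{\delta}$ and $\mathfrak{T}_{\delta}$ are well defined, compact, injective and normal, and the spectral mapping theorem gives
\[
\mathfrak{S}(T_{\delta})=\{(\lambda+i)^{-1}\,|\,\lambda\in\mathfrak{S}(\mA^{\delta})\}\cup\{0\},\qquad \mathfrak{S}(\mathfrak{T}_{\delta})=\{(\eta+i)^{-1}\,|\,\eta\in\mathfrak{S}(\mathfrak{A}^{\delta})\}\cup\{0\},
\]
the point $0$ appearing because these are compact operators on the infinite-dimensional space $\mL^{2}(\Omega)$.

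The key lemma I would use is: if $S$ is a bounded normal operator and $R$ a bounded operator on a Hilbert space, then $\sup_{\mu\in\mathfrak{S}(R)}\mrm{dist}(\mu,\mathfrak{S}(S))\le\|R-S\|$. I would prove it by a Neumann series argument: for $\mu\notin\mathfrak{S}(S)$, normality of $S$ yields $\|(S-\mu)^{-1}\|=\mrm{dist}(\mu,\mathfrak{S}(S))^{-1}$, so if $\mrm{dist}(\mu,\mathfrak{S}(S))>\|R-S\|$ then $\|(S-\mu)^{-1}(R-S)\|<1$ and $R-\mu=(S-\mu)\bigl(\mrm{Id}+(S-\mu)^{-1}(R-S)\bigr)$ is boundedly invertible, i.e. $\mu\notin\mathfrak{S}(R)$. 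Applying this with $(R,S)=(T_{\delta},\mathfrak{T}_{\delta})$ and then with $(R,S)=(\mathfrak{T}_{\delta},T_{\delta})$, and using the estimate $\|T_{\delta}-\mathfrak{T}_{\delta}\|_{\mL^{2}(\Omega)\to\mL^{2}(\Omega)}\le C_{\eps}\,\delta^{1-\eps}$ of Theorem \ref{propoErrorEtsimate}, I obtain
\[
\sup_{\mu\in\mathfrak{S}(T_{\delta})}\mrm{dist}\bigl(\mu,\mathfrak{S}(\mathfrak{T}_{\delta})\bigr)+\sup_{\nu\in\mathfrak{S}(\mathfrak{T}_{\delta})}\mrm{dist}\bigl(\nu,\mathfrak{S}(T_{\delta})\bigr)\le 2\,C_{\eps}\,\delta^{1-\eps}.
\]

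It then remains to rewrite the two distances in terms of the eigenvalues of $\mA^{\delta}$ and $\mathfrak{A}^{\delta}$. By Proposition \ref{propoSptDesc} and the analogous statement for the model operator $\mathfrak{A}^{\delta}$, both $\mathfrak{S}(\mA^{\delta})$ and $\mathfrak{S}(\mathfrak{A}^{\delta})$ are unbounded above and below, so $0$ is an accumulation point of each of the sets $\{(\lambda+i)^{-1}\}$ and $\{(\eta+i)^{-1}\}$; adjoining $0$ to these sets therefore does not lower the relevant infima, and for every $\lambda\in\mathfrak{S}(\mA^{\delta})$ one has $\inf_{\eta\in\mathfrak{S}(\mathfrak{A}^{\delta})}\bigl|\frac{1}{\lambda+i}-\frac{1}{\eta+i}\bigr|=\mrm{dist}\bigl(\frac{1}{\lambda+i},\mathfrak{S}(\mathfrak{T}_{\delta})\bigr)$, and symmetrically. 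Since $\{(\lambda+i)^{-1}\,|\,\lambda\in\mathfrak{S}(\mA^{\delta})\}\subset\mathfrak{S}(T_{\delta})$ and $\{(\eta+i)^{-1}\,|\,\eta\in\mathfrak{S}(\mathfrak{A}^{\delta})\}\subset\mathfrak{S}(\mathfrak{T}_{\delta})$, taking suprema over $\lambda$ and over $\eta$ respectively and adding yields exactly (\ref{distanceSpectra}), with $C_{\eps}$ replaced by $2C_{\eps}$, which we relabel. I do not expect a genuine obstacle in this proposition: all the analytic difficulty is concentrated in Theorem \ref{propoErrorEtsimate}, and the only points needing care are the appeal to normality (so that the resolvent norm equals the reciprocal distance to the spectrum) and the harmless bookkeeping around the spectral value $0$ of the compact operators $T_{\delta}$, $\mathfrak{T}_{\delta}$.
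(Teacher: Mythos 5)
Your proposal is correct, and it reaches (\ref{distanceSpectra}) by a slightly different mechanism than the paper. The skeleton is the same as in the Appendix: shift by $i\mrm{Id}$, pass to the resolvents $(\mA^{\delta}+i\mrm{Id})^{-1}$ and $(\mathfrak{A}^{\delta}+i\mrm{Id})^{-1}$, exploit normality, and feed in the $\mL^{2}\to\mL^{2}$ bound of Theorem \ref{propoErrorEtsimate}. The difference lies in the key lemma. The paper proves, via the spectral measure of a normal operator (Lemma \ref{EstimEigenVal}), that $\inf_{\lambda\in\spec(\mrm{A})}\vert\lambda-\mu\vert\le\Vert \mrm{A}v-\mu v\Vert/\Vert v\Vert$ for every $v$, and then applies it with $v$ an eigenvector of the other resolvent; this is a quasimode argument carried out separately for each eigenvalue $\eta$. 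You instead invoke the Bauer--Fike-type stability bound $\sup_{\mu\in\spec(R)}\mrm{dist}(\mu,\spec(S))\le\Vert R-S\Vert$ for $S$ normal, proved by a Neumann series together with the identity $\Vert(S-\mu)^{-1}\Vert=\mrm{dist}(\mu,\spec(S))^{-1}$. The two lemmas are of equivalent strength here (both rest on the spectral theorem for normal operators), but yours acts on the whole spectrum at once and needs no eigenvectors, so it would also cover continuous spectrum, whereas the paper's quasimode version is the one that generalizes when only individual approximate eigenpairs, rather than an operator-norm resolvent estimate, are available. You are also more explicit than the paper about the bookkeeping of the spectral value $0$ of the compact resolvents (the paper dismisses this step with ``Clearly''), and your justification, namely that $0$ is an accumulation point of $\{(\lambda+i)^{-1}\}$ because both spectra are unbounded above and below, is exactly what is needed.
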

\noindent Now, we are ready to establish the main theorem of the paper. 
\begin{theorem}\label{thmMajor}
Assume that $\kappa_{\sigma}\in(-1;-1/3)$.\\
Let $\{\lambda^{\delta}_{j}\}_{j\in\Z}$ refer to the set of ordered eigenvalues of the original operator $\mA^{\delta}$ defined in (\ref{ExOp}). \\
Let $\{\eta^{\delta}_{j}\}_{j\in\Z}$ refer to the set of ordered eigenvalues of the model operator $\mathfrak{A}^{\delta}$ defined in (\ref{defModelOp}).\\
Under Assumption \ref{assumption1}, for any $j\in\mathbb{Z},\,\eps>0$, there is a constant $C_{j,\,\eps}>0$ independent of $\delta\in(0;1]$ such that 
\begin{equation}\label{MainEstimate}
\mathop{\inf}_{\lambda\in\spec(\mA^{\delta})}\vert \eta^{\delta}_{j}-\lambda\vert+\mathop{\inf}_{\eta\in\spec(\mathfrak{A}^{\delta})}\vert \lambda^{\delta}_{j}-\eta\vert\;
\leq\; C_{j,\,\eps}\,\delta^{1-\eps}.
\end{equation}
\end{theorem}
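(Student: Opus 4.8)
The plan is to transfer the spectral information from the resolvents back to the operators themselves. Proposition \ref{SpectralConvergence} already tells us that, under the conformal (M\"obius) map $z\mapsto (z+i)^{-1}$, the spectra $\mathfrak{S}(\mA^{\delta})$ and $\mathfrak{S}(\mathfrak{A}^{\delta})$ are within $C_{\eps}\delta^{1-\eps}$ of one another (as subsets of $\Cplx$, in Hausdorff distance). The first step is therefore purely a matter of inverting this transformation: if $\eta^{\delta}_j$ is one of the model eigenvalues, then $(\eta^{\delta}_j+i)^{-1}\in\mathfrak{S}((\mathfrak{A}^{\delta}+i\mrm{Id})^{-1})$, hence there exists $\lambda\in\mathfrak{S}(\mA^{\delta})$ with $|(\eta^{\delta}_j+i)^{-1}-(\lambda+i)^{-1}|\le C_{\eps}\delta^{1-\eps}$, and one wants to convert this into a bound on $|\eta^{\delta}_j-\lambda|$ directly. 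Writing $|(\eta^{\delta}_j+i)^{-1}-(\lambda+i)^{-1}| = |\eta^{\delta}_j-\lambda|/(|\eta^{\delta}_j+i|\,|\lambda+i|)$ and using that $|\mu+i|\ge 1$ for every real $\mu$, we get $|\eta^{\delta}_j-\lambda|\le |\eta^{\delta}_j+i|\,|\lambda+i|\,C_{\eps}\delta^{1-\eps}$. The symmetric statement (swapping the roles) is proved the same way.

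The remaining—and only real—difficulty is that the factor $|\eta^{\delta}_j+i|\,|\lambda+i|$ must be bounded by a constant depending only on $j$ and $\eps$, not on $\delta$. For the factor $|\eta^{\delta}_j+i|$ this is immediate: by Proposition \ref{BoundedEigenvalues} (applied to the model operator $\mathfrak{A}^{\delta}=\mathfrak{A}(2\mu\ln(\delta/\delta_{\bullet}))$, as noted after (\ref{defModelOp})), the map $\delta\mapsto\eta^{\delta}_j$ is bounded over $(0;1]$ for each fixed $j$, so $|\eta^{\delta}_j+i|\le M_j$ for some $M_j$ independent of $\delta$. The factor $|\lambda+i|$ is more delicate because $\lambda$ is a priori only known to lie in $\mathfrak{S}(\mA^{\delta})$, an unbounded set. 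However, $\lambda$ is not arbitrary: it is the eigenvalue of $\mA^{\delta}$ closest (in the resolvent metric) to $\eta^{\delta}_j$, so from $|(\lambda+i)^{-1}-(\eta^{\delta}_j+i)^{-1}|\le C_{\eps}\delta^{1-\eps}\le C_{\eps}$ together with $|(\eta^{\delta}_j+i)^{-1}|\le 1$ we get $|(\lambda+i)^{-1}|\le 1+C_{\eps}$, and for $\delta$ small enough (say $C_{\eps}\delta^{1-\eps}\le 1/(2M_j)$) one even gets $|(\lambda+i)^{-1}|\ge |(\eta^{\delta}_j+i)^{-1}|/2\ge 1/(2M_j)$, i.e. $|\lambda+i|\le 2M_j$. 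Thus $|\eta^{\delta}_j-\lambda|\le 2M_j^2 C_{\eps}\delta^{1-\eps}$ for all sufficiently small $\delta$, and for the finitely many (or rather, the remaining compact range of) $\delta$ bounded away from zero one simply enlarges the constant, absorbing everything into a single $C_{j,\eps}$.

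For the second term in (\ref{MainEstimate}) one runs the same argument in the other direction, but here a subtlety appears: we must bound $|\lambda^{\delta}_j+i|$ uniformly in $\delta$ for the ordered eigenvalue $\lambda^{\delta}_j$ of $\mA^{\delta}$, which is precisely the \emph{a posteriori} boundedness claim announced (but not yet proved) in Section \ref{FormalAsymp}. This is obtained by bootstrapping from the first half: the first inequality, already established, shows that for each fixed $k$ there is an eigenvalue of $\mA^{\delta}$ within $O(\delta^{1-\eps})$ of the bounded quantity $\eta^{\delta}_k$; letting $k$ range over $-N,\dots,N$ and invoking that the $\eta^{\delta}_k$ are mutually bounded, one sees that $\mA^{\delta}$ has at least $2N+1$ eigenvalues lying in a fixed bounded set (for $\delta$ small), which, by the ordering convention on $\{\lambda^{\delta}_j\}$, forces $|\lambda^{\delta}_j|\le $ const for $|j|\le N$. (One should be slightly careful that the ordering of $\{\eta^{\delta}_j\}$ and $\{\lambda^{\delta}_j\}$ match up near $0$; a counting argument on how many eigenvalues of each operator lie in $(0;R)$ resp.\ $(-R;0)$, using the resolvent proximity, handles this.) Once $|\lambda^{\delta}_j+i|$ is controlled, the conversion from the resolvent bound to (\ref{MainEstimate}) is identical to the first case. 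The main obstacle is thus entirely this uniform-in-$\delta$ control of the relevant eigenvalues—everything else is the elementary estimate on $z\mapsto(z+i)^{-1}$.
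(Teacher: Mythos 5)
Your overall strategy is the paper's own: boundedness of $\eta_j^{\delta}$ via Proposition \ref{BoundedEigenvalues} (through $\mathfrak{A}^{\delta}=\mathfrak{A}(2\mu\ln(\delta/\delta_{\bullet}))$), the resolvent-spectra proximity of Proposition \ref{SpectralConvergence}, a counting argument to bound $\lambda_j^{\delta}$ uniformly in $\delta$, and the proximity estimate once more; your explicit bookkeeping for the map $z\mapsto (z+i)^{-1}$ is precisely the conversion the paper leaves implicit. The genuine gap is in the counting step. From the fact that each $\eta_k^{\delta}$, $|k|\le N$, has \emph{some} eigenvalue of $\mA^{\delta}$ within $C\delta^{1-\eps}$, you conclude that $\mA^{\delta}$ has at least $2N+1$ eigenvalues in a fixed bounded set. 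This does not follow: the eigenvalues found for different $k$ need not be distinct, and nothing prevents several $\eta_k^{\delta}$ from coinciding or clustering at scale $\delta^{1-\eps}$ (the model eigenvalues are only listed with multiplicity). Proposition \ref{SpectralConvergence} is a Hausdorff-type statement carrying no multiplicity information: an operator with a single simple eigenvalue near $0$ and nothing else below an arbitrarily large threshold is, in resolvent coordinates, spectrally Hausdorff-close to one having an $(N+1)$-fold eigenvalue at $0$, yet its $\lambda_1^{\delta}$ is unbounded. So the inference ``at least $2N+1$ eigenvalues in a bounded set, hence $|\lambda_j^{\delta}|\le \mathrm{const}$ for $|j|\le N$'' fails as written.

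The repair needs the operator-norm statement of Theorem \ref{propoErrorEtsimate}, not merely its spectral corollary: since $(\mA^{\delta}+i\mrm{Id})^{-1}$ and $(\mathfrak{A}^{\delta}+i\mrm{Id})^{-1}$ are normal, compact and $O(\delta^{1-\eps})$-close in norm, one encloses the points $(\eta_k^{\delta}+i)^{-1}$, $|k|\le N$, by a contour kept at a controlled distance from both spectra (possible because the spectra are discrete and Hausdorff-close, and because for normal operators the resolvent norm equals the reciprocal of the distance to the spectrum) and compares the associated Riesz projections; their difference is small in norm for small $\delta$, so they have equal finite rank, which is the multiplicity-respecting count — exactly in the spirit of \cite[Chap.\,IV, Thm.\,3.16]{Kato95} already used in the proof of Proposition \ref{BoundedEigenvalues}, and this is what the paper's ``simple considerations based on the counting of the eigenvalues'' amounts to. A second, much smaller omission: when you ``enlarge the constant'' on the range of $\delta$ bounded away from $0$, you still need $\inf_{\lambda\in\spec(\mA^{\delta})}|\eta^{\delta}_{j}-\lambda|$ (and later $\inf_{\eta\in\spec(\mathfrak{A}^{\delta})}|\lambda^{\delta}_{j}-\eta|$) to be bounded there; this is easy — apply Lemma \ref{EstimEigenVal} with a fixed smooth test function supported away from $\mrm{D}(O,1)$, so that $\mA^{\delta}v=-\sigma_{\pm}\Delta v$ is independent of $\delta$ — but it is an argument, not automatic.
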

\begin{proof}
Pick $j\in\Z$ and $\eps\in(0;1)$. According to Proposition \ref{BoundedEigenvalues}, we know that $\delta\mapsto\eta^{\delta}_{j}$ remains bounded as $\delta$ goes to zero. From (\ref{distanceSpectra}), we infer 
\[
\mathop{\inf}_{\lambda\in\spec(\mA^{\delta})}\vert \eta^{\delta}_{j}-\lambda\vert
\leq\; C_{j,\,\eps}\,\delta^{1-\eps},\qquad\forall \delta\in(0;1].
\]
This estimate, together with simple considerations based on the counting of the eigenvalues of $\mA^{\delta}$ allow to show that for all $j\in\Z$, $\delta\mapsto\lambda^{\delta}_{j}$ is bounded as $\delta\to0$. Using again (\ref{distanceSpectra}), finally we obtain (\ref{MainEstimate}).    
\end{proof}
\noindent Theorem \ref{thmMajor} guarantees that all the eigenvalues of the original operator $\mA^{\delta}$ behave as the eigenvalues of the model operator $\mathfrak{A}^{\delta}$ as $\delta$ goes to zero. Since the eigenvalues of $\mathfrak{A}^{\delta}$ are periodic in $\ln\delta$ - scale, this demonstrates that, asymptotically, all the eigenvalues of $\mA^{\delta}$ are periodic in $\ln\delta$ - scale as $\delta\to0$.

\section{Asymptotic analysis for the source term problem}\label{SectionSourceTermpb}

\noindent The goal of this section is to demonstrate Theorem \ref{propoErrorEtsimate}. Consider the source term problem 
\begin{equation}\label{PbAprioriEstimate}
\begin{array}{|l}
\mbox{Find }u^{\delta}\in\mH^1_{0}(\Om)\mbox{ such that }\\[4pt]
-\div(\sigma^{\delta}\nabla u^{\delta})+iu^{\delta} = f\quad\mbox{ in }\Om,
\end{array}
\end{equation}
where $f$ is a given function of $\mrm{L}^2(\Om)$. For a fixed $\delta\in(0;1]$, Problem (\ref{PbAprioriEstimate}) has a unique solution $u^{\delta}=(\mA^{\delta}+i\mrm{Id})^{-1}f$. Define $v^{\delta}:=(\mathfrak{A}^{\delta}+i\mrm{Id})^{-1}f$. Proving Theorem \ref{propoErrorEtsimate} boils down to show that $v^{\delta}$ is a good approximation of $u^{\delta}$ in the $\mrm{L}^2$-norm as $\delta$ goes to zero. Because the sign of $\sigma^{\delta}$ changes on $\Om$, proving such a result is a delicate task and the variational approach developed in \cite{BoCZ10,BoCC12} to establish Fredholm property for (\ref{PbAprioriEstimate}) seems useless here. Instead, we will employ a method introduced in \cite[Chap.\,2]{MaNP00}, \cite{Naza99} (see also \cite{ChCNSu,ChCN15} for examples of application in the context of negative materials). Our strategy is as follows. First in \S\ref{paragraphFirstAsymptotic}, we construct an almost inverse of $\mA^{\delta}+i\mrm{Id}$ that we denote $\hat{\mrm{R}}^{\delta}$. Then, working with $\hat{\mrm{R}}^{\delta}$, we prove a uniform stability estimate for $(\mA^{\delta}+i\mrm{Id})^{-1}$ as $\delta$ goes to zero. In a third step, we define a second asymptotic expansion of $u^{\delta}$, denoted  $\breve{\mrm{R}}^{\delta}f$, which involves directly the function $v^{\delta}$. We show that it yields a good approximation of $u^{\delta}$ using the stability estimate derived in \S\ref{paragraphStability}. Finally, we establish that $\breve{\mrm{R}}^{\delta}f$ and $v^{\delta}$  are closed to each other when $\delta\to0$.

\subsection{First asymptotic expansion}\label{paragraphFirstAsymptotic}

Let us construct a first asymptotic expansion of $u^{\delta}$, the solution of Problem (\ref{PbAprioriEstimate}). Decompose the source term $f\in\mL^{2}(\Omega)$ in an inner and an outer contribution,
\[
f(\bfx)=g(\bfx)+G(\bfx/\delta)\qquad\mbox{ with }\quad g(\bfx)=\chi_{\sqrt{\delta}}(r)f(\bfx)\quad\mbox{ and }\quad G(\bfxi)=\psi_{\sqrt{\delta}}(\delta\rho)f(\delta\bfxi).
\]
Let us emphasize that according to the definition of $\chi$, $\psi$ (see (\ref{cut-off functions})), 
there holds $\chi_{\sqrt{\delta}}+\psi_{\sqrt{\delta}}=1$. Moreover, since $\chi_{\sqrt{\delta}}=0$ for $r\le\sqrt{\delta}$ and 
$\psi_{\sqrt{\delta}}=0$ for $r\ge 2\sqrt{\delta}$, we know that $g$ vanishes in a neighbourhood of the origin and that $\mbox{supp}(G)$ (the support of $G$) is bounded in $\Xi$. As a consequence, for all $\beta\in\R$ we have $g\in\mV^0_{\beta}(\Om)$ and $G\in\mVc^{0}_{\beta}(\Xi)$. To define a far field expansion for $u^{\delta}$, we use the following result. 
\begin{proposition}\label{defFarField}
Assume that $\kappa_{\sigma}\in(-1;-1/3)$. For all $\beta\in(0;2)$, there is a unique $v$ admitting the decomposition $v=c\,(\sgp + (\delta/\delta_{\bullet})^{2i\mu}\sgm)+\tilde{v}$, with $c\in\Cplx$, $\tilde{v}\in\mathring{\mV}^1_{-\beta}(\Om)$, which satisfies $-\div(\sigma^{0}\nabla v)+i v = g$ in $\Om$. Moreover, we have $c=\pi_{+}(v)$ and 
\begin{equation}\label{eqnUnifContinuityPropo}
|\pi_+(v)|+ \Vert v - \pi_{+}(v)\,(\sgp + (\delta/\delta_{\bullet})^{2i\mu}\sgm)\Vert_{\mV_{-\beta}^{1}(\Omega)}\leq C\,\Vert g\Vert_{\mathring{\mV}_{\beta}^{1}(\Omega)^{*}}
\end{equation}
where $C$ depends on $\beta$ but not on $\delta\in(0;1]$.
\end{proposition}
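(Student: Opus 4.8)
The plan is to realise $v$ as a resolvent of the self-adjoint model operator, to read off the claimed decomposition from the structure of $D(\mathfrak{A}^{\delta})$, and to derive the estimate from Kondratiev's theory; the delicate point will be the uniformity with respect to $\delta$.

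\emph{Existence, uniqueness and decomposition.} Since $|(\delta/\delta_{\bullet})^{2i\mu}|=1$, the operator $\mathfrak{A}^{\delta}$ equals $\mathfrak{A}(2\mu\ln(\delta/\delta_{\bullet}))$ and hence is self-adjoint (Proposition \ref{DescribingSAE}); so $\spec(\mathfrak{A}^{\delta})\subset\R$ and one may set $v:=(\mathfrak{A}^{\delta}+i\mrm{Id})^{-1}g\in D(\mathfrak{A}^{\delta})$, which solves $-\div(\sigma^{0}\nabla v)+iv=g$ by construction. By definition of $D(\mathfrak{A}^{\delta})$ there are $c\in\Cplx$ and $w\in D(\mathfrak{A})\subset\mathring{\mV}^{1}_{-1}(\Omega)$ with $v=c\,(\sgp+(\delta/\delta_{\bullet})^{2i\mu}\sgm)+w$; applying $\pi_{+}$ and using $q(\sgm,\sgp)=q(w,\sgp)=0$ (Proposition \ref{PropositionPrtiesFormeq}) gives $c=\pi_{+}(v)$. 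For $\beta\in(0,1]$ one simply takes $\tilde v:=w$, since $\mathring{\mV}^{1}_{-1}(\Omega)\subset\mathring{\mV}^{1}_{-\beta}(\Omega)$. For $\beta\in(1,2)$ one must improve the weight of the remainder: $v\in\mathring{\mV}^{1}_{\beta}(\Omega)$ and $\mathfrak{L}_{\beta}v=g-iv\in\mathring{\mV}^{1}_{\beta}(\Omega)^{*}$ (here one uses that $g$ is supported away from $O$ and that $v$ is bounded near $O$), so Proposition \ref{WeightExp} yields a representation $v=c_{+}\sgp+c_{-}\sgm+\tilde v_{\beta}$ with $\tilde v_{\beta}\in\mathring{\mV}^{1}_{-\beta}(\Omega)$; comparing it with the previous one and using that no nonzero combination of $\sgp,\sgm$ belongs to $\mathring{\mV}^{1}_{-1}(\Omega)$ (because $|r^{\pm i\mu}|\equiv1$ does not decay at $O$), one is forced to have $c_{+}=c$, $c_{-}=(\delta/\delta_{\bullet})^{2i\mu}c$ and $w=\tilde v_{\beta}\in\mathring{\mV}^{1}_{-\beta}(\Omega)$. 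The same non-decay property yields uniqueness of the pair $(c,\tilde v)$.

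\emph{The uniform estimate.} Put $\tilde v=v-\pi_{+}(v)(\sgp+(\delta/\delta_{\bullet})^{2i\mu}\sgm)$; since $\div(\sigma^{0}\nabla(r^{\pm i\mu}\phi))=0$, the functions $\mathcal{G}_{\pm}:=-\div(\sigma^{0}\nabla\sgpm)$ are fixed and supported in $\{1\le r\le2\}$, and the equation for $v$ rewrites $-\div(\sigma^{0}\nabla\tilde v)+i\tilde v=g-\pi_{+}(v)\bigl(\mathcal{G}_{+}+i\sgp+(\delta/\delta_{\bullet})^{2i\mu}(\mathcal{G}_{-}+i\sgm)\bigr)$, whose right-hand side is $g$ plus a combination of four fixed elements of $\mathring{\mV}^{1}_{\beta}(\Omega)^{*}$ with unit-modulus coefficients. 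I would prove (\ref{eqnUnifContinuityPropo}) by contradiction: if it failed there would be $\delta_{n}\in(0,1]$ and $g_{n}$ with $\|g_{n}\|_{\mathring{\mV}^{1}_{\beta}(\Omega)^{*}}=1$ but $|\pi_{+}(v_{n})|+\|\tilde v_{n}\|_{\mV^{1}_{-\beta}(\Omega)}\to\infty$; after rescaling, $\|g_{n}\|_{\mathring{\mV}^{1}_{\beta}(\Omega)^{*}}\to0$ while $|\pi_{+}(v_{n})|+\|\tilde v_{n}\|_{\mV^{1}_{-\beta}(\Omega)}=1$, whence $\|v_{n}\|_{\mL^{2}(\Omega)}\le C$. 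Using the compact embedding $\mathring{\mV}^{1}_{-\beta}(\Omega)\hookrightarrow\mL^{2}(\Omega)$ and compactness of the unit circle, one extracts $\tilde v_{n}\to\tilde v_{\infty}$ in $\mL^{2}(\Omega)$, $\pi_{+}(v_{n})\to c_{\infty}$ and $(\delta_{n}/\delta_{\bullet})^{2i\mu}\to e^{i\tau_{\infty}}$, hence $v_{n}\to v_{\infty}:=c_{\infty}(\sgp+e^{i\tau_{\infty}}\sgm)+\tilde v_{\infty}$ in $\mL^{2}(\Omega)$; passing to the limit gives $-\div(\sigma^{0}\nabla v_{\infty})+iv_{\infty}=0$ with $v_{\infty}\in D(\mathfrak{A}(\tau_{\infty}))$ (because $-\div(\sigma^{0}\nabla\tilde v_{\infty})\in\mL^{2}(\Omega)$ and $\mathring{\mV}^{1}_{-\beta}(\Omega)\subset\mH^{1}_{0}(\Omega)$, so $\tilde v_{\infty}\in D(\mathfrak{A})$). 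Self-adjointness of $\mathfrak{A}(\tau_{\infty})+i\mrm{Id}$ then forces $v_{\infty}=0$, i.e. $c_{\infty}=0$, $\tilde v_{\infty}=0$; feeding this back into the Kondratiev estimate (\ref{estimPropositionFFF}) for the ($\delta$-independent) operator $\mathfrak{L}_{-\beta}$, together with $\|\tilde v_{n}\|_{\mathring{\mV}^{1}_{\beta}(\Omega)^{*}}\to0$ (compactness of $\mathring{\mV}^{1}_{-\beta}(\Omega)\hookrightarrow\mathring{\mV}^{1}_{\beta}(\Omega)^{*}$), $\|g_{n}\|_{\mathring{\mV}^{1}_{\beta}(\Omega)^{*}}\to0$, $\pi_{+}(v_{n})\to0$ and $\|\tilde v_{n}\|_{\mL^{2}(\Omega\setminus\overline{\mrm{D}(O,1)})}\to0$, one obtains $\|\tilde v_{n}\|_{\mV^{1}_{-\beta}(\Omega)}\to0$, a contradiction.

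\emph{Main obstacle.} The crux is the uniformity in $\delta$ just discussed. Structurally the difficulty is that the singular exponents $\pm i\mu$ sit exactly on the line $\Re e\,\lambda=0$ separating $\mathring{\mV}^{1}_{\beta}(\Omega)$ from $\mathring{\mV}^{1}_{-\beta}(\Omega)$, and the ``corner boundary condition'' $(\delta/\delta_{\bullet})^{2i\mu}$ winds around the unit circle infinitely often as $\delta\to0$, coming arbitrarily close to the value $c_{\zeta}$ for which $\mathfrak{A}^{\delta}$ itself is not injective; hence no uniform bound can hold for $(\mathfrak{A}^{\delta})^{-1}$, and the shift by $i\mrm{Id}$ is indispensable. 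What makes uniformity attainable is that $\|(\mathfrak{A}^{\delta}+i\mrm{Id})^{-1}\|_{\mL^{2}(\Omega)\to\mL^{2}(\Omega)}\le1$ for every $\delta$ by self-adjointness, that the remaining $\delta$-dependence is confined to the compact unit circle, and that $\mathring{\mV}^{1}_{-\beta}(\Omega)\hookrightarrow\mL^{2}(\Omega)$ compactly — precisely what lets the limiting argument close.
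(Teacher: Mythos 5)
Your argument is essentially correct, but it follows a genuinely different route from the paper. The paper works directly in a space with detached asymptotics: it sets $\mV^{\out}_{\beta}(\Omega)=\mrm{span}\{\sgp+(\delta/\delta_{\bullet})^{2i\mu}\sgm\}\oplus\mathring{\mV}^{1}_{-\beta}(\Omega)$, shows that the associated operator $\mathfrak{J}^{\out}_{\beta}:\mV^{\out}_{\beta}(\Omega)\to\mathring{\mV}^{1}_{\beta}(\Omega)^{*}$ is Fredholm of index zero (via \cite[Thm.\,4.4]{BoCC13}), gets injectivity from $0=\Im m\,\langle\mathfrak{J}^{\out}_{\beta}u,u\rangle_{\Omega}=\|u\|^{2}_{\mL^{2}(\Omega)}$ (the gradient term is real precisely because $|(\delta/\delta_{\bullet})^{2i\mu}|=1$), and obtains uniformity in $\delta$ from the periodicity of $\delta\mapsto(\delta/\delta_{\bullet})^{2i\mu}$, i.e.\ compactness of the circle parameter. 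You instead realise $v$ as $(\mathfrak{A}^{\delta}+i\mrm{Id})^{-1}g$, read the decomposition off $D(\mathfrak{A}^{\delta})$ (with the correct weight-improvement via Proposition \ref{WeightExp} for $\beta\in(1;2)$), and prove the estimate by a contradiction--compactness argument where self-adjointness of the limit operator $\mathfrak{A}(\tau_{\infty})$ kills the limit and the Kondratiev estimate (\ref{estimPropositionFFF}) for $\mathfrak{L}_{-\beta}$ upgrades the convergence; the circle compactness plays the same role as the paper's periodicity remark. What the paper's route buys is solvability and the estimate for arbitrary $g\in\mathring{\mV}^{1}_{\beta}(\Omega)^{*}$, whereas your resolvent construction needs $g\in\mL^{2}(\Omega)$ (harmless here, since $g=\chi_{\sqrt{\delta}}f$ vanishes near $O$ and is square integrable, which is also what lets you write $\mathfrak{L}_{\beta}v=g-iv\in\mathring{\mV}^{1}_{\beta}(\Omega)^{*}$); what your route buys is that it avoids introducing $\mathfrak{J}^{\out}_{\beta}$ and leans only on facts already established (self-adjointness of $\mathfrak{A}(\tau)$, Propositions \ref{FredholmnessFarField} and \ref{WeightExp}, compact embeddings), at the price of a non-constructive constant. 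Two small points you should tighten: (i) you only argue uniqueness of the pair $(c,\tilde v)$, not of $v$ itself; this is immediate, since any $v$ with the stated decomposition solving the equation has $\div(\sigma^{0}\nabla\tilde v)=iv-g-c\,\div(\sigma^{0}\nabla(\sgp+(\delta/\delta_{\bullet})^{2i\mu}\sgm))\in\mL^{2}(\Omega)$, hence $\tilde v\in D(\mathfrak{A})$, $v\in D(\mathfrak{A}^{\delta})$, and injectivity of $\mathfrak{A}^{\delta}+i\mrm{Id}$ concludes—but it should be said; (ii) ``$v$ is bounded near $O$'' is not literally what you use for the $D(\mathfrak{A})$ part of $v$; the correct statement is that this part lies in $\mathring{\mV}^{1}_{-1}(\Omega)\subset\mV^{0}_{-2}(\Omega)$, which gives $r^{1-\beta}v\in\mL^{2}(\Omega)$ and hence $iv\in\mathring{\mV}^{1}_{\beta}(\Omega)^{*}$ for $\beta\in(1;2)$.
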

\begin{proof}
For $\beta\in(0;2)$, define the space $\mV^{\out}_{\beta}(\Omega) := \mrm{span}\{\sgp + (\delta/\delta_{\bullet})^{2i\mu}\sgm\}
\oplus\mathring{\mV}^{1}_{-\beta}(\Om)$ endowed with the norm  
\[
\Vert v \Vert_{\mV^{\out}_{\beta}(\Omega)} := \vert c\vert + \Vert \tilde{v}\Vert_{\mV^{1}_{-\beta}(\Omega)}
\qquad\mbox{for }v =c\,(\sgp + (\delta/\delta_{\bullet})^{2i\mu}\sgm)+ \tilde{v},\,c\in\Cplx,\,\tilde{v}\in\mathring{\mV}^{1}_{-\beta}(\Omega).   
\]
Introduce the operator $\mathfrak{J}^{\out}_{\beta}: \mV^{\out}_{\beta}(\Omega)\to \mathring{\mV}_{\beta}^{1}(\Omega)^{*}$ such that $\langle\mathfrak{J}^{\out}_{\beta} u,v\rangle_{\Omega} = (\sigma^0\nabla u,\nabla v)_{\mL^{2}(\Omega)}+i(u,v)_{\mL^{2}(\Omega)}$, for all $u\in \mV^{\out}_{\beta}(\Omega)$, $v\in\mathscr{C}^{\infty}_0(\Omega)$. Using \cite[Thm.\,4.4]{BoCC13}, we can prove that for all $\delta\in(0;1]$, $\beta\in(0;2)$, $\mathfrak{J}^{\out}_{\beta}$ is Fredholm of index zero. If $u$ belongs to $\ker\,\mathfrak{J}^{\out}_{\beta}$, we have $0=\Im m\,\langle\mathfrak{J}^{\out}_{\beta} u,u\rangle_{\Omega}=\|u\|^2_{\mrm{L}^2(\Om)}$. This shows that $\mathfrak{J}^{\out}_{\beta}$ is an isomorphism. Finally, since the map $\delta\mapsto(\delta/\delta_{\bullet})^{2i\mu}$ is periodic in $\ln\delta$-scale, we can demonstrate that the constant $C$ in (\ref{eqnUnifContinuityPropo}) can be chosen independently of $\delta$.
\end{proof}
\noindent Now, we wish to define a near field expansion for $u^{\delta}$. To proceed, we use the following result.
\begin{proposition}\label{PropoFarField}
Assume that $\kappa_{\sigma}\in(-1;-1/3)$. Under Assumption \ref{assumption1}, for all $\beta\in(0;2)$, the operator $\mathfrak{B}_{-\beta}: \mathring{\mVc}_{-\beta}^{1}(\Xi)\to \mathring{\mVc}_{\beta}^{1}(\Xi)^{*}$ defined in Proposition \ref{FredholmnessNearField} is onto and $\ker\,\mathfrak{B}_{-\beta}=\mrm{span}\{Z\}$ where $Z$ is introduced in Proposition  \ref{DiffKernels}. 
\end{proposition}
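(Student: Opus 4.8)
The plan is to read everything off from the Fredholm property of Proposition \ref{FredholmnessNearField}, the location of the singular exponents, and Proposition \ref{DiffKernels}. Two facts make the argument work: (i) by (\ref{SingExp}) every $\lambda\in\Lambda$ has real part in $2\Z$, so the strips $\{0<\Re e\,\lambda<2\}$ and $\{-2<\Re e\,\lambda<0\}$ contain no singular exponent; and (ii) under Assumption \ref{assumption1} the kernel $\Ker\,\mathfrak{B}_{+1}$ is trivial.

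First I would check Fredholmness: for a fixed $\beta\in(0;2)$, no element of $\Lambda$ has real part equal to $\beta$ or to $-\beta$, so Proposition \ref{FredholmnessNearField} gives that $\mathfrak{B}_{\beta}$ and $\mathfrak{B}_{-\beta}$ are both Fredholm, with closed ranges. Then I would show that the kernels do not depend on $\beta$ in $(0;2)$. For $0<\beta_{1}<\beta_{2}<2$, the inclusions $\mathring{\mVc}^{1}_{\beta_{2}}(\Xi)\subset\mathring{\mVc}^{1}_{\beta_{1}}(\Xi)$ and $\mathring{\mVc}^{1}_{-\beta_{1}}(\Xi)\subset\mathring{\mVc}^{1}_{-\beta_{2}}(\Xi)$ give at once $\Ker\,\mathfrak{B}_{\beta_{2}}\subset\Ker\,\mathfrak{B}_{\beta_{1}}$ and $\Ker\,\mathfrak{B}_{-\beta_{1}}\subset\Ker\,\mathfrak{B}_{-\beta_{2}}$. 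For the converse inclusions I would transport the problem to a neighbourhood of $O$ through $\bfx\mapsto\bfx/|\bfx|^{2}$ (as in the proof of Proposition \ref{FredholmnessNearField}) and invoke the asymptotic-expansion part of Kondratiev's theory \cite{Kond67,KoMR97}, \cite[Thm.\,5.2]{BoCC13}: any $u$ satisfying $\mrm{div}(\sigma^{\infty}\nabla u)=0$ together with the Dirichlet condition and lying in $\mathring{\mVc}^{1}_{\beta_{1}}(\Xi)$ (resp.\ $\mathring{\mVc}^{1}_{-\beta_{2}}(\Xi)$) differs from an element of $\mathring{\mVc}^{1}_{\beta_{2}}(\Xi)$ (resp.\ $\mathring{\mVc}^{1}_{-\beta_{1}}(\Xi)$) by a finite linear combination of the singularities associated with the exponents $\lambda\in\Lambda$ satisfying $-\beta_{2}<\Re e\,\lambda<-\beta_{1}$ (resp.\ $\beta_{1}<\Re e\,\lambda<\beta_{2}$); both strips are empty by fact (i), so $u$ itself lies in the smaller space. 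Hence $\Ker\,\mathfrak{B}_{\beta}=\Ker\,\mathfrak{B}_{+1}$ and $\Ker\,\mathfrak{B}_{-\beta}=\Ker\,\mathfrak{B}_{-1}$ for every $\beta\in(0;2)$.

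To conclude: by Assumption \ref{assumption1}, $\Ker\,\mathfrak{B}_{+1}=\{0\}$, whence $\Ker\,\mathfrak{B}_{\beta}=\{0\}$ for all $\beta\in(0;2)$. Since $\langle\mathfrak{B}_{\beta}u,v\rangle_{\Xi}=(\sigma^{\infty}\nabla u,\nabla v)_{\mrm{L}^{2}(\Xi)}=\overline{\langle\mathfrak{B}_{-\beta}v,u\rangle_{\Xi}}$ for $u\in\mathring{\mVc}^{1}_{\beta}(\Xi)$, $v\in\mathring{\mVc}^{1}_{-\beta}(\Xi)$, the operator $\mathfrak{B}_{\beta}$ is, up to complex conjugation, the Banach adjoint of $\mathfrak{B}_{-\beta}$; the closed-range theorem then identifies $\mrm{coker}\,\mathfrak{B}_{-\beta}$ with $\Ker\,\mathfrak{B}_{\beta}=\{0\}$, so $\mathfrak{B}_{-\beta}$ is onto. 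For the kernel, Proposition \ref{DiffKernels} gives $\Ker\,\mathfrak{B}_{-1}=\mrm{span}\{Z\}\oplus\Ker\,\mathfrak{B}_{+1}=\mrm{span}\{Z\}$ under Assumption \ref{assumption1}, and the $\beta$-independence established above then yields $\Ker\,\mathfrak{B}_{-\beta}=\mrm{span}\{Z\}$ for all $\beta\in(0;2)$.

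The delicate point is the converse inclusion in the second paragraph: one must make sure the Kondratiev asymptotic decomposition is genuinely available for this (non strongly elliptic) transmission operator on the unbounded domain $\Xi$, and that the exponents governing the jump between two consecutive weights are exactly those in $\Lambda$, with no extra exponent and no logarithmic enrichment inside the strip $0<|\Re e\,\lambda|<2$ — which is precisely what the standing hypothesis $\kappa_{\sigma}\in(-1;-1/3)$ secures (for $\kappa_{\sigma}=-1/3$ the exponent $0$, with a logarithm, would re-enter that strip, see Remark \ref{PropertiesSingu}). Everything else is routine bookkeeping with Fredholm operators.
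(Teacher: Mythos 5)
Your proof is correct and follows essentially the same route as the paper: Fredholmness of $\mathfrak{B}_{\pm\beta}$ from the absence of singular exponents with $0<|\Re e\,\lambda|<2$, weight-independence of the kernels via Kondratiev asymptotics, surjectivity of $\mathfrak{B}_{-\beta}$ from the adjoint relation with the injective operator $\mathfrak{B}_{\beta}$, and identification of the kernel through Proposition~\ref{DiffKernels} and Assumption~\ref{assumption1}. The only (harmless) difference is that the paper also computes $\ind\,\mathfrak{B}_{-\beta}=-\ind\,\mathfrak{B}_{\beta}=1$ via the index-jump formula of \cite[Chap.\,4, Prop.\,3.1]{NaPl94}, a step you bypass by invoking directly the decomposition $\Ker\,\mathfrak{B}_{-1}=\mrm{span}\{Z\}\oplus\Ker\,\mathfrak{B}_{+1}$.
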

\begin{proof}
When $\kappa_{\sigma}\in(-1;-1/3)$, as seen in \S\ref{StudySing}, we have $
\{\lambda\in\Lambda\,|\,-2<\Re e\,\lambda<2\}=\{\pm i\mu\}$. Therefore, Proposition \ref{FredholmnessNearField} guarantees that for all $\beta\in(0;2)$, $\mathfrak{B}_{-\beta}$ and $\mathfrak{B}_{\beta}$ are Fredholm operators. Since $\mathfrak{B}_{-\beta}$ is the adjoint of $\mathfrak{B}_{\beta}$, we have $\ind\,\mathfrak{B}_{-\beta}=-\ind\,\mathfrak{B}_{\beta}$ (here $\ind$ stands for the index). On the other hand, the equality 
$\{\lambda\in\Lambda\,|\,-2<\Re e\,\lambda<2\}=\{\pm i\mu\}$, together with \cite[Chap.\,4, Prop.\,3.1]{NaPl94}, implies that $\ind\,\mathfrak{B}_{-\beta}-\ind\,\mathfrak{B}_{\beta}=2$. From the two previous relations, we infer $\ind\,\mathfrak{B}_{-\beta}=-\ind\,\mathfrak{B}_{\beta}=1$. Applying the Kondratiev theory, since $\{\lambda\in\Lambda\,|\,0<|\Re e\,\lambda|<2\}=\emptyset$, we can prove that for all $\beta\in(0;2)$, $\ker\,\mathfrak{B}_{-\beta}=\ker\,\mathfrak{B}_{-1}$ and $\ker\,\mathfrak{B}_{\beta}=\ker\,\mathfrak{B}_{+1}$. Under Assumption \ref{assumption1}, we deduce that $\ker\,\mathfrak{B}_{-\beta}=\mrm{span}\{Z\}$ and that $\mathfrak{B}_{-\beta}$ is onto (because $\mathfrak{B}_{-\beta}$ is the adjoint of $\mathfrak{B}_{\beta}$ which is injective).
\end{proof}
\noindent Proposition \ref{PropoFarField} ensures that for all $\beta\in(0;2)$, there is a unique function $U\in\mathring{\mathcal{V}}^1_{-\beta}(\Xi)$ satisfying $-\div(\sigma^{\infty}\nabla U)=G$ in $\Xi$ and the orthogonality condition $\int_{\Xi\cap \mrm{D}(0,R)}U(\bfxi)\,Z(\bfxi)\,d\bfxi=0$ for some given $R>0$. Moreover, we have the continuity estimate
\begin{equation}\label{defNearField}
\|U\|_{\mathcal{V}^1_{-\beta}(\Xi)} \le C\,\|G\|_{\mathring{\mathcal{V}}^1_{\beta}(\Xi)^{\ast}}.
\end{equation}
Then we set $\hat{V}_{\delta}(\bfx)=\delta^{2}U_{\delta}(\bfx)+\delta^{i\mu}\pi_{+}(v)Z_{\delta}(\bfx)$ with $U_{\delta}$, $Z_{\delta}$ such that $U_{\delta}(\bfx)=U(\bfx/\delta)$, $Z_{\delta}(\bfx)=Z(\bfx/\delta)$. In the definition of $\hat{V}_{\delta}$, the multiplicative term in front of $Z_{\delta}$ is added so that the behaviour of $\hat{V}_{\delta}$ matches with the one of $v$ when $r\to0$, $r/\delta\to+\infty$ (exactly as in \S\ref{paragraphMatching}). Finally, we define the linear map $\hat{\mrm{R}}^{\delta}:\mrm{L}^2(\Om)\to\mrm{L}^2(\Om)$ such that $\hat{\mrm{R}}^{\delta}f=\hat{u}^{\delta}$ ($f$ is the source term appearing in (\ref{PbAprioriEstimate})) with
\begin{equation}\label{constructionUhat}
\hat{u}^{\delta} = \chi_{\delta}\,v + \psi\,\hat{V}_{\delta} - \psi\chi_{\delta}\,\hat{m}_{\delta}. 
\end{equation} 
In (\ref{constructionUhat}), we take $\hat{m}_{\delta}$ such that $\hat{m}_{\delta}(\bfx)=\pi_{+}(v)\big( r^{+i\mu}+ 
(\delta/\delta_{\bullet})^{2i\mu}r^{-i\mu}\big)\phi(\theta)$. This function represents the predominant behaviour of $v$ 
(resp. $\hat{V}_{\delta}$) as $r\to0$ (resp. $r/\delta\to+\infty$).

\subsection{Stability estimate}\label{paragraphStability}
Now, with the operator $\hat{\mrm{R}}^{\delta}$, we prove a uniform stability estimate for $(\mA^{\delta}+i\mrm{Id})^{-1}$ when $\delta$ goes to zero. To proceed, as mentioned above, we will work with specific norms using a method introduced in \cite[Chap.\,2]{MaNP00}, \cite{Naza99}. To implement the technique, 
we need to introduce the Hilbert spaces $\mV^0_{\beta,\, \delta}(\Om)$, $\beta\in\R$.
These spaces are defined as the completions of $\mathscr{C}^{\infty}(\overline{\Om})$ for the weighted norms
\begin{equation}\label{weighted space bounded}
 \|v\|_{\mV^0_{\beta,\delta}(\Om)}  := 
\|(r+\delta)^{\beta}v\|_{\mrm{L}^2(\Om)}.
\end{equation}
Observe that for any $\beta\in\R$ and $\delta>0$, the space $\mV^0_{\beta,\, \delta}(\Om)$ coincides with  $\mrm{L}^2(\Om)$ because the norm (\ref{weighted space bounded}) is equivalent to $\|\cdot\|_{\mrm{L}^2(\Om)}$. However, the constants coming into play in this equivalence depend on $\delta$ which is a crucial feature.

\begin{proposition}\label{proposition stability estimate}
Under Assumption \ref{assumption1}, for any $\beta\in(0;1)$, there is $C_{\beta}>0$ independent of $\delta$ such that 
\begin{equation}\label{lemma stability estimate result}
\sup_{f\in\mrm{L}^2(\Om)\setminus\{0\}}\frac{
\|(\mA^{\delta}+i\mrm{Id})^{-1}f\|_{\mL^{2}(\Omega)}}{
\|f\|_{\mV^0_{1-\beta,\delta}(\Om)}}\le C_{\beta},\qquad\forall \delta\in(0;1].
\end{equation}
\end{proposition}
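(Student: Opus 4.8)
The plan is to use the almost inverse $\hat{\mrm{R}}^{\delta}$ built in \S\ref{paragraphFirstAsymptotic} as a parametrix for $\mA^{\delta}+i\mrm{Id}$ and to estimate the resulting residual in the weighted norm $\|\cdot\|_{\mV^0_{1-\beta,\delta}(\Om)}$. Fix $\beta\in(0;1)$, $f\in\mL^{2}(\Om)$, and set $u^{\delta}:=(\mA^{\delta}+i\mrm{Id})^{-1}f$, $\hat u^{\delta}:=\hat{\mrm{R}}^{\delta}f$ (given by (\ref{constructionUhat})), and $\mathcal{R}^{\delta}f:=(\mA^{\delta}+i\mrm{Id})\hat u^{\delta}-f$. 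Since $\mA^{\delta}$ is self-adjoint, $\|(\mA^{\delta}+i\mrm{Id})^{-1}\|_{\mL^{2}(\Om)\to\mL^{2}(\Om)}\le1$, so that $u^{\delta}-\hat u^{\delta}=-(\mA^{\delta}+i\mrm{Id})^{-1}\mathcal{R}^{\delta}f$ and hence $\|u^{\delta}\|_{\mL^{2}(\Om)}\le\|\hat u^{\delta}\|_{\mL^{2}(\Om)}+\|\mathcal{R}^{\delta}f\|_{\mL^{2}(\Om)}$. Everything then reduces to the two $\delta$-uniform bounds $\|\hat u^{\delta}\|_{\mL^{2}(\Om)}\le C\|f\|_{\mV^0_{1-\beta,\delta}(\Om)}$ and $\|\mathcal{R}^{\delta}f\|_{\mL^{2}(\Om)}\le C\|f\|_{\mV^0_{1-\beta,\delta}(\Om)}$; note that the residual need not be \emph{small}, only controlled by the weighted norm of $f$.

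For the first bound I would estimate the four terms of (\ref{constructionUhat}) separately. With the source splitting $f=g+G(\cdot/\delta)$ of \S\ref{paragraphFirstAsymptotic}, the inequality $|g|\le|f|$ and $1-\beta>0$ give $\|g\|_{\mathring{\mV}^1_{\beta}(\Om)^{*}}\le\|r^{1-\beta}g\|_{\mL^{2}(\Om)}\le\|f\|_{\mV^0_{1-\beta,\delta}(\Om)}$, while the change of variables $\bfxi=\bfx/\delta$ gives $\|G\|_{\mathring{\mVc}^1_{\beta}(\Xi)^{*}}\le C\delta^{-(2-\beta)}\|f\|_{\mV^0_{1-\beta,\delta}(\Om)}$. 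Plugging these into Proposition \ref{defFarField} (which controls $|\pi_{+}(v)|$ and $\|\tilde v\|_{\mV^1_{-\beta}(\Om)}$, with $\tilde v:=v-\pi_{+}(v)(\sgp+(\delta/\delta_{\bullet})^{2i\mu}\sgm)$) and into (\ref{defNearField}) (which controls $\|U\|_{\mVc^1_{-\beta}(\Xi)}$), then rescaling the near-field functions back to $\Om$, one checks term by term that each piece of $\hat u^{\delta}$ is $O(\|f\|_{\mV^0_{1-\beta,\delta}(\Om)})$: the powers of $\delta$ cancel exactly (e.g.\ the prefactor $\delta^{2-\beta}$ of $\delta^{2}U(\cdot/\delta)$ against the $\delta^{-(2-\beta)}$ in $\|G\|_{\mathring{\mVc}^1_{\beta}(\Xi)^{*}}$), using also $\|\sgpm\|_{\mL^{2}(\Om)}<\infty$, $\|\tilde v\|_{\mL^{2}(\Om)}\le C\|\tilde v\|_{\mV^1_{-\beta}(\Om)}$ and the local square-integrability of $\tilde Z$ from Proposition \ref{DiffKernels}.

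The residual is where the matched construction pays off. By design $\hat m_{\delta}$ cancels the principal parts, and since $\chi_{\delta}\equiv1$ on $\supp\chi_{\sqrt\delta}$ one has $\chi_{\delta}g+G(\cdot/\delta)=f$; a direct computation then reduces $\mathcal{R}^{\delta}f$ to three kinds of terms: the commutator of $\chi_{\delta}$ with $\mA^{\delta}$ acting on $\tilde v$, supported in $\{\delta\le r\le2\delta\}$; the commutator of $\psi$ with $\mA^{\delta}$ acting on $\delta^{2}U(\cdot/\delta)+\delta^{i\mu}\pi_{+}(v)\tilde Z(\cdot/\delta)$, supported in $\{1\le r\le2\}$; and the lower-order term $i\psi\bigl(\delta^{2}U(\cdot/\delta)+\delta^{i\mu}\pi_{+}(v)\tilde Z(\cdot/\delta)\bigr)$, which is $O(\|f\|_{\mV^0_{1-\beta,\delta}(\Om)})$ by the first step. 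For the two commutator terms the naive $\mL^{2}$-bound diverges as $\delta\to0$, and the crucial observation is that, since $\Lambda$ has no exponent with $0<|\Re e\,\lambda|<2$ for $\kappa_{\sigma}\in(-1;-1/3)$, the remainders $\tilde v$, $U$, $\tilde Z$ belong to $\mathring{\mV}^1_{-\beta'}(\Om)$, $\mathring{\mVc}^1_{-\beta'}(\Xi)$, $\mathring{\mVc}^1_{\beta'}(\Xi)$ for \emph{every} $\beta'\in(0;2)$, with the corresponding ($\delta$-uniform) estimates of Proposition \ref{defFarField}, (\ref{defNearField}) and Proposition \ref{DiffKernels}. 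Choosing $\beta'$ close to $2$ for the $\chi_{\delta}$-commutator and using that $g$ is supported in $\{r\ge\sqrt\delta\}$ — whence $\|g\|_{\mathring{\mV}^1_{\beta'}(\Om)^{*}}\le C\delta^{(\beta-\beta')/2}\|f\|_{\mV^0_{1-\beta,\delta}(\Om)}$ — the factor $\delta^{\beta'-1}$ produced by the scale-$\delta$ localization combines with $\delta^{(\beta-\beta')/2}$ into $\delta^{(\beta+\beta')/2-1}$, whose exponent is positive as soon as $\beta'>2-\beta$. Symmetrically, choosing $\beta'$ close to $0$ for the $U$-part of the $\psi$-commutator (localized at $\rho=r/\delta\sim\delta^{-1}$) yields a power $\delta^{(\beta-\beta')/2}$, again positive since $\beta'<\beta$, while the $\tilde Z$-part is even smaller because $\tilde Z$ decays at infinity. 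Summing all contributions gives $\|\mathcal{R}^{\delta}f\|_{\mL^{2}(\Om)}\le C\|f\|_{\mV^0_{1-\beta,\delta}(\Om)}$, which finishes the proof; the constants are $\delta$-independent because the objects of Propositions \ref{defFarField}, \ref{PropoFarField}, \ref{DiffKernels} do not depend on $\delta$, the only $\delta$-dependence being the factor $(\delta/\delta_{\bullet})^{2i\mu}$, which is periodic in $\ln\delta$. The main obstacle is precisely this bookkeeping: reconciling the per-term choice of the weight $\beta'$ (near $2$ for corner-scale commutators, near $0$ for far-scale ones) with the "size" estimates that want $\beta'=\beta$, and checking that the powers of $\delta$ coming from the rescalings, from the intermediate cut-off scale $\sqrt\delta$ and from the localization scales of the cut-offs add up to a non-negative exponent.
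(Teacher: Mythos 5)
Your argument is correct, and it reuses the paper's core machinery --- the matched parametrix $\hat{\mrm{R}}^{\delta}$ of (\ref{constructionUhat}), the commutator identity $(\mA^{\delta}+i\mrm{Id})\hat{\mrm{R}}^{\delta}f=f+\hat{\mrm{K}}^{\delta}f$ with $\hat{\mrm{K}}^{\delta}$ as in (\ref{defExplicitOpK}), and the $\delta$-uniform Kondratiev estimates of Proposition \ref{defFarField}, (\ref{defNearField}) and Proposition \ref{DiffKernels} combined with the support properties of $g$ and $G$ --- but it closes the argument by a genuinely different route. The paper estimates the residual in the $\delta$-adapted norm and shows it is \emph{small}, $\|\hat{\mrm{K}}^{\delta}f\|_{\mV^0_{1-\beta,\delta}(\Om)}\le C\,\delta^{\gamma/2}\|f\|_{\mV^0_{1-\beta,\delta}(\Om)}$ with one auxiliary exponent $\gamma\in(0;\beta)$, the weight $(r+\delta)^{1-\beta}$ of the target norm absorbing part of the factors $\delta^{-1}$, $\delta^{-2}$ of the scale-$\delta$ cut-off; it then inverts $\mrm{Id}+\hat{\mrm{K}}^{\delta}$ by a Neumann series and writes $(\mA^{\delta}+i\mrm{Id})^{-1}=\hat{\mrm{R}}^{\delta}(\mrm{Id}+\hat{\mrm{K}}^{\delta})^{-1}$. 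You instead invoke $\|(\mA^{\delta}+i\mrm{Id})^{-1}\|_{\mL^{2}\to\mL^{2}}\le1$, which is legitimate because $\mA^{\delta}$ is self-adjoint with real spectrum, so you only need the residual to be \emph{bounded} in plain $\mL^{2}$ by $\|f\|_{\mV^0_{1-\beta,\delta}(\Om)}$; you achieve this by choosing the auxiliary Kondratiev weight per term ($\beta'>2-\beta$ for the $\chi_{\delta}$-commutator acting on $\tilde v$, $\beta'<\beta$ for the $U$-part of the $\psi$-commutator), and your exponent bookkeeping $(\beta+\beta')/2-1>0$, respectively $(\beta-\beta')/2>0$, does check out; the per-term change of weight is permissible because the strip $0<\vert\Re e\,\lambda\vert<2$ contains no exponent of $\Lambda$, so the same $\tilde v$, $U$, $\tilde Z$ satisfy the estimates for every weight in $(0;2)$ with $\delta$-independent constants (uniqueness in Proposition \ref{defFarField} and in the definition of $U$ guarantees the correctors do not depend on the weight --- worth stating explicitly). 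What each approach buys: yours avoids both the smallness estimate and the Neumann series, so it is slightly more elementary, but it uses self-adjointness in an essential way; the paper's scheme does not rely on self-adjointness at this stage and its single-exponent bookkeeping in the weighted target norm is lighter, the smallness of $\hat{\mrm{K}}^{\delta}$ coming for free. One shared housekeeping point you should make explicit: the cut-off identities ($\chi_{\delta}\chi_{\sqrt{\delta}}=\chi_{\sqrt{\delta}}$, $\psi\,\psi_{\sqrt{\delta}}=\psi_{\sqrt{\delta}}$, and $v-\hat m_{\delta}=\tilde v$ on the support of $\nabla\chi_{\delta}$) require $\delta$ small (say $\delta\le1/4$), the remaining range $\delta\in\lbr1/4;1\rbr$ being trivial since there $\|f\|_{\mL^{2}(\Om)}\le C\|f\|_{\mV^0_{1-\beta,\delta}(\Om)}$ and $\|(\mA^{\delta}+i\mrm{Id})^{-1}\|_{\mL^{2}\to\mL^{2}}\le1$.
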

\begin{proof}
As above, we denote $\hat{u}^{\delta}=\hat{\mrm{R}}^{\delta}f$. The general scheme, whose steps will be justified hereafter, is the following. We calculate 
\begin{equation}\label{ToProve1}
(\mA^{\delta}+i\mrm{Id})(\hat{\mrm{R}}^{\delta}f)= -\div(\sigma^{\delta}\nabla \hat{u}^{\delta})+i\hat{u}^{\delta} = f+\hat{\mrm{K}}^{\delta}f
\end{equation}
where $\hat{\mrm{K}}^{\delta}:\mrm{L}^2(\Om)\to\mrm{L}^2(\Om)$ is a linear operator defined in (\ref{defExplicitOpK}). We prove that there holds 
\begin{equation}\label{ToProve2}
\|\hat{\mrm{K}}^{\delta}f\|_{\mV^0_{1-\beta,\delta}(\Om)}\le C\,\delta^{\gamma/2}\,\|f\|_{\mV^0_{1-\beta,\delta}(\Om)},
\end{equation}
where $\beta$ is chosen in $(0;1)$ and where $\gamma>0$ is chosen so that $\gamma<\beta$. Here and in the following, $C>0$ denotes a constant, 
which can change from one line to another and depends on $\beta$, but which does not depend on $\delta$. We infer that for $\delta$ small enough, 
$\mrm{Id}+\hat{\mrm{K}}^{\delta}$ is invertible as an operator of $\mrm{L}^2(\Om)$ endowed with the norm $\|\cdot\|_{\mV^0_{1-\beta,\delta}(\Om)}$. On 
the other hand, a direct computation yields
\begin{equation}\label{ToProve3}
\|\hat{\mrm{R}}^{\delta}f\|_{\mrm{L}^2(\Om)}=\|\hat{u}^{\delta}\|_{\mrm{L}^2(\Om)} \le C\,\|f\|_{\mV^0_{1-\beta,\delta}(\Om)}.
\end{equation}
Using (\ref{ToProve1}), (\ref{ToProve3}), then we can write
\begin{equation}\label{estimStab}
\Vert (\mA^{\delta}+i\mrm{Id})^{-1}f\Vert_{\mL^{2}(\Omega)} = \|\hat{\mrm{R}}^{\delta}(\mrm{Id}+\hat{\mrm{K}}^{\delta})^{-1}f \|_{\mrm{L}^2(\Om)}
\le C\,\|(\mrm{Id}+\hat{\mrm{K}}^{\delta})^{-1}f\|_{\mV^0_{1-\beta,\delta}(\Om)}\le C\,\|f\|_{\mV^0_{1-\beta,\delta}(\Om)}.
\end{equation}
Taking the supremum over all $f\in\mL^{2}(\Omega)$, we obtain $(\ref{lemma stability estimate result})$. To end the proof, it remains to establish (\ref{ToProve1})--(\ref{ToProve3}). We first write estimates which will be useful in the analysis. Pick some $\beta\in(0;1)$, $\gamma\in(0;\beta)$ and apply (\ref{eqnUnifContinuityPropo}) with $\beta$ replaced by $\beta+\gamma$. We get
\begin{equation}\label{eqnUnifContinuity}
|\pi_+(v)|+ \Vert v - \pi_{+}(v)\,(\sgp + (\delta/\delta_{\bullet})^{2i\mu}\sgm)\Vert_{\mV_{-\beta-\gamma}^{1}(\Omega)}\leq C\,\Vert g\Vert_{\mathring{\mV}_{\beta+\gamma}^{1}(\Omega)^{*}}.
\end{equation}
Again, we emphasize that $C$ depends on $\beta$, $\gamma$ but not on $\delta$. Moreover, we have
\begin{equation}\label{estimation new source term}
\begin{array}{lcl}
\Vert g\Vert_{\mathring{\mV}_{\beta+\gamma}^{1}(\Omega)^{*}} \le \Vert r^{-\beta-\gamma+1}g\Vert_{\mrm{L}^2(\Om)} & \le & \Vert r^{-\beta-\gamma+1}g\Vert_{\mrm{L}^2(\Om\setminus\overline{\mrm{D}(O,\sqrt{\delta})})} \\[6pt]
 & \le & C\delta^{-\gamma/2}\,\Vert r^{-\beta+1}g\Vert_{\mrm{L}^2(\Om\setminus\overline{\mrm{D}(O,\sqrt{\delta})})} \\[6pt]
 & \le & C\delta^{-\gamma/2}\,\Vert (r+\delta)^{-\beta+1}g\Vert_{\mrm{L}^2(\Om\setminus\overline{\mrm{D}(O,\sqrt{\delta})})} \\[6pt]
 & \le & C\delta^{-\gamma/2}\,\|f\|_{\mV^0_{1-\beta,\delta}(\Om)}.
\end{array}
\end{equation}
Note that taking $\gamma=0$ in (\ref{eqnUnifContinuity}), (\ref{estimation new source term}) gives 
\begin{equation}\label{estimMoinsUn}
|\pi_+(v)|+ \Vert v - \pi_{+}(v)\,(\sgp + (\delta/\delta_{\bullet})^{2i\mu}\sgm)\Vert_{\mV_{-\beta}^{1}(\Omega)} \leq  C\,\Vert g\Vert_{\mathring{\mV}_{\beta}^{1}(\Omega)^{*}} \le  C\,\|f\|_{\mV^0_{1-\beta,\delta}}.
\end{equation}
For the near field term $U$, apply (\ref{defNearField}) with $-\beta$ replaced by $-\beta+\gamma$. This yields 
\begin{equation}\label{eqnUnifContinuityNear}
\|U\|_{\mathcal{V}^1_{-\beta+\gamma}(\Xi)} \le C\,\|G\|_{\mathring{\mathcal{V}}^1_{\beta-\gamma}(\Xi)^{\ast}}.
\end{equation}
Observe that  
\begin{equation}\label{estimation new source termNF}
\begin{array}{lcl}
 \|G\|_{\mathring{\mathcal{V}}^1_{\beta-\gamma}(\Xi)^{\ast}} &\le &\|(1+\rho)^{-\beta+\gamma+1}G\|_{\mrm{L}^2(\Xi)} \\[6pt]
 & \le &\|(1+\rho)^{-\beta+\gamma+1}G\|_{\mrm{L}^2(\Xi\setminus\overline{\mrm{D}(O,2/\sqrt{\delta})}}  \\[6pt]
  & \le & C\,\delta^{-\gamma/2}\|(1+\rho)^{-\beta+1}G\|_{\mrm{L}^2(\Xi\setminus\overline{\mrm{D}(O,2/\sqrt{\delta})}}\  \le\   C\,\delta^{\beta-2-\gamma/2}\,\|f\|_{\mV^0_{1-\beta,\delta}(\Om)}.
\end{array}
\end{equation}
The last inequality in (\ref{estimation new source termNF}) has been obtained making the change of 
variables $\bfx=\delta\bfxi$. This explains the appearance of the term $\delta^{\beta-2}$. Now, we show (\ref{ToProve1})--(\ref{ToProve3}).\\
\newline
$\star$ Proof of (\ref{ToProve1}). A direct computation provides 

\begin{equation}\label{commutator hat}
\begin{array}{lcl}
  -\div(\sigma^{\delta}\nabla \hat{u}^{\delta})+i\hat{u}^{\delta} 
 &= &  -\chi_{\delta}\,\div(\sigma^{\delta}\nabla v)-\psi\,\div(\sigma^{\delta}\nabla \hat{V}_{\delta})+\psi\chi_{\delta}\,\div(\sigma^{\delta}\nabla \hat{m}_{\delta})\\[4pt]
 & & -\big[\div(\sigma^{\delta}\nabla\cdot),\chi_{\delta}\big]v-
\big[\div(\sigma^{\delta}\nabla\cdot),\psi\big]\hat{V}_{\delta}+
\big[\div(\sigma^{\delta}\nabla\cdot),\psi\chi_{\delta}\big]\hat{m}_{\delta}\\[4pt]
& & +i\chi_{\delta}\,v +i\psi\,\hat{V}_{\delta} - i\psi\chi_{\delta}\,\hat{m}_{\delta}\\[4pt]
&= & \chi_{\delta}\,\chi_{\sqrt{\delta}}\,f+\psi\,\psi_{\sqrt{\delta}}\,f\\[4pt]
&  & -\big[\div(\sigma^{\delta}\nabla\cdot),\chi_{\delta}\big](v-\hat{m}_{\delta})-
\big[\div(\sigma^{\delta}\nabla\cdot),\psi\big](\hat{V}_{\delta}-\hat{m}_{\delta})\\[4pt]
& & +i\psi\,(\hat{V}_{\delta} - i\chi_{\delta}\,\hat{m}_{\delta})\\[4pt]
& = & f -\big[\div(\sigma^{\delta}\nabla\cdot),\chi_{\delta}\big](v-\hat{m}_{\delta})-\big[\div(\sigma^{\delta}\nabla\cdot),\psi\big](\hat{V}_{\delta}-\hat{m}_{\delta})\\[4pt]
& & +i\psi\,(\hat{V}_{\delta} - i\chi_{\delta}\,\hat{m}_{\delta}).
\end{array}
\end{equation}
In the above equalities, the commutator $[A,B]$ is defined by $[A,B]=AB-BA$. In particular, observing that $\nabla(\psi\chi_{\delta})=\nabla\psi+\nabla\chi_{\delta}$, we find 
\begin{equation}\label{explicitCommutator}
\begin{array}{lcl}
\big[\div(\sigma^{\delta}\nabla\cdot),\psi\chi_{\delta}\big]\hat{m}_{\delta} \hspace{-0.1cm}&\hspace{-0.1cm}=&\hspace{-0.1cm} \div(\sigma^{\delta}\nabla(\psi\chi_{\delta}\hat{m}_{\delta}))-\psi\chi_{\delta}\,\div(\sigma^{\delta}\nabla \hat{m}_{\delta})\\[4pt]
\hspace{-0.1cm}&\hspace{-0.1cm}=&\hspace{-0.1cm} 2\sigma^{\delta}\nabla(\psi\chi_{\delta})\cdot\nabla \hat{m}_{\delta}+\hat{m}_{\delta}\,\div(\sigma^{\delta}\nabla(\psi\chi_{\delta}))\\[4pt]
\hspace{-0.1cm}&\hspace{-0.1cm}=&\hspace{-0.1cm} 2\sigma^{\delta}\nabla\chi_{\delta}\cdot\nabla \hat{m}_{\delta}+\hat{m}_{\delta}\,\div(\sigma^{\delta}\nabla\chi_{\delta}) +2\sigma^{\delta}\nabla\psi\cdot\nabla \hat{m}_{\delta}+\hat{m}_{\delta}\,\div(\sigma^{\delta}\nabla\psi)\\[4pt]
\hspace{-0.1cm}&\hspace{-0.1cm}=&\hspace{-0.1cm}\big[\div(\sigma^{\delta}\nabla\cdot),\chi_{\delta}\big]\hat{m}_{\delta}+\big[\div(\sigma^{\delta}\nabla\cdot),\psi\big]\hat{m}_{\delta}.
\end{array}
\end{equation}
In (\ref{commutator hat}), we also use that $\chi_{\delta}\,\chi_{\sqrt{\delta}}=\chi_{\sqrt{\delta}}$, $\psi_{\delta}\,\psi_{\sqrt{\delta}}=\psi_{\sqrt{\delta}}$ and $\chi_{\sqrt{\delta}}+\psi_{\sqrt{\delta}}=1$. From (\ref{commutator hat}), we infer that the operator $\hat{\mrm{K}}$ introduced in (\ref{ToProve1}) is defined by 
\begin{equation}\label{defExplicitOpK}
\hat{\mrm{K}}f=-\big[\div(\sigma^{\delta}\nabla\cdot),\chi_{\delta}\big](v-\hat{m}_{\delta})-\big[\div(\sigma^{\delta}\nabla\cdot),\psi\big](\hat{V}_{\delta}-\hat{m}_{\delta}) +i\psi\,(\hat{V}_{\delta} - \chi_{\delta}\,\hat{m}_{\delta}).
\end{equation}
$\star$ Proof of (\ref{ToProve2}). To compute the norm of $\hat{\mrm{K}}$, we will assess each of the terms of the right hand side of (\ref{defExplicitOpK}). For the first one, working as in (\ref{explicitCommutator}), we find
\[
\big[\div(\sigma^{\delta}\nabla\cdot),\chi_{\delta}\big](v-\hat{m}_{\delta}) = 2\sigma^{\delta}\nabla\chi_{\delta}\cdot\nabla(v-\hat{m}_{\delta})+(v-\hat{m}_{\delta})\,\div(\sigma^{\delta}\nabla\chi_{\delta})
\]
Define, for $t>0$, $\mathbb{Q}^{t}:=\{\bfx\in\Xi\,|\,t<|\bfx|<2t\}$. Noticing that $|\nabla\chi_{\delta}|\le C\,\delta^{-1}$ and $|\div(\sigma^{\delta}\nabla\chi_{\delta})|\le C\,\delta^{-2}$, we can write 
\begin{equation}\label{estimateReminder1}
\begin{array}{ll}
& \|\big[\div(\sigma^{\delta}\nabla\cdot),\chi_{\delta}\big](v-\hat{m}_{\delta})\|_{\mV^0_{1-\beta,\delta}(\Om)}\\[4pt] 
 \le &
\|(r+\delta)^{1-\beta}\sigma^{\delta}\nabla\chi_{\delta}\cdot\nabla(v-\hat{m}_{\delta})\|_{\mrm{L}^2(\Om)}+\|(r+\delta)^{1-\beta}(v-\hat{m}_{\delta})\,\div(\sigma^{\delta}\nabla\chi_{\delta})\|_{\mrm{L}^2(\Om)}\\[4pt]
 \le & C\,(\delta^{-1}\|(r+\delta)^{1-\beta}\nabla(v-\hat{m}_{\delta})\|_{\mrm{L}^2(\mathbb{Q}^{\delta})}+\delta^{-2}\|(r+\delta)^{1-\beta}(v-\hat{m}_{\delta})\|_{\mrm{L}^2(\mathbb{Q}^{\delta})})\\[4pt]
 \le & C\,\delta^{\gamma}\,(\|r^{-\beta-\gamma}\nabla(v-\hat{m}_{\delta})\|_{\mrm{L}^2(\mathbb{Q}^{\delta})}+\|r^{-\beta-\gamma-1}(v-\hat{m}_{\delta})\|_{\mrm{L}^2(\mathbb{Q}^{\delta})})\\[4pt]
 \le & C\,\delta^{\gamma}\,\|v-\hat{m}_{\delta}\|_{\mV^1_{-\beta-\gamma}(\Om)} \ \le\ C\,\delta^{\gamma/2}\,\|f\|_{\mV^0_{1-\beta,\delta}(\Om)}.
\end{array}
\end{equation}
In (\ref{estimateReminder1}), we use that we have $|\bfx|\le |\bfx|+\delta \le 2|\bfx|$ in $\mathbb{Q}^{\delta}$. Moreover, the last inequality comes from (\ref{eqnUnifContinuity}), (\ref{estimation new source term}). Proceeding similarly, we find 
$\|\big[\div(\sigma^{\delta}\nabla\cdot),\psi\big](\hat{V}_{\delta}-\hat{m}_{\delta})\|_{\mV^0_{1-\beta,\delta}(\Om)} \le C\,\delta^{\gamma/2}\,\|f\|_{\mV^0_{1-\beta,\delta}(\Om)}$. Now, we bound the third term of the right hand side of (\ref{defExplicitOpK}). Triangular inequality implies 
\begin{equation}\label{tiroir1}
 \|(r+\delta)^{1-\beta}\psi\,(\hat{V}_{\delta} - \chi_{\delta}\,\hat{m}_{\delta})\|_{\mrm{L}^2(\Om)} \le  \|(r+\delta)^{1-\beta}\psi\,(\hat{V}_{\delta} - \hat{m}_{\delta})\|_{\mrm{L}^2(\Om)}+\|(r+\delta)^{1-\beta}\psi_{\delta}\,\hat{m}_{\delta}\|_{\mrm{L}^2(\Om)}.
\end{equation}
On the one hand, using (\ref{estimMoinsUn}), (\ref{eqnUnifContinuityNear}) and (\ref{estimation new source termNF}), we find 
\begin{equation}\label{tiroir2}
\begin{array}{ll}
&\|(r+\delta)^{1-\beta}\psi\,(\hat{V}_{\delta} - \hat{m}_{\delta})\|_{\mrm{L}^2(\Om)} \\[4pt]
\le & \|(r+\delta)^{1-\beta}\psi\,\delta^2U_{\delta}\|_{\mrm{L}^2(\Om)}  +\|(r+\delta)^{1-\beta}\psi\,(\delta^{i\mu}\pi_{+}(v)Z_{\delta}- \hat{m}_{\delta})\|_{\mrm{L}^2(\Om)} \\[4pt]
\le & \delta^{4-\beta}\|(1+\rho)^{1-\beta}\psi_{1/\delta}\,U\|_{\mrm{L}^2(\Xi)}  +C\,\delta^{2-\beta}\,|\pi_+(v)|\,\|(1+\rho)^{1-\beta}\psi_{1/\delta}\,\tilde{Z}\|_{\mrm{L}^2(\Xi)}\\[4pt] 
\le & C\,\delta^{2-\beta+\gamma}\|(1+\rho)^{-\beta+\gamma-1}\,U\|_{\mrm{L}^2(\Xi)}  +C\,\delta^{2-\beta}\,\|\tilde{Z}\|_{\mathcal{V}^1_{2-\beta}(\Xi)}\,\|f\|_{\mV^0_{1-\beta,\delta}(\Om)}\\[4pt] 
\le & C\,\delta^{\gamma/2}\,\|f\|_{\mV^0_{1-\beta,\delta}(\Om)}.
\end{array}
\end{equation} 
On the other hand, with (\ref{estimMoinsUn}), we obtain
\begin{equation}\label{tiroir3}
\begin{array}{lcl}
\|(r+\delta)^{1-\beta}\psi_{\delta}\,\hat{m}_{\delta}\|_{\mrm{L}^2(\Om)} & \le & |\pi_+(v)|\,\|(r+\delta)^{1-\beta}\psi_{\delta}\|_{\mrm{L}^{2}(\Om)}\,\|\big(r^{+i\mu}+ 
(\delta/\delta_{\bullet})^{2i\mu}r^{-i\mu}\big)\phi\|_{\mrm{L}^{\infty}(\Om)}\\[3pt]
 & \le &  C\,\|(r+\delta)^{1-\beta}\psi_{\delta}\|_{\mrm{L}^2(\Om)}\,\|f\|_{\mV^0_{1-\beta,\delta}(\Om)} \ \le  \ 
C\,\delta^{2-\beta}\,\|f\|_{\mV^0_{1-\beta,\delta}(\Om)}.
\end{array}
\end{equation}
Plugging (\ref{tiroir2}) and (\ref{tiroir3}) in (\ref{tiroir1}) provides a good estimate for the third term of the right hand side of (\ref{defExplicitOpK}).\\
\newline
$\star$ Proof of (\ref{ToProve3}). We have 
\begin{equation}\label{eqnExpansion}
\|\hat{u}^{\delta}\|_{\mrm{L}^2(\Om)} \le \|\chi_{\delta}\,v\|_{\mrm{L}^2(\Om)} + \|\psi\,(\hat{V}_{\delta}-\chi_{\delta}\,\hat{m}_{\delta})\|_{\mrm{L}^2(\Om)}.
\end{equation}
Exploiting (\ref{estimMoinsUn}), we get
\begin{equation}\label{estimInterFin1}
\begin{array}{lcl}
\|\chi_{\delta}\,v\|_{\mrm{L}^2(\Om)} & \le & |\pi_+(v)|\,\|\sgp + (\delta/\delta_{\bullet})^{2i\mu}\sgm\|_{\mrm{L}^2(\Om)}+\Vert v - \pi_{+}(v)\,(\sgp + (\delta/\delta_{\bullet})^{2i\mu}\sgm)\Vert_{\mrm{L}^2(\Omega)}\\[4pt]
& \le &  C\,(\|f\|_{\mV^0_{1-\beta,\delta}(\Om)}+\Vert v - \pi_{+}(v)\,(\sgp + (\delta/\delta_{\bullet})^{2i\mu}\sgm)\Vert_{\mV_{-\beta}^{1}(\Omega)}) \ \le\  C\,\|f\|_{\mV^0_{1-\beta,\delta}(\Om)}.
\end{array}
\end{equation}
For the second term of the right hand side of (\ref{eqnExpansion}), adapting (\ref{tiroir1})--(\ref{tiroir3}), we find
\begin{equation}\label{estimInterFin2}
\|\psi\,(\hat{V}_{\delta}-\chi_{\delta}\,\hat{m}_{\delta})\|_{\mrm{L}^2(\Om)} \le C\,\|f\|_{\mV^0_{1-\beta,\delta}(\Om)}.
\end{equation}
Plugging (\ref{estimInterFin1}), (\ref{estimInterFin2}) in (\ref{eqnExpansion}) gives $\|\hat{u}^{\delta}\|_{\mrm{L}^2(\Om)}\le C\,\|f\|_{\mV^0_{1-\beta,\delta}(\Om)}$, which is exactly Estimate (\ref{ToProve3}).
\end{proof}

\subsection{Second asymptotic expansion}

In this section, we construct a second asymptotic expansion of $u^{\delta}$ the solution to Problem (\ref{PbAprioriEstimate}). This expansion will be a bit different from the one derived in \S\ref{paragraphFirstAsymptotic}. In particular, it will involve directly the function $v^{\delta}:=(\mathfrak{A}^{\delta}+i\mrm{Id})^{-1}f$. This feature will be very useful to prove in \S\ref{paragraphFinalProof} that $v^{\delta}$ is a good approximation of $u^{\delta}$, which is our final goal. \\
\newline
Set $\breve{V}_{\delta}(\bfx)=\delta^{i\mu}\pi_{+}(v^{\delta})Z_{\delta}(\bfx)$, with $Z_{\delta}$ as in (\ref{constructionUhat}), and  $\breve{m}_{\delta}(\bfx)=\pi_{+}(v^{\delta})\big( r^{+i\mu}+ 
(\delta/\delta_{\bullet})^{2i\mu}r^{-i\mu}\big)\phi(\theta)$. Then, define the linear map $\breve{\mrm{R}}^{\delta}:\mrm{L}^2(\Om)\to\mrm{L}^2(\Om)$ such that $\breve{\mrm{R}}^{\delta}f=\breve{u}^{\delta}$ with
\begin{equation}\label{constructionUtilde}
\breve{u}^{\delta} = \chi_{\delta}\,v^{\delta} + \psi\,\breve{V}_{\delta} - \psi\chi_{\delta}\,\breve{m}_{\delta}. 
\end{equation} 
Note that $\breve{m}_{\delta}$ represents the main contribution of $v^{\delta}$ (resp. $\breve{V}_{\delta}$) as $r\to0$ (resp. $r/\delta\to+\infty$). Using the uniform stability estimate for $(\mA^{\delta}+i\mrm{Id})^{-1}$, we will show that $\breve{\mrm{R}}^{\delta}f=\breve{u}^{\delta}$ is a good approximation of $(\mA^{\delta}+i\mrm{Id})^{-1}f=u^{\delta}$ as $\delta$ goes to zero.
\begin{proposition}\label{proposition error estimate}
Under Assumption \ref{assumption1}, for any $\eps>0$, there is $C_{\eps}>0$ independent of $\delta$ such that 
\begin{equation}\label{lemma stability estimate result tilde}
\sup_{f\in\mrm{L}^2(\Om)\setminus\{0\}}\frac{
\|(\mA^{\delta}+i\mrm{Id})^{-1}f - \breve{\mrm{R}}^{\delta}f\|_{\mL^{2}(\Omega)}}{
\|f\|_{\mrm{L}^2(\Om)}}\le C_{\eps}\,\delta^{1-\eps},\qquad\forall \delta\in(0;1].
\end{equation}
\end{proposition}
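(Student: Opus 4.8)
The plan is to follow, step for step, the proof of Proposition \ref{proposition stability estimate}, with $\breve{\mrm{R}}^{\delta}$ in place of $\hat{\mrm{R}}^{\delta}$, and then to feed the outcome into the uniform stability estimate (\ref{lemma stability estimate result}) that is now available. The one genuinely new ingredient is the relation between $\breve{\mrm{R}}^{\delta}f$ and $v^{\delta}$. Since $\mathfrak{A}^{\delta}$ is self-adjoint, $v^{\delta}:=(\mathfrak{A}^{\delta}+i\mrm{Id})^{-1}f$ is well defined, solves $-\div(\sigma^{0}\nabla v^{\delta})+iv^{\delta}=f$ in $\Om$, and lies in $D(\mathfrak{A}^{\delta})$; hence $v^{\delta}=\pi_{+}(v^{\delta})(\sgp+(\delta/\delta_{\bullet})^{2i\mu}\sgm)+\tilde{v}^{\delta}$ with $\tilde{v}^{\delta}\in D(\mathfrak{A})\subset\mathring{\mV}^{1}_{-1}(\Om)$. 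Applying Proposition \ref{defFarField} with $\beta=1$ to the source $f$, together with $\|f\|_{\mathring{\mV}^{1}_{1}(\Om)^{*}}\le\|f\|_{\mL^{2}(\Om)}$ (as in the proof of Proposition \ref{propoDcpDA}), gives the bound $|\pi_{+}(v^{\delta})|+\|\tilde{v}^{\delta}\|_{\mV^{1}_{-1}(\Om)}\le C\|f\|_{\mL^{2}(\Om)}$ with $C$ independent of $\delta\in(0;1]$.

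Next one computes $(\mA^{\delta}+i\mrm{Id})\breve{\mrm{R}}^{\delta}f$ exactly as in (\ref{commutator hat})--(\ref{defExplicitOpK}). The Leibniz rule, the splitting $[\div(\sigma^{\delta}\nabla\cdot),\psi\chi_{\delta}]=[\div(\sigma^{\delta}\nabla\cdot),\chi_{\delta}]+[\div(\sigma^{\delta}\nabla\cdot),\psi]$ (valid since $\nabla\psi$, $\nabla\chi_{\delta}$ have disjoint supports), and the three facts that $\sigma^{\delta}=\sigma^{0}$ on $\supp\chi_{\delta}$ (so $-\chi_{\delta}\div(\sigma^{\delta}\nabla v^{\delta})+i\chi_{\delta}v^{\delta}=\chi_{\delta}f=f-\psi_{\delta}f$), that $\sigma^{\delta}(\bfx)=\sigma^{\infty}(\bfx/\delta)$ on $\mrm{D}(O,2)\supset\supp\psi$ with $\div(\sigma^{\infty}\nabla Z)=0$ in $\Xi$ (so $\psi\div(\sigma^{\delta}\nabla\breve{V}_{\delta})=0$), and that $\sigma^{\delta}=\sigma^{0}$ on $\supp(\psi\chi_{\delta})\subset\{\delta\le r\le2\}$ while $r^{\pm i\mu}\phi$ solve $\div(\sigma^{0}\nabla\cdot)=0$ in the cone (so $\psi\chi_{\delta}\div(\sigma^{\delta}\nabla\breve{m}_{\delta})=0$), together yield $\breve{\mrm{R}}^{\delta}f\in D(\mA^{\delta})$ and $(\mA^{\delta}+i\mrm{Id})\breve{\mrm{R}}^{\delta}f=f+\breve{\mrm{K}}^{\delta}f$ with
\[
\breve{\mrm{K}}^{\delta}f=-\psi_{\delta}f-\big[\div(\sigma^{\delta}\nabla\cdot),\chi_{\delta}\big](v^{\delta}-\breve{m}_{\delta})-\big[\div(\sigma^{\delta}\nabla\cdot),\psi\big](\breve{V}_{\delta}-\breve{m}_{\delta})+i\psi\,(\breve{V}_{\delta}-\chi_{\delta}\breve{m}_{\delta}).
\]
Compared with $\hat{\mrm{K}}^{\delta}f$ in (\ref{defExplicitOpK}), the only new term is $-\psi_{\delta}f$, supported in $\mrm{D}(O,2\delta)$, which appears because $v^{\delta}$ already reproduces the full source $f$, at the price of a small defect near $O$ (the near field source term $U$ disappears).

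The heart of the argument is the term-by-term estimate of $\|\breve{\mrm{K}}^{\delta}f\|_{\mV^{0}_{1-\eps,\delta}(\Om)}$, carried out with a weight $\beta\in(0;\eps)$ in (\ref{weighted space bounded}). First, $\|\psi_{\delta}f\|_{\mV^{0}_{1-\beta,\delta}(\Om)}\le C\delta^{1-\beta}\|f\|_{\mL^{2}(\Om)}$ since $\psi_{\delta}$ lives in $\mrm{D}(O,2\delta)$. On $\supp\nabla\chi_{\delta}=\mathbb{Q}^{\delta}$ one has $\sgpm=r^{\pm i\mu}\phi$ (for $\delta$ small), hence $v^{\delta}-\breve{m}_{\delta}=\tilde{v}^{\delta}$ there, and estimating as in (\ref{estimateReminder1}) with $\gamma=1-\beta$ and using the uniform bound on $\|\tilde{v}^{\delta}\|_{\mV^{1}_{-1}(\Om)}$ gives $C\delta^{1-\beta}\|f\|_{\mL^{2}(\Om)}$. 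On $\supp\nabla\psi\subset\{1\le r\le2\}$, for $\delta$ small enough that $\chi(r/\delta)=1$ there, the matching identities $\delta^{i\mu}(r/\delta)^{i\mu}=r^{i\mu}$ and $\delta^{i\mu}c_{Z}(r/\delta)^{-i\mu}=(\delta/\delta_{\bullet})^{2i\mu}r^{-i\mu}$ (which use $c_{Z}=\delta_{\bullet}^{-2i\mu}$) show that $\breve{V}_{\delta}-\breve{m}_{\delta}=\delta^{i\mu}\pi_{+}(v^{\delta})\tilde{Z}(\cdot/\delta)$; the change of variables $\bfx=\delta\bfxi$ and the decay of $\tilde{Z}\in\mathring{\mVc}^{1}_{1}(\Xi)$ then give $C\delta\,|\pi_{+}(v^{\delta})|\,\|\tilde{Z}\|_{\mVc^{1}_{1}(\Xi)}\le C\delta\|f\|_{\mL^{2}(\Om)}$. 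Finally $\psi(\breve{V}_{\delta}-\chi_{\delta}\breve{m}_{\delta})=\psi(\breve{V}_{\delta}-\breve{m}_{\delta})+\psi_{\delta}\breve{m}_{\delta}$ (for $\delta$ small): the term $\psi_{\delta}\breve{m}_{\delta}$ is bounded by $C\delta^{2-\beta}\|f\|_{\mL^{2}(\Om)}$ as in (\ref{tiroir3}), while $\psi(\breve{V}_{\delta}-\breve{m}_{\delta})$ is handled by splitting $\supp\psi$ into $\{r\ge2\delta\}$, where again $\breve{V}_{\delta}-\breve{m}_{\delta}=\delta^{i\mu}\pi_{+}(v^{\delta})\tilde{Z}(\cdot/\delta)$ and the change of variables with $\tilde{Z}\in\mathring{\mVc}^{1}_{\beta_{0}}(\Xi)$, $\beta_{0}\in[2-\eps;2)$, gives $C\delta^{2-\eps}\|f\|_{\mL^{2}(\Om)}$, and $\{r<2\delta\}$, where crude bounds using $Z\in\mL^{2}_{\loc}(\Xi)$ give $C\delta^{2-\beta}\|f\|_{\mL^{2}(\Om)}$. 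Collecting these, $\|\breve{\mrm{K}}^{\delta}f\|_{\mV^{0}_{1-\eps,\delta}(\Om)}\le C_{\eps}\delta^{1-\eps}\|f\|_{\mL^{2}(\Om)}$.

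To conclude, since $\breve{\mrm{R}}^{\delta}f\in D(\mA^{\delta})$ one has $(\mA^{\delta}+i\mrm{Id})^{-1}f-\breve{\mrm{R}}^{\delta}f=-(\mA^{\delta}+i\mrm{Id})^{-1}\breve{\mrm{K}}^{\delta}f$, and Proposition \ref{proposition stability estimate} applied with the chosen $\beta\in(0;\eps)\subset(0;1)$ yields $\|(\mA^{\delta}+i\mrm{Id})^{-1}f-\breve{\mrm{R}}^{\delta}f\|_{\mL^{2}(\Om)}\le C_{\eps}\|\breve{\mrm{K}}^{\delta}f\|_{\mV^{0}_{1-\eps,\delta}(\Om)}\le C_{\eps}\delta^{1-\eps}\|f\|_{\mL^{2}(\Om)}$, which is (\ref{lemma stability estimate result tilde}). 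The main obstacle is precisely this bookkeeping: correctly tracking the cancellations in the overlap region $\{\delta\lesssim r\lesssim1\}$ between the singular parts of $v^{\delta}$, $\breve{V}_{\delta}$ and $\breve{m}_{\delta}$ — which is exactly where the definition of $\delta_{\bullet}$, equivalently $c_{Z}$, enters — while keeping every constant uniform in $\delta$, done through the $\ln\delta$-periodicity of $\delta\mapsto(\delta/\delta_{\bullet})^{2i\mu}$ as in Propositions \ref{defFarField} and \ref{proposition stability estimate}.
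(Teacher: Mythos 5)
Your proposal is correct and follows essentially the same route as the paper's proof: you derive the same remainder operator $\breve{\mrm{K}}^{\delta}$, bound its terms in the weighted norm $\mV^{0}_{1-\beta,\delta}(\Om)$ using the uniform estimate on $\pi_{+}(v^{\delta})$ and $\tilde{v}^{\delta}$ (the paper's (\ref{eqnUnifContinuityPropoModel})) together with the decay of $\tilde{Z}$, and conclude via the stability estimate of Proposition \ref{proposition stability estimate}. If anything, you are slightly more careful than the paper in the region $\{r<2\delta\}$ where $\chi(\rho)\neq 1$ (which (\ref{tiroir2tilde}) glosses over) and in writing the last term of $\breve{\mrm{K}}^{\delta}f$ without the spurious factor $i$ in front of $\chi_{\delta}\breve{m}_{\delta}$.
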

\begin{proof}
Working as in (\ref{commutator hat}), we find $(\mA^{\delta}+i\mrm{Id})(\breve{u}^{\delta}-u^{\delta})= -\div(\sigma^{\delta}\nabla (\breve{u}^{\delta}-u^{\delta}))+i (\breve{u}^{\delta}-u^{\delta})=\breve{\mrm{K}}^{\delta}f$ with 
\begin{equation}\label{commutator tilde}
\breve{\mrm{K}}^{\delta}f= -\psi_{\delta}f -\big[\div(\sigma^{\delta}\nabla\cdot),\chi_{\delta}\big](v^{\delta}-\breve{m}_{\delta})-\big[\div(\sigma^{\delta}\nabla\cdot),\psi\big](\breve{V}_{\delta}-\breve{m}_{\delta}) +i\psi\,(\breve{V}_{\delta} - i\chi_{\delta}\,\breve{m}_{\delta}).
\end{equation}
According to Proposition \ref{proposition error estimate}, we have 
\begin{equation}\label{UtilisationStability}
\|(\mA^{\delta}+i\mrm{Id})^{-1}f - \breve{\mrm{R}}^{\delta}f\|_{\mL^{2}(\Omega)}=\|u^{\delta}-\breve{u}^{\delta}\|_{\mL^{2}(\Omega)} = \|(\mA^{\delta}+i\mrm{Id})^{-1}(\breve{\mrm{K}}^{\delta}f)\|_{\mL^{2}(\Omega)} \le C\,\|\breve{\mrm{K}}^{\delta}f\|_{\mV^0_{1-\beta,\delta}(\Om)},
\end{equation}
where $C$ is a constant independent from $\delta$ and where $\beta$ is set in $(0;1)$. Therefore, we see it is sufficient to examine each of the terms of the right hand side of (\ref{commutator tilde}). For the first one, we can write
\[
\|\psi_{\delta}f\|_{\mV^0_{1-\beta,\delta}(\Om)} = \|(r+\delta)^{1-\beta}\psi_{\delta}f\|_{\mrm{L}^2(\Om)} \le C\,\delta^{1-\beta}\,\|f\|_{\mrm{L}^2(\Om)}.
\]
For the second one, using the estimate
\begin{equation}\label{eqnUnifContinuityPropoModel}
|\pi_+(v^{\delta})|+ \Vert v^{\delta} - \pi_{+^{\delta}}(v)\,(\sgp + (\delta/\delta_{\bullet})^{2i\mu}\sgm)\Vert_{\mV_{-1}^{1}(\Omega)}\leq C\,\Vert f\Vert_{\mathring{\mV}_{1}^{1}(\Omega)^{*}} \leq C\,\|f\|_{\mrm{L}^2(\Om)},
\end{equation}
(the same as (\ref{eqnUnifContinuityPropo}) with $\beta=1$) and mimicking (\ref{estimateReminder1}), we find
\begin{equation}\label{estimateReminder1tilde}
\begin{array}{ll}
& \|\big[\div(\sigma^{\delta}\nabla\cdot),\chi_{\delta}\big](v^{\delta}-\breve{m}_{\delta})\|_{\mV^0_{1-\beta,\delta}(\Om)}\\[4pt] 
 \le &
\|(r+\delta)^{1-\beta}\sigma^{\delta}\nabla\chi_{\delta}\cdot\nabla(v^{\delta}-\breve{m}_{\delta})\|_{\mrm{L}^2(\Om)}+\|(r+\delta)^{1-\beta}(v^{\delta}-\breve{m}_{\delta})\,\div(\sigma^{\delta}\nabla\chi_{\delta})\|_{\mrm{L}^2(\Om)}\\[4pt]
 \le & C\,(\delta^{-1}\|(r+\delta)^{1-\beta}\nabla(v^{\delta}-\breve{m}_{\delta})\|_{\mrm{L}^2(\mathbb{Q}^{\delta})}+\delta^{-2}\|(r+\delta)^{1-\beta}(v^{\delta}-\breve{m}_{\delta})\|_{\mrm{L}^2(\mathbb{Q}^{\delta})})\\[4pt]
 \le & C\,\delta^{1-\beta}\,(\|r^{-1}\nabla(v^{\delta}-\breve{m}_{\delta})\|_{\mrm{L}^2(\mathbb{Q}^{\delta})}+\|r^{-2}(v^{\delta}-\breve{m}_{\delta})\|_{\mrm{L}^2(\mathbb{Q}^{\delta})})\\[4pt]
 \le & C\,\delta^{1-\beta}\,\|v^{\delta}-\breve{m}_{\delta}\|_{\mV^1_{-1}(\Om)} \ \le\ C\,\delta^{1-\beta}\,\|f\|_{\mrm{L}^2(\Om)}.
\end{array}
\end{equation}
Analogously, we obtain 
$\|\big[\div(\sigma^{\delta}\nabla\cdot),\psi\big](\breve{V}_{\delta}-\breve{m}_{\delta})\|_{\mV^0_{1-\beta,\delta}(\Om)} \le C\,\delta^{1-\beta}\,\|f\|_{\mrm{L}^2(\Om)}$. Now, we work on the fourth term of the right hand side of (\ref{commutator tilde}). We have
\begin{equation}\label{tiroir1tilde}
 \|\psi\,(\breve{V}_{\delta} - \chi_{\delta}\,\breve{m}_{\delta})\|_{_{\mV^0_{1-\beta,\delta}(\Om)}} \le  \|(r+\delta)^{1-\beta}\psi\,(\breve{V}_{\delta} - \breve{m}_{\delta})\|_{\mrm{L}^2(\Om)}+\|(r+\delta)^{1-\beta}\psi_{\delta}\,\breve{m}_{\delta}\|_{\mrm{L}^2(\Om)}.
\end{equation}
On the one hand, using (\ref{eqnUnifContinuityPropoModel}), we find 
\begin{equation}\label{tiroir2tilde}
\begin{array}{lcl}
\|(r+\delta)^{1-\beta}\psi\,(\breve{V}_{\delta} - \breve{m}_{\delta})\|_{\mrm{L}^2(\Om)}& \hspace{-0.2cm}= & \hspace{-0.2cm}\|(r+\delta)^{1-\beta}\psi\,(\delta^{i\mu}\pi_{+}(v^{\delta})Z_{\delta}- \breve{m}_{\delta})\|_{\mrm{L}^2(\Om)} \\[4pt]
&\hspace{-0.2cm}\le & \hspace{-0.2cm} C\,\delta^{2-\beta}\,\|(1+\rho)^{1-\beta}\psi_{1/\delta}\,\tilde{Z}\|_{\mrm{L}^2(\Xi)}\,\|f\|_{\mrm{L}^2(\Om)} \le  C\,\delta^{2-\beta}\,\|f\|_{\mrm{L}^2(\Om)}.
\end{array}
\end{equation} 
On the other hand, again with (\ref{eqnUnifContinuityPropoModel}), we get
\begin{equation}\label{tiroir3tilde}
\begin{array}{lcl}
\|(r+\delta)^{1-\beta}\psi_{\delta}\,\breve{m}_{\delta}\|_{\mrm{L}^2(\Om)} & \le & |\pi_+(v)|\,\|(r+\delta)^{1-\beta}\psi_{\delta}\|_{\mrm{L}^{2}(\Om)}\,\|r^{+i\mu}+ 
(\delta/\delta_{\bullet})^{2i\mu}r^{-i\mu}\big)\phi\|_{\mrm{L}^{\infty}(\Om)}\\[3pt]
 & \le &  C\,\|(r+\delta)^{1-\beta}\psi_{\delta}\|_{\mrm{L}^2(\Om)}\,\|f\|_{\mrm{L}^2(\Om)} \ \le  \ 
C\,\delta^{2-\beta}\,\|f\|_{\mrm{L}^2(\Om)}.
\end{array}
\end{equation}
Plugging (\ref{tiroir2tilde}) and (\ref{tiroir3tilde}) in (\ref{tiroir1tilde}) furnishes a good estimate for the fourth term of the right hand side of (\ref{commutator tilde}).\\
\newline
Gathering (\ref{estimateReminder1tilde})-(\ref{tiroir3tilde}), we deduce $\|\breve{\mrm{K}}^{\delta}f\|_{\mV^0_{1-\beta,\delta}(\Om)} \le C\,\delta^{1-\beta}\,\|f\|_{\mrm{L}^2(\Om)}$. Together with (\ref{UtilisationStability}), this yields $\|(\mA^{\delta}+i\mrm{Id})^{-1}f - \breve{\mrm{R}}^{\delta}f\|_{\mL^{2}(\Omega)} \le C\,\delta^{1-\beta}\,\|f\|_{\mrm{L}^2(\Om)}$. Taking the supremum over all $f\in\mL^{2}(\Omega)$, since this inequality is true for all $\beta\in(0;1)$, finally we obtain (\ref{lemma stability estimate result tilde}).
\end{proof}

\subsection{Proof of Theorem \ref{propoErrorEtsimate}}\label{paragraphFinalProof}

Consider some given $f\in\mrm{L}^2(\Om)$ and again, set $u^{\delta}=(\mA^{\delta}+i\mrm{Id})^{-1}f$, $\breve{u}^{\delta}=\breve{\mrm{R}}^{\delta}f$, $v^{\delta}=(\mathfrak{A}^{\delta}+i\mrm{Id})^{-1}f$.
Using triangular inequality and the result of Proposition \ref{proposition error estimate}, we can write 
\begin{equation}\label{ImportantTriangular}
\begin{array}{ll}
 & \Vert (\mA^{\delta}+i\mrm{Id})^{-1}f - (\mathfrak{A}^{\delta}+i\mrm{Id})^{-1}f \Vert_{\mL^{2}(\Omega)} \\[5pt] 
  \le & \Vert (\mA^{\delta}+i\mrm{Id})^{-1}f - \breve{\mrm{R}}^{\delta}f \Vert_{\mL^{2}(\Omega)}+\Vert \breve{\mrm{R}}^{\delta}f - (\mathfrak{A}^{\delta}+i\mrm{Id})^{-1}f \Vert_{\mL^{2}(\Omega)}\\[5pt]
    \le & C\,\delta^{1-\eps}\,\|f\|_{\mrm{L}^2(\Om)}+\Vert \breve{u}^{\delta} - v^{\delta} \Vert_{\mL^{2}(\Omega)}.
\end{array}
\end{equation}
Let us assess the term $\Vert\breve{u}^{\delta} - v^{\delta}\Vert_{\mrm{L}^2(\Om)}$. Observing that $\psi\chi_{\delta} = \psi(1-\psi_{\delta}) = \psi-\psi_{\delta}$, we find
\begin{equation}\label{termsToEstimate1}
\begin{array}{l}
\breve{u}^{\delta} - v^{\delta} = -\psi_{\delta}\,( v^{\delta} - \breve{m}_{\delta}) +\psi\,(\breve{V}_{\delta} - \breve{m}_{\delta}).
\end{array}
\end{equation}
We need to derive a proper upper bound for the $\mL^{2}$-norm of each contribution in the right-hand side above. First of all, note that $v^{\delta}-\breve{m}_{\delta}\in\mV^{1}_{-1}(\Omega)$. Observing that the support of $\psi_{\delta}$ is included in the disk $\overline{\mrm{D}(O,2\delta )}$, with (\ref{eqnUnifContinuityPropoModel}), we obtain
\begin{equation}\label{termsToEstimate2}
\Vert \psi_{\delta}\,(v^{\delta} - \breve{m}_{\delta})\Vert_{\mL^{2}(\Omega)}  \le  C\,\delta^2\,\Vert v^{\delta} - \breve{m}_{\delta}\Vert_{\mV^{0}_{-2}(\Omega)}  \le   C\,\delta^2\,\Vert v^{\delta} - \breve{m}_{\delta}\Vert_{\mV^{1}_{-1}(\Omega)} \le   C\,\delta^2\,\Vert f\Vert_{\mrm{L}^2(\Omega)},
\end{equation}
where $C$ is independent of $\delta$. To deal with the second term of the right-hand side of (\ref{termsToEstimate1}), we simply use (\ref{tiroir2tilde}) which gives 
\begin{equation}\label{termsToEstimate3}
\begin{array}{lcl}
\|\psi\,(\breve{V}_{\delta} - \breve{m}_{\delta})\|_{\mrm{L}^2(\Om)} & = & \delta\,|\pi_+(v)|\,\|\psi_{1/\delta}\,\tilde{Z}\|_{\mrm{L}^2(\Xi)}\\[4pt]
 & \le & C\,\delta^{2-\eps}\,\|(1+\rho)^{1-\eps}\psi_{1/\delta}\,\tilde{Z}\|_{\mrm{L}^2(\Xi)}\,\|f\|_{\mrm{L}^2(\Om)} \\[4pt]
 &\le & C\,\delta^{2-\eps}\,\|\tilde{Z}\|_{\mathcal{V}^1_{2-\eps}(\Xi)}\,\|f\|_{\mrm{L}^2(\Om)}\le C\,\delta^{2-\eps}\,\|f\|_{\mrm{L}^2(\Om)}.
\end{array}
\end{equation}
From (\ref{termsToEstimate1})--(\ref{termsToEstimate3}), we infer
\begin{equation}\label{estimImportante2}
\Vert\breve{u}^{\delta} - v^{\delta}\Vert_{\mrm{L}^2(\Om)}\le C\,\delta^{2-\eps}\,\|f\|_{\mrm{L}^2(\Om)}.
\end{equation}
Finally, plugging (\ref{estimImportante2}) in (\ref{ImportantTriangular}) leads to the result of Theorem \ref{propoErrorEtsimate}.

\section{Numerical illustrations}\label{Numerics}

To illustrate the results we proved in the two previous sections, we approximate numerically the spectrum of Problem (\ref{ExPb}) in a canonical geometry. The geometry will be chosen so that we can separate variables and thus, proceed to explicit computations. The framework (see Figure \ref{geom part}) will be slightly different from the 
one introduced in Section \ref{Description of the problem} because $\overline{\Omega_{+}^{\delta}}\cup \overline{\Omega_{-}^{\delta}}$ will not be a fixed domain. 
However, the analysis we provided all along this paper could be extended without difficulty to the geometry studied here and we would obtain analogous results. 

\begin{figure}[!ht]
\centering
\begin{tikzpicture}
\fill[draw=none,fill=gray!10] (3.6,0) arc (0:180:3.6) -- (-0.8,0) arc (180:0:0.8)--cycle;
\fill[draw=none,fill=gray!40] (3.6,0) arc (0:45:3.6) -- (0.5656,0.5656) arc (45:0:0.8)--cycle;
\draw[line width = 0.4mm] (3.6,0) arc (0:180:3.6) -- (-0.8,0) arc (180:0:0.8)--cycle;
\draw[black, line width = 0.4mm](-0.1,0)--(0.1,0);
\draw[black, line width = 0.4mm](0.565685425,0.565685425)--(2.5445584412,2.5445584412);
\draw[black, line width = 0.4mm](0.414264069,0.697106781)--(0.565685425,0.848528138)--(0.707106781,0.707106781);
\draw[black, line width = 0.4mm](2.403137085,2.685979797)--(2.261715729,2.5445584412)--(2.403137085,2.403137085);
\draw[black, line width =1pt,->] (1.1,0) arc (0:45:1.1);
\node at (1.5,0.4){$\pi/4$};
\draw[black, line width = 0.2mm,<->,dotted](0.05,0.05)--(0.55,0.55);
\node at (0.4,0.15){\small $\delta$};
\draw[black, line width = 0.4mm](0,-0.1)--(0,0.1);
\node at (0,-0.4){$O$};
\node at (2.8,2.8){$O'$};
\node at (2.3,1.1){$\Om_{-}^{\delta}$};
\node at (-1.4,1.1){$\Om_{+}^{\delta}$};
\end{tikzpicture}\hspace{0.6cm}
\begin{tikzpicture}
\fill[draw=none,fill=gray!10] (3.6,0) arc (0:180:3.6);
\fill[draw=none,fill=gray!40] (0,0) -- (3.6,0) arc (0:45:3.6);
\draw[line width = 0.4mm] (3.6,0) arc (0:180:3.6) -- cycle;
\draw[black, line width = 0.4mm](0,0)--(2.5445584412,2.5445584412);
\draw[black, line width = 0.4mm](2.403137085,2.685979797)--(2.261715729,2.5445584412)--(2.403137085,2.403137085);
\draw[black, line width =1pt,->] (1.1,0) arc (0:45:1.1);
\node at (1.5,0.4){$\pi/4$};
\node at (0,-0.4){$O$};
\node at (2.8,2.8){$O'$};
\node at (2.3,1.1){$\Om_{-}^{\delta}$};
\node at (-1.4,1.1){$\Om_{+}^{\delta}$};
\end{tikzpicture}
\caption{Domains $\Om^{\delta}$ and $\Om^{0}$.\label{geom part}}
\end{figure}

\noindent Let us first describe the geometry. Consider $\delta\in(0;1)$ and define (see Figure \ref{geom part})
\[
\begin{array}{lcl}
\Om_{+}^{\delta} & := & \{\;(r\cos\theta,r\sin\theta)\ \vert\;\delta<r<1,\;\;\pi/4<\theta<\pi \;\};\\
\Om_{-}^{\delta} & := & \{\;(r\cos\theta,r\sin\theta)\ \vert\;\delta<r<1,\;\;0<\theta<\pi/4 \;\};\\
\Om^{\delta} & := & \{\;(r\cos\theta,r\sin\theta)\ \vert\;\delta<r<1,\;\;0<\theta<\pi \;\}.
\end{array}
\]
Introduce the function $\sigma^{\delta}:\Omega^{\delta}\to \R$ such that $\sigma^{\delta} = \sigma_{\pm}$ in $\Omega_{\pm}^{\delta}$, 
where $\sigma_{+}>0$ and $\sigma_{-}<0$ are constants. We are interested in the eigenvalue problem
\begin{equation}\label{ExPb canon}
\begin{array}{|l}
\dsp{ \textrm{Find}\;(\lambda^{\delta},u^{\delta})\in\Cplx\times\mH^{1}_{0}(\Omega^{\delta})\setminus\{0\}\mbox{ such that}\quad}\\[6pt]
\dsp{ -\div(\sigma^{\delta}\nabla u^{\delta}) = \lambda^{\delta}u^{\delta}\quad\mbox{ in }\Omega^{\delta}.}
\end{array}
\end{equation}
We define the unbounded operator $\mA^{\delta}:D(\mA^{\delta})\to \mL^{2}(\Omega^{\delta})$ such that 
\begin{equation}\label{ExOp canon}
\begin{array}{|l}
\mA^{\delta}\, v \;=\; -\mrm{div}(\sigma^{\delta}\nabla v)\\[6pt]
D(\mA^{\delta})\;:=\{v\in\mH^{1}_{0}(\Omega^{\delta})\;\vert\; \mrm{div}(\sigma^{\delta}\nabla v)\in\mL^{2}(\Omega^{\delta})\}.
\end{array}
\end{equation}
Using an explicit computation relying on the separation of variables, we proved in \cite{ChCNSu} that for $\kappa_{\sigma}=\sigma_-/\sigma_+\in(-\infty,-1)\cup(-1/3;0)$, the operator $\mA^{\delta}$ is injective for all $\delta\in(0;1)$. Moreover, for $\kappa_{\sigma}\in(-1;-1/3)$, $\mA^{\delta}$ is injective  if and only if $\delta\in(0;1)\backslash\cup_{n\in\N^{\ast}}\{\delta^n\}$ with 
\begin{equation}\label{sequence delta n}
\delta^n = \exp{\left(-\cfrac{n\pi^2}{2\,\mrm{acosh}(\frac{1-\kappa_{\sigma}}{2(1+\kappa_{\sigma})})}\right)}
\underset{n\to\infty}{\longrightarrow}0.
\end{equation}

\noindent Now, we discretize Problem (\ref{ExPb canon}). For details concerning the process, we refer the reader to \cite{BoCZ10,NiVe11,ChCiAc}. We impose $\sigma_+=1$. Let us consider $(\mathcal{T}^{\delta}_h)_h$ a shape regular family of triangulations of $\overline{\Omega^{\delta}}$, 
made of triangles. Moreover, we assume that, for any triangle $\tau$, one has either $\tau\subset \overline{\Omega^{\delta}_1}$ 
or $\tau\subset \overline{\Omega^{\delta}_2}$. Define the family of finite element spaces
\[
\mV^{\delta}_h := \left\{v\in \mH_{0}^{1}(\Om^{\delta})\mbox{ such that }v|_{\tau}\in \mathbb{P}_1(\tau)\mbox{ for all }\tau\in\mathcal{T}^{\delta}_h\right\},
\]
where $\mathbb{P}_1(\tau)$ is the space of polynomials of degree at most $1$ on the triangle $\tau$. Let us consider the problem
\begin{equation}\label{Pb h}
\begin{array}{|l}
\dsp{ \textrm{Find}\;(\lambda^{\delta}_h,u^{\delta}_h)\in\Cplx\times\mV^{\delta}_h\setminus\{0\}\mbox{ such that}\quad}\\[6pt]
(\sigma^{\delta} \nabla u^{\delta}_h,\nabla v^{\delta}_h)_{\mrm{L}^2(\Om^{\delta})} = \lambda^{\delta}_h(u^{\delta}_h,v^{\delta}_h)_{\mrm{L}^2(\Om^{\delta})},\quad
\forall v^{\delta}_h\in\mV^{\delta}_h.
\end{array}
\end{equation}
\quad\\
\textsc{$\bullet$ Outside the critical interval (-1;-1/3)}\quad\\ 
In Figure \ref{spectreHI}, we display the ten eigenvalues of smallest modulus of Problem (\ref{Pb h}) with respect to $-\ln\delta$ for a contrast $\kappa_{\sigma}=\sigma_{-}/\sigma_{+}=-1-10^{-4}\notin(-1;-1/3)$. In this case, it is proved in \cite{BoCC12} that the limit problem for $\delta=0$ (see Figure \ref{geom part}, on right) is well-posed in the Fredholm sense in $\mH^1_0(\Om)$. The operator $\mA^0:D(\mA^0)\to \mrm{L}^2(\Om^0)$ is self-adjoint and has compact resolvent. The dashed lines in Figure \ref{spectreHI} represent the approximation of the ten eigenvalues of smallest modulus of the limit operator $\mA^{0}$. The numerical experiments suggest that the spectrum of $\mA^{\delta}$ converges to the spectrum of $\mA^{0}$ as $\delta\to0$. Actually, this can be established. However, since the method is the same as the one we carry out in this paper, in a situation easier to handle, we have chosen not to present the proof.\\
\newline
\noindent\textsc{$\bullet$ Inside the critical interval (-1;-1/3)}\quad\\ 
In Figure \ref{spectreDI}, we display the ten eigenvalues of smallest modulus of Problem (\ref{Pb h}) with respect to $-\ln\delta$ for a contrast $\kappa_{\sigma}=\sigma_{-}/\sigma_{+}=-1+10^{-4}\in(-1;-1/3)$. We observe that the spectrum of $\mA^{\delta}$ depends on $\delta$ even for small $\delta$. In other words, it does not converge to the spectrum of some operator independent of $\delta$. The dashed lines correspond to the expected values of $\delta=\delta^n$ (see (\ref{sequence delta n})), computed explicitly using separation of variables, for which $\mA^{\delta}$ fails to be injective, or equivalently, for which zero belongs to the spectrum of $\mA^{\delta}$. Notice that the spectrum computed numerically indeed passes through zero for these values of $\delta$. Figures \ref{premierevpPosDI} and \ref{premierevpNegDI} represent respectively the approximation of the first positive and the first negative eigenvalue of $\mA^{\delta}$ with respect to $-\ln\delta$. Remark the periodic behaviour. This is consistent with what we proved in Theorem \ref{thmMajor}. Note that in the particular geometry considered here, one can check that Assumption \ref{assumption1} appearing in the statement of Theorem \ref{thmMajor} holds for all $\kappa_{\sigma}\in (-1;-1/3)$ by means of explicit computations using separation of variables.\\ 
Observe that we work here with a contrast very close to $-1$. This may seem surprising because for $\kappa_{\sigma}=-1$, the operators $\mA^{\delta}$ are not of Fredholm type, due to the presence of singularities all over the interface \cite[Thm.\,6.2]{BoCC12}. However, this allows us to obtain several periods in Figures \ref{premierevpPosDI} and \ref{premierevpNegDI} without being obliged to use a very refined mesh. Indeed, in Remark \ref{spectrumPeriodic}, we obtained that asymptotically, the eigenvalues are $\pi/\mu$-periodic in $\ln\delta-$scale, where $\mu$ is defined in (\ref{SingExp}). According to Remark \ref{PropertiesSingu}, we know that for $\kappa_{\sigma}\in(-1;-1/3)$ such that $\kappa_{\sigma}\to -1^{+}$, there holds $\mu\to+\infty$. In our case where $\kappa_{\sigma}=-1+10^{-4}$, the coefficient $|\pi/\mu|=|\pi^2/(2\,\mrm{acosh}(\frac{1-\kappa_{\sigma}}{2(1+\kappa_{\sigma})}))|$ (see (\ref{sequence delta n})) is approximately equal to $-0.5$. From a numerical point of view, it only requires to use meshes which are locally symmetric with respect to the interface to avoid instability phenomena (see \cite{ChCiAc}).\\
\newline
In Figure \ref{numSourceTerm}, we consider the source term problem 
\begin{equation}\label{Pb h source}
\begin{array}{|l}
\textrm{Find}\; u^{\delta}_h\in \mV^{\delta}_h\textrm{ such that}\\[5pt]
(\sigma^{\delta} \nabla u^{\delta}_h,\nabla v^{\delta}_h)_{\Om^{\delta}} = (f, v^{\delta}_h)_{\Om^{\delta}},\quad
\forall v^{\delta}_h\in\mV^{\delta}_h.
\end{array}
\end{equation}
We choose $f$ such that $f(x,y)=100$ if $x<-0.5$ and $f(x,y)=0$ if $x\ge-0.5$. Moreover, we impose $\sigma_+=1$ and $\sigma_+=-1+10^{-4}$. We display the variation of 
$\|u^{\delta}_h\|_{\mH_{0}^{1}(\Om^{\delta})}$ with respect to $1-\delta$. 
We observe peaks which correspond to the values $\delta=\delta^n$ for which $\mA^{\delta}$ fails to be injective. Here, we can do explicit computations to prove this result. For a general geometry where separation of variables does not work, we know from Theorem \ref{thmMajor} that a similar behaviour should be observed. Indeed, $\spec(\mA^{\delta})$ behaves asymptotically as   $\spec(\mathfrak{A}^{\delta})$ as $\delta$ goes to zero, and periodically in $\ln\delta$-scale, $\spec(\mathfrak{A}^{\delta})$ contains the value $0$. Notice that for small values of $\delta$, 
it is very expensive to use a mesh adapted to the geometry. Therefore,  the mesh size is chosen more or less constant with 
respect to $\delta$. This explains why peaks do not appear for small values of $\delta$. 

\begin{figure}[!ht]
\centering
\includegraphics[scale=0.42]{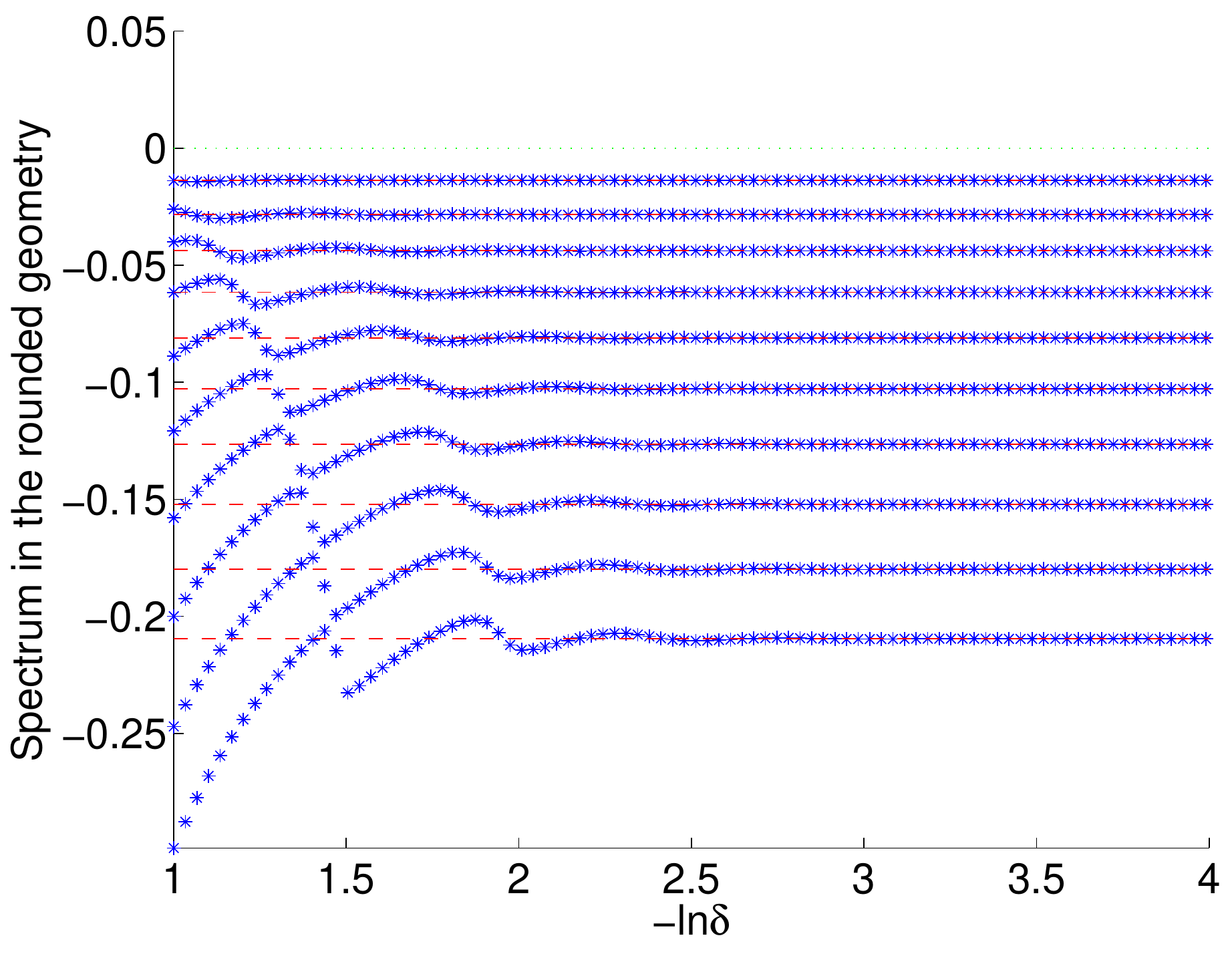}
\caption{For a given $\delta\in(0;1)$, we approximate the ten eigenvalues of smallest modulus of the operator $\mA^{\delta}$. Then, we make $\delta$ tend to zero. The figure represents the approximation of the spectrum of $\mA^{\delta}$ with respect to $-\ln\delta$. The horizontal dashed lines correspond to the approximation of the ten eigenvalues of smallest modulus of the limit operator $\mA^0$.}
\label{spectreHI}
\end{figure}

\begin{figure}[!ht]
\centering
\includegraphics[scale=0.42]{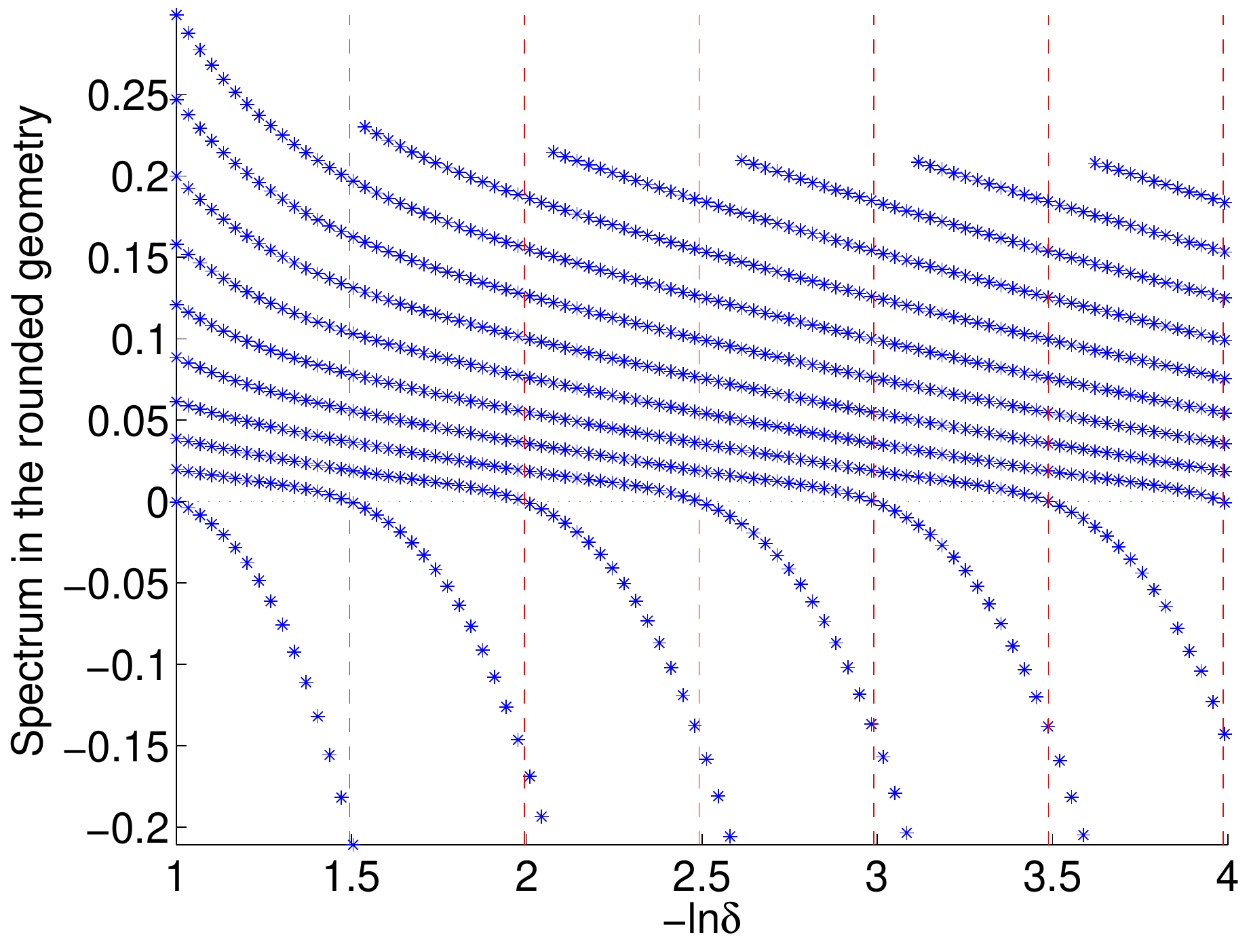}
\caption{For a given $\delta\in(0;1)$, we approximate the ten eigenvalues of smallest modulus of the operator $\mA^{\delta}$. Then, we make $\delta$ tend to zero. The figure represents the approximation of the spectrum of $\mA^{\delta}$ with respect to $-\ln\delta$. The vertical dashed lines correspond to the expected values of $\delta=\delta^n$ for which $\mA^{\delta}$ fails to be injective (see (\ref{sequence delta n})). }
\label{spectreDI}
\end{figure}

\begin{figure}[!ht]
\centering
\includegraphics[scale=0.42]{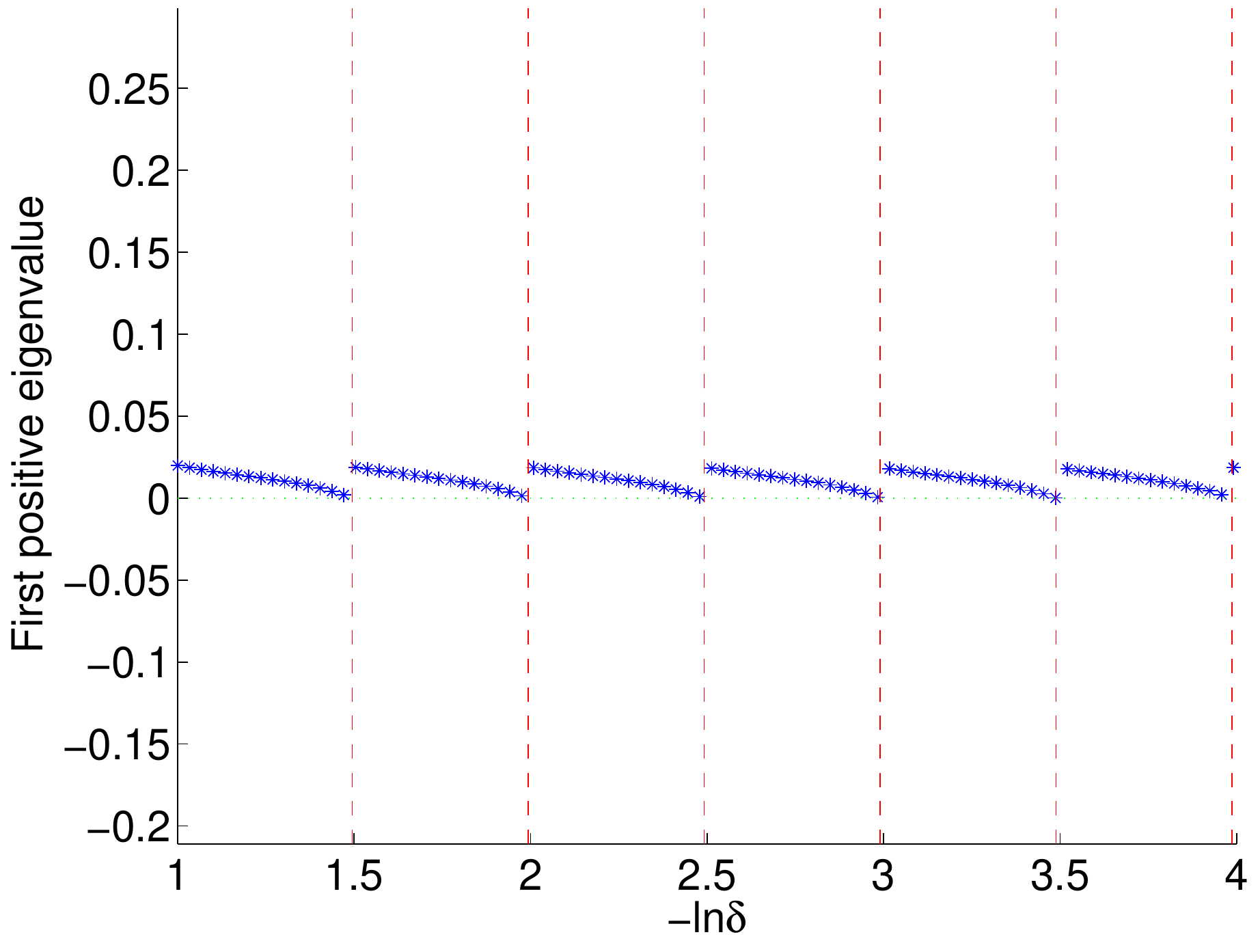}
\caption{Approximation of the first positive eigenvalue of $\mA^{\delta}$ with respect to $-\ln\delta$. The vertical dashed lines correspond to the expected values of $\delta=\delta^n$ for which $\mA^{\delta}$ fails to be injective (see (\ref{sequence delta n})).}
\label{premierevpPosDI}
\end{figure}

\begin{figure}[!ht]
\centering
\includegraphics[scale=0.42]{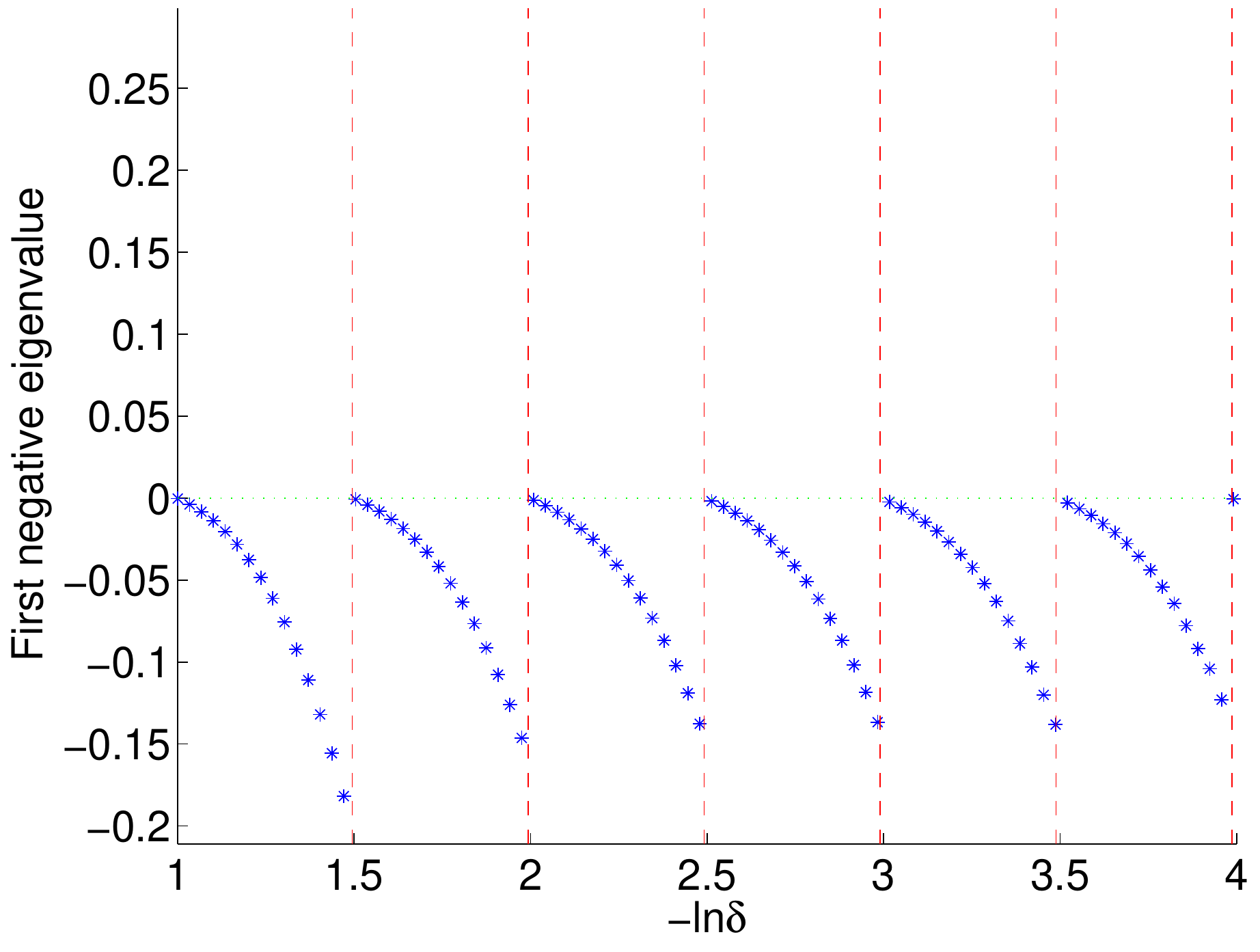}
\caption{Approximation of the first negative eigenvalue of $\mA^{\delta}$ with respect to $-\ln\delta$. The vertical dashed lines correspond to the expected values of $\delta=\delta^n$ for which $\mA^{\delta}$ fails to be injective (see (\ref{sequence delta n})).}
\label{premierevpNegDI}
\end{figure}

\begin{figure}[!ht]
\centering
\includegraphics[scale=0.42]{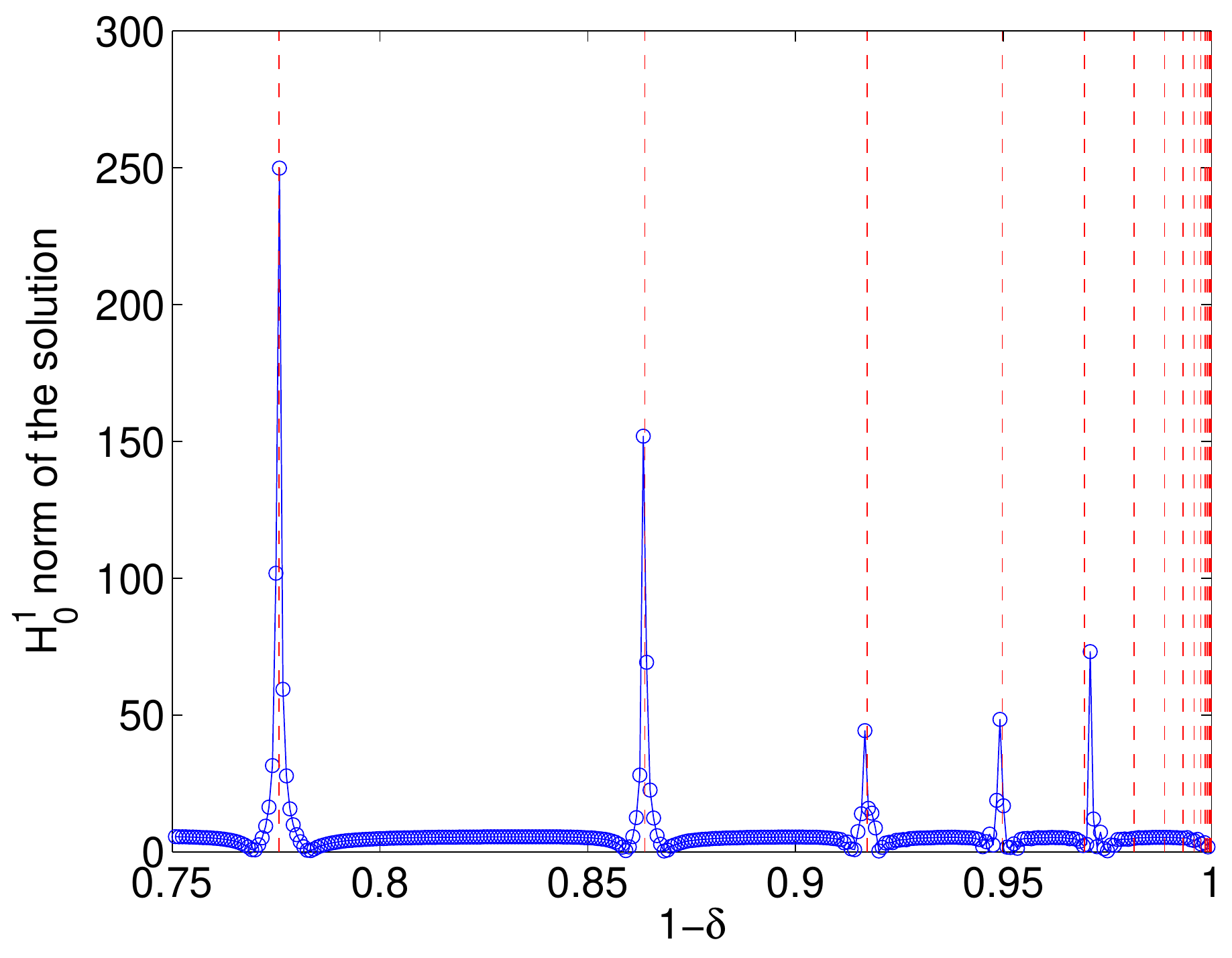}
\caption{Variation of $\|u^{\delta}_h\|_{\mH_{0}^{1}(\Om^{\delta})}$ with respect to $1-\delta$. The vertical dashed lines 
correspond to the expected values of $\delta=\delta^n$ for which $\mA^{\delta}$ fails to be injective (see (\ref{sequence delta n})).\label{numSourceTerm}}
\end{figure}

\newpage

\section*{Appendix}
\noindent In this appendix, first we briefly recall an elementary result of spectral theory that we used in this article in order to estimate the distance of a number to the spectrum of an operator. We provide a proof for the sake of completeness. Then, we use it to complete the demonstration of Proposition \ref{SpectralConvergence}.

\begin{lemma}\label{EstimEigenVal}
Let $\mrm{H}$, equipped with the inner product $(\cdot,\cdot)_{\mrm{H}}$ and the norm $\Vert \cdot\Vert_{\mrm{H}}$, be a Hilbert space. 
For any (\textit{a priori} unbounded) normal linear operator $\mrm{A}: D(\mrm{A})\subset \mrm{H}
\to\mrm{H}$ we have 
$$
\inf_{\lambda\in \mathfrak{S}(\mrm{A})}\vert \lambda - \mu\vert\leq \inf_{v\in D(\mA)\setminus\{0\}}
\frac{\Vert \mA v-\mu v \Vert_{\mrm{H}}}{\Vert v\Vert_{\mrm{H}}},\qquad\forall \mu\in \mathbb{C}.
$$
\end{lemma}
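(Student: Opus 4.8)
The plan is to reduce everything to the classical identity, valid for any normal operator, that the norm of the resolvent is the reciprocal of the distance from the base point to the spectrum. First I would dispose of the trivial case: if $\mu\in\mathfrak{S}(\mrm{A})$, then $\inf_{\lambda\in\mathfrak{S}(\mrm{A})}\vert\lambda-\mu\vert=0$ and the inequality holds vacuously. So assume from now on that $\mu$ lies in the resolvent set of $\mrm{A}$, so that $\mrm{R}:=(\mrm{A}-\mu\,\mrm{Id})^{-1}$ is a bounded linear operator on $\mrm{H}$. Since $\mrm{A}$ is normal, $\mrm{A}-\mu\,\mrm{Id}$ is normal and closed, and hence its bounded inverse $\mrm{R}$ is again normal.

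Next I would compute $\Vert\mrm{R}\Vert$. For a bounded normal operator the operator norm coincides with the spectral radius, so $\Vert\mrm{R}\Vert=\sup\{\,\vert z\vert\ :\ z\in\mathfrak{S}(\mrm{R})\,\}$. By the spectral mapping theorem applied to the resolvent of a closed operator, $\mathfrak{S}(\mrm{R})\setminus\{0\}=\{(\lambda-\mu)^{-1}\ :\ \lambda\in\mathfrak{S}(\mrm{A})\}$, the possible extra point $0$ not affecting the supremum of moduli. Hence $\Vert\mrm{R}\Vert=\sup_{\lambda\in\mathfrak{S}(\mrm{A})}\vert\lambda-\mu\vert^{-1}=\big(\inf_{\lambda\in\mathfrak{S}(\mrm{A})}\vert\lambda-\mu\vert\big)^{-1}$. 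All of these are standard facts of spectral theory and can simply be quoted (see e.g. \cite{Kato95}).

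Finally, for any $v\in D(\mrm{A})\setminus\{0\}$ I would write $v=\mrm{R}(\mrm{A}v-\mu v)$, which gives $\Vert v\Vert_{\mrm{H}}\le\Vert\mrm{R}\Vert\,\Vert\mrm{A}v-\mu v\Vert_{\mrm{H}}$, i.e. $\Vert\mrm{A}v-\mu v\Vert_{\mrm{H}}/\Vert v\Vert_{\mrm{H}}\ge\Vert\mrm{R}\Vert^{-1}=\inf_{\lambda\in\mathfrak{S}(\mrm{A})}\vert\lambda-\mu\vert$. Taking the infimum over all such $v$ yields the announced inequality.

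The only genuinely delicate ingredient is the spectral-theoretic input used in the second paragraph — the equality $\Vert\mrm{R}\Vert=r(\mrm{R})$ for a normal operator together with the description of $\mathfrak{S}(\mrm{R})$ through the spectral mapping theorem for the resolvent — but both are classical and require no argument here; everything else is a one-line manipulation.
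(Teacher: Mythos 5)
Your argument is correct, but it takes a different route from the paper. The paper works directly with the spectral measure: writing $\mA-\mu\,\mrm{Id}=\int_{\mathfrak{S}(\mA)}(\zeta-\mu)\,d\mrm{E}(\zeta)$, it bounds $\Vert \mA v-\mu v\Vert_{\mrm{H}}^{2}=\int_{\mathfrak{S}(\mA)}\vert\zeta-\mu\vert^{2}d\mrm{E}_{v,v}(\zeta)$ from below by $\inf_{\lambda\in\mathfrak{S}(\mA)}\vert\lambda-\mu\vert^{2}\,\Vert v\Vert_{\mrm{H}}^{2}$, which gives the inequality in one stroke and without distinguishing whether $\mu$ lies in the spectrum. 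You instead split off the trivial case $\mu\in\mathfrak{S}(\mA)$ and, for $\mu$ in the resolvent set, invoke the classical identity $\Vert(\mA-\mu\,\mrm{Id})^{-1}\Vert=\big(\inf_{\lambda\in\mathfrak{S}(\mA)}\vert\lambda-\mu\vert\big)^{-1}$, obtained from normality of the resolvent, the equality of norm and spectral radius for bounded normal operators, and the spectral mapping theorem for the resolvent; the conclusion then follows from $v=(\mA-\mu\,\mrm{Id})^{-1}(\mA v-\mu v)$. Both proofs ultimately rest on the spectral theorem for (possibly unbounded) normal operators — your resolvent identity is usually derived from it — so neither is more elementary; the paper's version is more self-contained and uniform, while yours has the merit of isolating the reusable fact that for normal operators the resolvent norm equals the reciprocal of the distance to the spectrum, at the cost of quoting two standard results and checking that the inverse of a normal operator is normal. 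One implicit point common to both arguments (harmless here) is that the spectrum of a normal operator on a nontrivial Hilbert space is nonempty, without which the left-hand infimum would be over the empty set.
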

\begin{proof}
Since $\mA$ is normal then, according to the spectral theorem \cite[Thm.\,6.6.1]{BiSo87}, it admits a spectral 
decomposition $\mA = \int_{\mathfrak{S}(\mA)}\zeta d\mrm{E}(\zeta)$ where $\mrm{E}(\zeta)$ refers to a spectral 
measure on $\mrm{H}$. Let $d\mrm{E}_{v,v}$ refer to the measure associated with $\zeta\mapsto 
(\mrm{E}(\zeta)v,v)_{\mrm{H}}$. A spectral decomposition of  $\mA-\mu\mrm{Id}$ is given by
$\mA-\mu\mrm{Id} = \int_{\mathfrak{S}(\mA)}(\zeta - \mu) d\mrm{E}(\zeta)$. Moreover the formula 
$\Vert \mA v-\mu v\Vert_{\mrm{H}}^{2} = \int_{\mathfrak{S}(\mA)}\vert\zeta - \mu\vert^{2} d\mrm{E}_{v,v}(\zeta)$
holds for any $v\in D(\mA)$. As a consequence, we have  
$$
\Vert v\Vert_{\mrm{H}}^{2}\inf_{\lambda\in \mathfrak{S}(\mrm{A})}\vert \lambda - \mu\vert^{2} = 
\inf_{\lambda\in \mathfrak{S}(\mrm{A})}\vert \lambda - \mu\vert^{2}\int_{\mathfrak{S}(\mA)}d\mrm{E}_{v,v}(\zeta)\leq 
\int_{\mathfrak{S}(\mA)}\vert \zeta - \mu\vert^{2}d\mrm{E}_{v,v}(\zeta) = \Vert \mA v-\mu v\Vert_{\mrm{H}}^{2}.
$$
Since this holds for any $v\in D(\mA)$, we can divide by $\Vert v\Vert_{\mrm{H}}^{2}$ and take the $\inf$ in the 
right hand side of the estimate above, which yields the desired inequality.
\end{proof}

\noindent\textbf{Proof of Proposition \ref{SpectralConvergence}.} First, let us show that 
\begin{equation}\label{distanceSpectraProof}
\sup_{\eta\in \mathfrak{S}(\mathfrak{A}^{\delta})}\mathop{\inf\phantom{p}}_{\lambda\in \mathfrak{S}(\mA^{\delta})}\Big\vert \frac{1}{\lambda+i}-\frac{1}{\eta+i}\Big\vert\;\leq\; 
C_{\eps}\;\delta^{1-\eps},\quad\forall \delta\in(0;1].
\end{equation}
Pick some $\eta\in \mathfrak{S}(\mathfrak{A}^{\delta})$. Clearly, we have 
\[
\mathop{\inf\phantom{p}}_{\lambda\in \mathfrak{S}(\mA^{\delta})}\Big\vert \frac{1}{\lambda+i}-\frac{1}{\eta+i}\Big\vert = \mathop{\inf\phantom{p}}_{\tilde{\lambda}\in \mathfrak{S}((\mA^{\delta}+i\mrm{Id})^{-1})}\Big\vert \tilde{\lambda}-\frac{1}{\eta+i}\Big\vert. 
\]
Note that $(\mA^{\delta}+i\mrm{Id})^{-1}$ is a normal operator such that $D((\mA^{\delta}+i\mrm{Id})^{-1})=\mrm{L}^2(\Om)$. As a consequence, according to Lemma \ref{EstimEigenVal} above, we deduce 
\begin{equation}\label{eqIntermSpectral}
\mathop{\inf\phantom{p}}_{\tilde{\lambda}\in \mathfrak{S}((\mA^{\delta}+i\mrm{Id})^{-1})}\Big\vert \tilde{\lambda}-\frac{1}{\eta+i}\Big\vert\;\leq\;\inf_{v\in \mrm{L}^2(\Om)\setminus\{0\}}
\frac{\Vert (\mA^{\delta}+i\mrm{Id})^{-1}v-(\eta+i)^{-1} v \Vert_{\mrm{L}^2(\Om)}}{\Vert v\Vert_{\mrm{L}^2(\Om)}}.
\end{equation}
Since $\eta\in \mathfrak{S}(\mathfrak{A}^{\delta})$, there is some $v\in\mrm{L}^2(\Om)\setminus\{0\}$ such that $(\mathfrak{A}^{\delta}+i\mrm{Id})^{-1}v=(\eta+i)^{-1} v$. From (\ref{eqIntermSpectral}) and (\ref{error estimate result L2}), we infer 
\[
\mathop{\inf\phantom{p}}_{\tilde{\lambda}\in \mathfrak{S}((\mA^{\delta}+i\mrm{Id})^{-1})}\Big\vert \tilde{\lambda}-\frac{1}{\eta+i}\Big\vert\;\leq\;
\frac{\Vert (\mA^{\delta}+i\mrm{Id})^{-1}v-(\mathfrak{A}^{\delta}+i\mrm{Id})^{-1}v \Vert_{\mrm{L}^2(\Om)}}{\Vert v\Vert_{\mrm{L}^2(\Om)}} \;\leq \;C_{\eps}\,\delta^{1-\eps}.
\]
Taking the supremum over all $\eta\in \mathfrak{S}(\mathfrak{A}^{\delta})$, we obtain (\ref{distanceSpectraProof}). Since the roles of 
$\mA^{\delta}$ and $\mathfrak{A}^{\delta}$ are symmetric,  (\ref{distanceSpectra}) is proved.\hfill$\square$

\section*{Acknowledgments} 
The research of L. C. and X. C. is supported by the ANR project METAMATH, grant ANR-11-MONU-016 of the French Agence Nationale de la Recherche. The research of L. C. is supported by the FMJH through the grant ANR-10-CAMP-0151-02 in the ``Programme des Investissements
d'Avenir''. The research of S.A. N. is supported by the Russian Foundation for Basic Research, grant No. 15-01-02175.

\bibliography{Bibli}
\bibliographystyle{plain}

\end{document}